\newtheorem{thm}{Th\'eor\`eme}[section]
\newtheorem{prop}[thm]{Proposition}
\newtheorem{lemma}[thm]{Lemme}
\newtheorem{cor}[thm]{Corollaire}
\newtheorem{conj}[thm]{Conjecture}
\theoremstyle{definition}
\theoremstyle{remark}
\newtheorem{rmk}[thm]{Remarque}
\numberwithin{equation}{section}
\newcommand{\Q}{\mathbb Q}
\newcommand{\F}{\mathbb F}
\newcommand{\C}{\mathbb C}
\newcommand{\Z}{\mathbb Z}
\newcommand{\G}{\mathbb G}
\newcommand{\g}{\mathfrak g}
\newcommand{\h}{\mathfrak h}
\renewcommand{\P}{\mathbb P}
\newcommand{\Spec}{\operatorname{Spec}}
\newcommand{\mc}[1]{\mathcal{#1}}
\newcommand{\cl}{\overline}
\newcommand{\set}[1]{\left\{#1\right\}}
\renewcommand{\phi}{\varphi}
\newcommand{\on}[1]{\operatorname{#1}}
\title[Produit d'une courbe et d'une surface]{Sur la conjecture de Tate enti\`ere pour le produit d'une courbe et d'une surface $CH_{0}$-triviale  sur un corps fini}
\author{Jean-Louis Colliot-Th\'el\`ene et Federico Scavia}
\address{Universit\'e Paris-Saclay, CNRS, Laboratoire de math\'ematiques d'Orsay, 91405, Orsay, France.}
\email{jlct@math.u-psud.fr}
\address{Department of Mathematics\\
	University of British Columbia\\
	Vancouver, BC V6T 1Z2\\Canada}
\email{scavia@math.ubc.ca}
\date{20 juillet 2021}
\subjclass[2010]{14C25; 14C35, 14G15}
\begin{document}
	\maketitle
	\hypersetup{backref=true}
	
	\selectlanguage{english}	
	\begin{abstract}
		We investigate 
		a strong version of
		the integral Tate conjecture for 1-cycles on the product
		of a curve and a surface over a finite field,
		under the assumption that the surface is geometrically $CH_0$-trivial.
		By this we mean that over any algebraically closed field extension, the degree map on the
		zero-dimensional Chow group of the surface is an isomorphism. This applies
		to Enriques surfaces. When the N\'eron-Severi group has no torsion,
		we recover earlier results of A. Pirutka. 
		The results rely on a detailed study of the third unramified cohomology group of specific products of varieties.
	\end{abstract}

	\selectlanguage{french}
	\begin{abstract}
		Nous \'etudions une forme forte de 	la conjecture  de Tate enti\`ere pour les 1-cycles sur le produit
		d'une courbe et d'une surface sur un corps fini, sous l'hypoth\`ese que la surface est g\'eom\'etriquement 
		$ CH_0 $-triviale.
		Nous entendons par cela que, sur toute extension de 
		corps
		alg\'ebriquement clos, la fl\`eche degr\'e sur le groupe de Chow de dimension z\'ero de la surface est un isomorphisme. Cela s'applique aux surfaces d'Enriques. Lorsque le groupe de N\'eron-Severi n'a pas de torsion, nous 
		retrouvons
		des r\'esultats ant\'erieurs de A. Pirutka. 
		Le travail implique
		l'\'etude d\'etaill\'ee du troisi\`eme groupe de cohomologie non ramifi\'ee
		pour les produits de vari\'et\'es consid\'er\'es.
	\end{abstract}
	
	\section{Introduction}
	Soient $\F$ un corps fini de caract\'eristique $p$, $\cl{\F}$ une cl\^oture alg\'ebrique de $\F$, et soit $G$ le groupe de Galois absolu $\on{Gal}(\cl{\F}/\F)$ et $\ell\neq p$ un nombre premier. Soit $X$ une $\F$-vari\'et\'e projective, lisse et g\'eom\'etriquement connexe, de dimension $d$, et soit $\cl{X}:=X\times_{\F}\cl{\F}$. Si $i\geq 0$ est un entier et $\ell\neq p$ est un nombre premier, la conjecture de Tate pour les cycles de codimension $i$ en cohomologie $\ell$-adique pr\'edit que les applications cycle
	\begin{equation}\label{tate1}
		CH^i(X)	\otimes_{\Z}{\Q_{\ell}}\to H^{2i}(\cl{X},\Q_{\ell}(i))^G, 
	\end{equation}
	\begin{equation}\label{tate2}
		CH^i(\cl{X})\otimes_{\Z}{\Q_{\ell}}\to H^{2i}(\cl{X},\Q_{\ell}(i))^{(1)} 
	\end{equation}
	et
	\begin{equation}\label{tate3}
		CH^i(X)\otimes_{\Z}{\Q_{\ell}}\to H^{2i}(X,\Q_{\ell}(i))	
	\end{equation}
	sont surjectives. Dans (\ref{tate2}), si $M$ est un $G$-module, on note par $M^{(1)}\subset M$ le sous-groupe form\'e des \'el\'ements dont le stabilisateur est un sous-groupe ouvert de $G$. En 
	fait
	ces trois versions de la conjecture de Tate pour toute extension finie de $\F$ sont \'equivalentes entre eux: l'\'equivalence entre la surjectivit\'e de (\ref{tate1})
	et de (\ref{tate2}) suit par un argument de restriction-corestriction, et celle entre la surjectivit\'e de (\ref{tate1}) et de (\ref{tate3}) utilise les conjectures de Weil. 
	
	On s'int\'eresse ici aux variantes enti\`eres de la conjecture de Tate, obtenues en rempla\c{c}ant partout $\Q_{\ell}$ par $\Z_{\ell}$. On ne s'attend pas \`a ce que ces variantes soient vraies en toute g\'en\'eralit\'e, mais on a 
	des raisons 
	d'esp\'erer dans le cas $i=d-1$, c'est \`a dire pour les $1$-cycles; voir \cite[\S 2]{ctszamuely}. Les questions d'int\'er\^et sont donc les suivantes: Est-ce que les applications 
	\begin{equation}\label{tate-int1}\tag{1.1'}
		CH^{d-1}(X)	\otimes_{\Z}{\Z_{\ell}}\to H^{2d-2}(\cl{X},\Z_{\ell}(d-1))^G,	
	\end{equation}
	\begin{equation}\label{tate-int2}\tag{1.2'}
		CH^{d-1}(\cl{X})\otimes_{\Z}{\Z_{\ell}}\to H^{2d-2}(\cl{X},\Z_{\ell}(d-1))^{(1)}	
	\end{equation}
	ou 
	\begin{equation}\label{tate-int3}\tag{1.3'}
		CH^{d-1}(X)\otimes_{\Z}{\Z_{\ell}}\to H^{2d-2}(X,\Z_{\ell}(d-1))	
	\end{equation}
	sont surjectives? On ne sait pas si (\ref{tate-int1}), (\ref{tate-int2}) et (\ref{tate-int3}) sont surjectives en g\'en\'eral, ou m\^eme si leurs surjectivit\'es sont \'equivalentes entre elles. 
	
	La surjectivit\'e de  (\ref{tate-int3}) pour $d=2$ \'equivaut \`a la conjecture de Tate initiale (surjectivit\'e de (\ref{tate3}))
	pour $d=2$ et $i=1$.
	Comme l'on voit \`a  la proposition \ref{dim3suffit}, pour \'etablir
	la surjectivit\'e de
	(\ref{tate-int3}) pour tout $d\geq 3$ (\Cref{cycle0} ci-dessous), il suffit de consid\'erer le cas $d=3$.  	
	
	Par ailleurs
	un th\'eor\`eme  bien connu de Schoen  (\Cref{schoen}) affirme que si la conjecture de Tate vaut pour les diviseurs sur les surfaces sur les corps finis (donc $d=2$ et $i=1$), alors l'application  (\ref{tate-int2}) est surjective pour tout $d$.

	Pour un solide (c'est-\`a-dire, une vari\'et\'e de dimension trois) projectif et lisse sur $\C$, la conjecture de Hodge enti\`ere  pour les $1$-cycles ne vaut pas; voir les exemples rappel\'es dans \cite{ctvoisin}. Elle est m\^eme n\'egative pour des solides \`a la g\'eom\'etrie relativement simple, comme le produit d'une courbe elliptique et d'une surface d'Enriques ``tr\`es g\'en\'erales'', comme l'on montr\'e r\'ecemment Benoist et Ottem  \cite{benoist2018failure}, voir aussi \cite{colliot2019cohomologie}. La situation sur les corps finis est donc (conjecturalement) tr\`es diff\'erente de celle sur $\C$. En effet, les contre-exemples de \cite{benoist2018failure} sont obtenus par une m\'ethode de sp\'ecialisation, qui n'est pas reproductible sur un corps fini. C'est aussi les cas pour les contre-exemples de Koll\'ar \cite[\S 5.3]{ctvoisin}: sous l'hypoth\`ese que la conjecture de Tate sur les diviseurs sur les surfaces sur un corps fini vaut, le th\'eor\`eme de Schoen implique qu'il n'existe pas d'exemple \`a la Koll\'ar sur $\F$ (ni m\^eme sur $\cl{\F}$); voir \cite[\S 6]{ctszamuely}.

	Il est donc naturel d'\'etudier la surjectivit\'e de (\ref{tate-int1}), (\ref{tate-int2}) et (\ref{tate-int3}) pour les produits d'une courbe elliptique et d'une surface d'Enriques sur $\F$. Nos r\'esultats principaux ne concernent que la surjectivit\'e de (\ref{tate-int3}), 
	qui n'est connue que dans tr\`es peu de cas.
	Le th\'eor\`eme de Schoen ne dit rien sur (\ref{tate-int3}), m\^eme sous la conjecture de Tate pour les diviseurs sur les surfaces. 
	Par ailleurs la surjectivit\'e de (\ref{tate-int3}) est un \'enonc\'e plus fort que  la surjectivit\'e de (\ref{tate-int1}); voir la remarque \ref{tateclassique} (ii).

	Pour $X/\F$ de dimension $d=3,$ notre point de d\'epart est le lien entre 
	le conoyau de l'application 
	(\ref{tate-int3}) et le groupe de cohomologie non-ramifi\'ee $H^3_{\on{nr}}(\F(X)/\F,\Q_{\ell}/\Z_{\ell}(2))$, cas particulier d'un \'enonc\'e pour les cycles de codimension 2
	\'etabli  par B. Kahn \cite{kahn2012classes} et
	par B. Kahn et le premier auteur  \cite{colliot2013cycles}. 
	
	Soient $k$ un corps  d'exposant caract\'eristique $p$ et $X$ une $k$-vari\'et\'e projective, lisse, et g\'eom\'etriquement connexe et soit $k(X)$ le corps des fonctions rationnelles de $X$.
	Pour chaque nombre premier $\ell\neq p$ et chaque entier $n\geq 1$, on a un groupe de cohomologie non-ramifi\'ee  \cite[Thm. 4.1.1]{colliot1995birational}
	$$H^n_{\on{nr}}(k(X)/k,\Q_{\ell}/\Z_{\ell}(n-1))
	\subset   H^n(k(X),\Q_{\ell}/\Z_{\ell}(n-1))$$ qui est un invariant  $k$-birationnel (et m\^{e}me stablement $k$-birationnel) de $X$.	
	Pour $n=2$, c'est la partie $\ell$-primaire du groupe de Brauer de $X$.
	Pour $n=3$, ce groupe intervient dans des questions de rationalit\'e, de descente galoisienne des classes de cycles de codimension $2$ modulo \'equivalence rationnelle, et aussi dans  l'\'etude des z\'ero-cycles des vari\'et\'es d\'efinies sur les corps globaux; voir \cite{colliot2013cycles}.  
	
	Dans le cas o\`u $k=\F$ est un corps fini, on sait que $H^3_{\on{nr}}(\F(X)/\F,\Q_{\ell}/\Z_{\ell}(2))$ est nul si $\dim X\leq 2$: c'est  trivial  si $\dim (X)=1$, et a \'et\'e \'etabli dans \cite[Remarque 2, p. 790]{colliot1983torsion} par Sansuc et Soul\'e et le premier auteur si $\dim (X)=2$, et aussi   par Kato \cite[Thm. 0.7, Corollaire]{katoclf}.
	
	Dans \cite{pirutka2011groupe},  Pirutka a construit des exemples de $\F$-vari\'et\'es projectives, lisses et g\'eom\'etri\-que\-ment rationnelles $X$ de  dimension $5$, et donc aussi de toute dimension $\geq 5$, avec  $H^3_{\on{nr}}(\F(X)/\F,\Q_{\ell}/\Z_{\ell}(2))\neq 0$.  Pour $\on{dim}(X)=3$ et pour $\on{dim}(X)=4$,  on ne sait pas s'il existe des vari\'et\'es projectives et lisses avec  $H^3_{\on{nr}}(\F(X)/\F,\Q_{\ell}/\Z_{\ell}(2))\neq 0$. 
	
	\medskip
	
	Dans \cite[Question 5.4]{colliot2013cycles}, Kahn et le premier auteur ont demand\'e: Est-ce que
	\begin{equation}\label{nr-trivial}
		H^3_{\on{nr}}(\F(X)/\F,\Q_{\ell}/\Z_{\ell}(2))=0	
	\end{equation}
	pour tout solide $X$ sur $\F$ et tout $\ell\neq p$? 
	Comme on explique au \S \ref{cyclesTate} (\Cref{coktatefini}  et \Cref{KCTK}),
	si la r\'eponse  \`a cette question est affirmative,  
	et si la conjecture de Tate vaut pour les diviseurs 
	sur
	le solide
	$X$,
	alors (\ref{tate-int3}) est surjective. 
	 . 
	
	Kahn et le premier auteur  ont conjectur\'e que la r\'eponse \`a (\ref{nr-trivial}) est affirmative si le solide $X$ est g\'eom\'etriquement unir\'egl\'e; voir \cite[Conjecture 5.7]{colliot2013cycles}.  L'annulation du groupe	 $H^3_{\on{nr}}(\F(X)/\F,\Q_{\ell}/\Z_{\ell}(2))$
	a \'et\'e \'etablie par  Parimala et Suresh \cite{parimala2016degree} si $X$ est
	un fibr\'e en coniques au dessus d'une surface. 
	Elle a aussi \'et\'e \'etablie  par  Pirutka \cite[Th\'eor\`eme 1.1]{pirutka2016cohomologie} lorsque le solide $X$
	est le produit d'une courbe $C$ et d'une surface $S$ g\'eom\'etriquement $CH_{0}$-triviale
	telle que $H^1(S,\mc{O}_S)=0$, sous l'hypoth\`ese suppl\'ementaire que le groupe de N\'eron-Severi g\'eom\'etrique
	$ {\rm NS}(\cl{S})$ 
	de $S$
	n'a pas de torsion, ce qui est le cas par exemple lorsque $S$ est une surface g\'eom\'etriquement rationnelle. On dit qu'une surface $S$ est \emph{g\'eom\'etriquement $CH_{0}$-triviale} si pour toute extension alg\'e\-bri\-quement close $\Omega$ de $\F$, le degr\'e \[CH_{0}(S_{\Omega})\otimes\Q \to \Q\] est un isomorphisme. On donne ci-dessous des rappels sur cette hypoth\`ese, avec ses cons\'equences remarquables. 
	Les surfaces g\'eom\'etriquement $CH_{0}$-triviales font l'objet de nombreuses investigations,	
	voir par exemple \cite{gorchinskiy2013geometric}.
	Pour $p\neq 2$, toute surface d'Enriques est g\'eom\'etriquement $CH_{0}$-triviale.

	Nous arrivons maintenant \`a nos r\'esultats principaux, qui soutiennent une r\'eponse affirmative \`a (\ref{tate-int3}) et (\ref{nr-trivial}) dans certains cas particuliers. Comme on a d\'ej\`a mentionn\'e, par la proposition \ref{dim3suffit} le cas plus int\'eressant de (\ref{tate-int3}) est $d=3$. Motiv\'es par \cite{benoist2018failure} et \cite{pirutka2016cohomologie}, nous consid\'erons des solides $X$ de la forme suivante. Soient $C$ et $S$ deux vari\'et\'es g\'eom\'etriquement connexes, projectives et lisses sur $\F$, de dimension $1$ et $2$ respectivement, $J(C)$ la jacobienne de la courbe $C$, et $X:=C\times_{\F} S$. 	
	
	Nous \'etablissons le th\'eor\`eme g\'en\'eral suivant.

	\begin{thm}\label{mainthm1}
		On suppose que la surface $S$ est g\'eom\'etriquement $CH_{0}$-triviale et que $\ell$ ne divise pas l'ordre de $\on{NS}(\cl{S})_{\on{tors}}$. Alors $H^3_{\on{nr}}(\F(X),\Q_{\ell}/\Z_{\ell}(2))=0$, et l'application cycle
		$$ CH^2(X) \otimes \Z_{\ell} \to H^4(X,\Z_{\ell}(2))$$ est surjective.	
	\end{thm}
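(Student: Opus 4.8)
The second assertion will be deduced from the first. Since $S$ is geometrically $CH_{0}$-trivial, a Bloch--Srinivas decomposition of the diagonal of $S$ (over an algebraic closure of $\F(S)$) gives $\on{Pic}^{0}(\cl S)=0$, $b_{1}(\cl S)=b_{3}(\cl S)=0$, and vanishing of the transcendental part of $H^{2}(\cl S,\Q_{\ell}(1))$, whence $H^{2}(\cl S,\Q_{\ell}(1))=\on{NS}(\cl S)\otimes\Q_{\ell}$. By the Künneth formula $H^{2}(\cl X,\Q_{\ell}(1))=\Q_{\ell}\oplus\bigl(\on{NS}(\cl S)\otimes\Q_{\ell}\bigr)$ is then generated by classes of divisors on $\cl X$; using $\on{Br}(\F)=0$ and $H^{1}(G,J(C)(\cl\F))=0$, a standard restriction argument shows that the Tate conjecture for divisors holds on $X$. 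Granting $H^{3}_{\on{nr}}(\F(X),\Q_{\ell}/\Z_{\ell}(2))=0$, \Cref{coktatefini} and \Cref{KCTK} then yield the surjectivity of $CH^{2}(X)\otimes\Z_{\ell}\to H^{4}(X,\Z_{\ell}(2))$. So the whole theorem reduces to proving $H^{3}_{\on{nr}}(\F(X),\Q_{\ell}/\Z_{\ell}(2))=0$.

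I would begin by isolating the consequences of the two hypotheses on $S$, which are the only properties of $S$ that the argument uses. The rational decomposition of the diagonal gives the cohomological vanishings above; combining these with $\ell\nmid|\on{NS}(\cl S)_{\on{tors}}|$ and with the recollections on geometrically $CH_{0}$-trivial surfaces, one checks that $H^{i}(\cl S,\Z_{\ell}(j))$ is torsion-free for all $i,j$ and concentrated in degrees $i\in\{0,2,4\}$, with $H^{0}=\Z_{\ell}$, $H^{2}(\cl S,\Z_{\ell}(1))=\on{NS}(\cl S)\otimes\Z_{\ell}$ and $H^{4}(\cl S,\Z_{\ell}(2))=\Z_{\ell}$. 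In particular the $\ell$-primary Brauer group $\on{Br}(\cl S)\{\ell\}$ vanishes and $H^{1}(\cl S,\Q_{\ell}/\Z_{\ell})=0$; moreover, $S$ being a surface over a finite field, $H^{3}_{\on{nr}}(\F(S),\Q_{\ell}/\Z_{\ell}(2))=0$ by the results recalled in the introduction.

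The heart of the proof is then the study of $H^{3}_{\on{nr}}$ of the product $X=C\times S$ foreshadowed in the abstract. Combining the Bloch--Ogus coniveau spectral sequence on $X$ with the one on $\cl X$, with the Künneth decomposition of the étale cohomology of $\cl X=\cl C\times\cl S$, and with the Hochschild--Serre spectral sequence for $G$ (of cohomological dimension $1$), I would establish an exact sequence placing $H^{3}_{\on{nr}}(\F(X),\Q_{\ell}/\Z_{\ell}(2))$ between $H^{1}(G,\on{Br}(\cl X)\{\ell\})$ and $H^{3}_{\on{nr}}(\cl\F(\cl X),\Q_{\ell}/\Z_{\ell}(2))^{G}$, up to a contribution pulled back from $H^{3}_{\on{nr}}(\F(S),\Q_{\ell}/\Z_{\ell}(2))$ which vanishes since $\dim S=2$. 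The group $\on{Br}(\cl X)\{\ell\}$ is computed by Künneth out of $\on{Br}(\cl C)\{\ell\}=0$, $\on{Br}(\cl S)\{\ell\}=0$, $\on{Pic}^{0}(\cl S)=0$, $\on{NS}(\cl S)_{\on{tors}}\{\ell\}=0$ and $H^{1}(\cl S,\Q_{\ell}/\Z_{\ell})=0$, hence it vanishes; and $H^{3}_{\on{nr}}(\cl\F(\cl X),\Q_{\ell}/\Z_{\ell}(2))$ is computed by a Künneth-type analysis of the unramified cohomology of the product of the curve $\cl C$ and the surface $\cl S$ over $\cl\F$, whose only a priori nonzero term is $H^{1}_{\on{nr}}(\cl\F(\cl C),\Q_{\ell}/\Z_{\ell}(1))\otimes\on{Br}(\cl S)\{\ell\}\cong J(C)[\ell^{\infty}]\otimes\on{Br}(\cl S)\{\ell\}$, which vanishes because $\on{Br}(\cl S)\{\ell\}=0$; the other terms vanish for dimension reasons (Tsen's theorem for $\cl\F(\cl C)$, cohomological dimension $2$ for $\cl\F(\cl S)$). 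Thus the curve $C$ and its Jacobian $J(C)$ enter the computation only through $H^{1}_{\on{nr}}(\cl\F(\cl C),-)=J(C)[\ell^{\infty}]$; one checks in parallel that $H^{3}(\cl X,\Z_{\ell}(2))\cong T_{\ell}J(C)\otimes\on{NS}(\cl S)$ is torsion-free and of coniveau $\geq 1$ in $\cl X$ — being supported on the divisors $\cl C\times\cl D_{i}$, with $\cl D_{i}$ curves whose classes span $\on{NS}(\cl S)$ — so it produces no further contribution through $H^{1}(G,-)$. Feeding the vanishings of the second paragraph into the exact sequence gives $H^{3}_{\on{nr}}(\F(X),\Q_{\ell}/\Z_{\ell}(2))=0$; in the case where $\on{NS}(\cl S)$ is torsion-free this recovers Pirutka's theorem \cite{pirutka2016cohomologie}.

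The step I expect to be the main obstacle is the derivation of this exact sequence, and within it the bookkeeping of torsion: one must control the differentials of the coniveau spectral sequence of $\cl X$ and their interaction with Hochschild--Serre, and check that the only torsion that could contribute a nonzero unramified class in degree $3$ is an $\ell$-part of $\on{NS}(\cl S)_{\on{tors}}$ — equivalently, that the single hypothesis $\ell\nmid|\on{NS}(\cl S)_{\on{tors}}|$ simultaneously forces $\on{Br}(\cl S)\{\ell\}=0$, $H^{1}(\cl S,\Q_{\ell}/\Z_{\ell})=0$ and torsion-freeness of $H^{\ast}(\cl S,\Z_{\ell}(\bullet))$. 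This is exactly the additional ingredient that lets us dispense with Pirutka's assumption that $\on{NS}(\cl S)$ be torsion-free.
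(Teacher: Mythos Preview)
Your proposal takes a genuinely different route from the paper, and the central step does not go through as stated.

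The paper's proof is short but rests on the machinery of \S\S\ref{noyauCH2}--\ref{conoyauCH2}. Over $\cl{\F}$, the torsion order bound of Th\'eor\`eme~\ref{t} (ultimately \cite{colliot1985k}) gives $N\cdot A_{0}(S_{F})=0$ for every $F\supset\cl{\F}$, with $N=|\on{NS}(\cl S)_{\rm tors}|$ since $k=\cl{\F}$; as $\ell\nmid N$ one gets $A_{0}(S_{\cl{\F}(C)})\{\ell\}=0$, and Corollaire~\ref{t-cor}(ii) (with $V=C$, $k=\cl{\F}$, $i=3$, $j=2$) yields $H^{3}_{\rm nr}(\cl{\F}(X),\Q_{\ell}/\Z_{\ell}(2))=0$. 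Descent to $\F$ is via Th\'eor\`eme~\ref{presquemainthm}, which in turn relies on Corollaire~\ref{courbesurfaceqcque} and the exact sequence~(\ref{ctkahn}). Finiteness of the cokernel of the cycle map is obtained from \cite[Prop.~3.23]{colliot2013cycles} (using that $CH_{0}(C_{\Omega})\to CH_{0}(X_{\Omega})$ is surjective), and Th\'eor\`eme~\ref{KCTK} then forces it to vanish.

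Your proposal attempts to avoid all of this by producing a short exact sequence placing $H^{3}_{\rm nr}(\F(X),\Q_{\ell}/\Z_{\ell}(2))$ between $H^{1}(G,\on{Br}(\cl X)\{\ell\})$ and $H^{3}_{\rm nr}(\cl{\F}(\cl X),\Q_{\ell}/\Z_{\ell}(2))^{G}$. No such sequence is available: Hochschild--Serre controls $H^{3}(X,-)$, not $H^{3}_{\rm nr}$, and the passage from one to the other is precisely the content of \S\ref{noyauCH2} and the exact sequence~(\ref{ctkahn}), whose outer terms are $CH^{2}$-groups, not purely cohomological groups. Even granting $H^{3}(\cl X,\Z_{\ell}(2))_{\rm tors}=0$ (which your K\"unneth computation does give), the sequence~(\ref{ctkahn}) only identifies $\on{Ker}[H^{3}_{\rm nr}(\F(X))\to H^{3}_{\rm nr}(\cl{\F}(X))]$ with $\on{Coker}[CH^{2}(X)\to CH^{2}(\cl X)^{G}]\{\ell\}$; killing this cokernel requires the correspondence analysis of Proposition~\ref{corresp2fini} together with the $K$-theoretic input of Th\'eor\`eme~\ref{t}, none of which your sketch supplies. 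Likewise, your ``K\"unneth-type analysis of the unramified cohomology of the product'' is not a valid step: there is no K\"unneth formula for $H^{*}_{\rm nr}$, and the vanishing of $H^{3}_{\rm nr}(\cl{\F}(\cl X),\Q_{\ell}/\Z_{\ell}(2))$ is obtained in the paper either via Corollaire~\ref{t-cor} or via Th\'eor\`eme~\ref{a0}, both of which rely on $K_{2}$-cohomology results from \cite{colliot1985k}. Your reduction of the cycle-map statement to the vanishing of $H^{3}_{\rm nr}$ via Proposition~\ref{coktatefini} and Th\'eor\`eme~\ref{KCTK} is correct (the paper uses \cite[Prop.~3.23]{colliot2013cycles} instead, but either route works), and your cohomological preliminaries on $\cl S$ are accurate; the gap is entirely in the treatment of $H^{3}_{\rm nr}$ itself.
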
  
	
	Lorsque le groupe de N\'eron-Severi g\'eom\'etrique de la surface $S$ dans le th\'eor\`eme \ref{mainthm1} est sans torsion, on retrouve le r\'esultat de Pirutka. 
	
	\begin{thm}\label{mainthm2}
		Supposons que
		la surface
		$S$ est g\'eom\'etriquement $CH_{0}$-triviale et que l'on a  
		\begin{equation}\label{cond2}{\rm Hom}_{G}({\rm NS}(\cl{S})\{\ell\}, J(C)(\cl{\F}))=0.
		\end{equation} 
		Alors la fl\`eche naturelle $H^3_{\on{nr}}({\F}(X), \Q_{\ell}/\Z_{\ell}(2))\to H^3_{\on{nr}}({\cl{\F}}(X),\Q_{\ell}/\Z_{\ell}(2))$ est injective.
	\end{thm}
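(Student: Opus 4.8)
The plan is to establish the injectivity by Galois descent from $\cl\F$ to $\F$, reducing the kernel of the restriction map to $H^1(G,-)$ of an explicit geometric group and then computing that group for $X=C\times S$ using the geometric $CH_0$-triviality of $S$.

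\emph{Step 1 (descent).} Since $\cl\F(X)=\F(X)\otimes_\F\cl\F$, $\on{Gal}(\cl\F(X)/\F(X))=G$ and $\on{cd}(\F)=1$, the Hochschild--Serre spectral sequences (for $X$, for $\cl X$, and for every open $U\subseteq X$) degenerate into short exact sequences, compatible with restriction and residue maps. Combining this with the link between $H^3_{\on{nr}}$ and the cycle class map $CH^2\to H^4$ recalled in \S\ref{cyclesTate} (\Cref{coktatefini}, \Cref{KCTK}) — which applies since the Tate conjecture for divisors holds for $X$: indeed all of $H^2(\cl X,\Q_\ell(1))$ is algebraic, $S$ being geometrically $CH_0$-trivial — one finds that the kernel of
\[
H^3_{\on{nr}}(\F(X),\Q_\ell/\Z_\ell(2))\ \longrightarrow\ H^3_{\on{nr}}(\cl\F(X),\Q_\ell/\Z_\ell(2))
\]
is controlled by $H^1\big(G,\on{coker}(CH^2(\cl X)\otimes\Z_\ell\to H^4(\cl X,\Z_\ell(2)))\big)$, once the parasitic contributions discussed below are dealt with.

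\emph{Step 2 (geometric computation).} Since $S$ is geometrically $CH_0$-trivial one has $H^1(\cl S,\Q_\ell)=H^3(\cl S,\Q_\ell)=0$, $\on{Pic}^0_{\cl S}=0$, the cycle class map $\on{NS}(\cl S)\otimes\Z_\ell\xrightarrow{\ \sim\ }H^2(\cl S,\Z_\ell(1))$ is an isomorphism, and, by Poincar\'e--Pontryagin duality on $\cl S$, $H^3(\cl S,\Z_\ell(1))$ is finite and canonically $\on{NS}(\cl S)\{\ell\}^\vee$, where $A^\vee:=\on{Hom}(A,\Q_\ell/\Z_\ell)$. The K\"unneth formula for $\cl X=\cl C\times\cl S$ has vanishing Tor terms (as $H^1(\cl C,\Z_\ell)$ is free and $H^1(\cl S)=0$) and gives
\[
H^4(\cl X,\Z_\ell(2))_{\mathrm{tors}}\ \cong\ \on{NS}(\cl S)\{\ell\}\ \oplus\ \big(H^1(\cl C,\Z_\ell(1))\otimes_{\Z_\ell}\on{NS}(\cl S)\{\ell\}^\vee\big).
\]
The first summand is generated by classes of $1$-cycles of the form $(\text{point of }\cl C)\times D$ with $D$ a torsion element of $\on{NS}(\cl S)$, hence lies in the image of $CH^2(\cl X)\otimes\Z_\ell$; the second is not hit by algebraic cycles, the relevant component of a cycle class factoring through $H^1(\cl S)=0$, and as a $G$-module it is canonically $\on{Hom}(\on{NS}(\cl S)\{\ell\},J(C)(\cl\F))$: indeed $H^1(\cl C,\Z_\ell(1))=T_\ell J(C)$, $J(C)(\cl\F)\{\ell\}=T_\ell J(C)\otimes\Q_\ell/\Z_\ell$, and $\on{Hom}(A,M\otimes\Q_\ell/\Z_\ell)\cong A^\vee\otimes_{\Z_\ell}M$ for a finite $\ell$-group $A$ and a finitely generated free $\Z_\ell$-module $M$. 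Thus $\on{coker}(CH^2(\cl X)\otimes\Z_\ell\to H^4(\cl X,\Z_\ell(2)))\cong\on{Hom}(\on{NS}(\cl S)\{\ell\},J(C)(\cl\F))$, a finite group.

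\emph{Step 3 (conclusion).} For any finite $G$-module $N$ over the finite field $\F$ one has $H^1(G,N)=0$ if and only if $N^G=0$ (as $G=\widehat{\Z}$, and then $\on{Frob}-1$ has trivial kernel on $N$ if and only if it has trivial cokernel). Applying this to $N=\on{Hom}(\on{NS}(\cl S)\{\ell\},J(C)(\cl\F))$, whose $G$-invariants are $\on{Hom}_G(\on{NS}(\cl S)\{\ell\},J(C)(\cl\F))$, the kernel of the restriction map vanishes precisely when hypothesis \eqref{cond2} holds. The technical heart, and what I expect to be the main obstacle, is Step 1: identifying the kernel \emph{exactly} rather than merely bounding it, keeping careful track of all Tate twists, and checking that the parasitic contributions genuinely drop out — namely the classes coming from $CH^2(\cl X)$, and the classes pulled back from the surface factor $S$, for which the $2$-dimensionality of $S$ over $\F$ (so that $H^3_{\on{nr}}(\F(S)/\F,\Q_\ell/\Z_\ell(2))=0$, as recalled in the introduction) ensures that no new kernel classes arise.
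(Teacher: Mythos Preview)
Your proposal has two genuine gaps, and the overall route is not the one the paper takes.

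\textbf{Step 2 contains an error.} You claim the K\"unneth summand $H^3(\cl S,\Z_\ell)\otimes H^1(\cl C,\Z_\ell)$ inside $H^4(\cl X,\Z_\ell(2))_{\mathrm{tors}}$ is not hit by algebraic cycles because ``the relevant component of a cycle class factor[s] through $H^1(\cl S)=0$''. That reasoning only applies to \emph{product} cycles $\alpha\times\beta$, whose classes land in even--even K\"unneth bidegrees; a general $1$-cycle on $\cl C\times\cl S$ has no reason to be decomposable, and its projection to the $H^1\otimes H^3$ component can be nonzero. Indeed, conditionally on the Tate conjecture for surfaces, Schoen's theorem (\Cref{schoen}, \Cref{corschoen}) shows the cycle map $CH^2(\cl X)\otimes\Z_\ell\to H^4(\cl X,\Z_\ell(2))$ is surjective for such $X$, so your cokernel would \emph{vanish} --- and over $\C$, Benoist--Ottem show this component is sometimes algebraic and sometimes not. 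Deciding whether it is algebraic is exactly the integral Tate/Hodge question for $X$; it is not an input you may assume.

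\textbf{Step 1 is not established}, as you yourself note. \Cref{KCTK} relates $H^3_{\on{nr}}$ modulo its maximal divisible subgroup to the torsion in the cycle-class cokernel \emph{over a fixed base field}; it does not say the kernel of the restriction $H^3_{\on{nr}}(\F(X))\to H^3_{\on{nr}}(\cl\F(X))$ is governed by $H^1(G,\on{coker}(CH^2(\cl X)\otimes\Z_\ell\to H^4(\cl X,\Z_\ell(2))))$. The sequence that actually controls this kernel is~(\ref{ctkahn}), whose outer terms are $H^1(\F,H^3(\cl X,\Z_\ell(2))_{\on{tors}})$ and $\on{Coker}[CH^2(X)\to CH^2(\cl X)^G]\{\ell\}$ --- Chow groups, not cycle-class cokernels.

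The paper's proof works along~(\ref{ctkahn}). First (\S\ref{noyauCH2}, \Cref{courbesurfaceqcque}) it shows that for $X=C\times S$ the injection on the left of~(\ref{ctkahn}) is an isomorphism, so the kernel of restriction on $H^3_{\on{nr}}$ is identified with $\on{Coker}[CH^2(X)\to CH^2(\cl X)^G]\{\ell\}$. A correspondence computation (\S\ref{conoyauCH2}, \Cref{corresp2fini}, \Cref{presquemainthm}) then identifies this with $\on{Coker}[A_0(S_K)\to A_0(S_L)^G]\{\ell\}$ where $K=\F(C)$, $L=\cl\F(C)$. Finally, $K$-theoretic input (\S\ref{conoyauCH2bis}, \Cref{a0}(a), building on Raskind and on \cite{colliot1985k}) yields a $G$-equivariant injection
\[
A_0(S_L)\{\ell\}\ \hookrightarrow\ \on{Hom}\bigl(\on{NS}(\cl S)\{\ell\},\,J(C)(\cl\F)\bigr),
\]
so hypothesis~(\ref{cond2}) forces $A_0(S_L)\{\ell\}^G=0$, and the injectivity follows from \Cref{presquemainthm}. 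Note that this route never computes the geometric cycle-class cokernel and does not assume anything about it.
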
 
	
	La preuve du th\'eor\`eme \ref{mainthm2} est le c{\oe}ur technique de notre travail.
	La condition restrictive (\ref{cond2}) est requise pour notre d\'emonstration. Nous ne savons pas si elle est n\'ecessaire. Il faut souligner que sous cette condition la surjectivit\'e de (\ref{tate-int1}) 
	et (\ref{tate-int2}) est facile \`a d\'emontrer  (voir la remarque \ref{tateclassique} (i)), mais ceci n'est pas le cas pour (\ref{tate-int3}).
	C'est une autre manifestation du fait que la version (\ref{tate-int3}) de la conjecture enti\`ere de Tate est beaucoup plus d\'elicate  que les versions (\ref{tate-int1}) et (\ref{tate-int2}). 
	
	\begin{thm}\label{mainthm3}
		Supposons que la surface $S$ est g\'eom\'etriquement $CH_{0}$-triviale, que la condition restrictive (\ref{cond2}) est satisfaite et que l'application cycle
		$$CH^2(\cl{X})\otimes \Z_{\ell} \to H^4(\cl{X},\Z_{\ell}(2))$$ est surjective, alors $H^3_{\on{nr}}(\F(X),\Q_{\ell}/\Z_{\ell}(2))=0$ et l'application cycle
		$$CH^2(X)\otimes \Z_{\ell} \to H^4(X,\Z_{\ell}(2))$$
		est surjective.
	\end{thm}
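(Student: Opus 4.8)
\emph{Esquisse de d\'emonstration.} On combine le th\'eor\`eme~\ref{mainthm2}, le lien entre conoyau de l'application classe de cycles et cohomologie non ramifi\'ee rappel\'e au \S\ref{cyclesTate}, et une \'etude du groupe g\'eom\'etrique $H^3_{\on{nr}}(\cl{\F}(X),\Q_\ell/\Z_\ell(2))$ fond\'ee sur la $CH_0$-trivialit\'e de $S$.

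\emph{R\'eduction.} La surface $S$ \'etant g\'eom\'etriquement $CH_0$-triviale, on a $H^1(\cl{S},\Q_\ell)=0$, l'application d'Albanese de $S$ est triviale, et la classe de cycles $\on{NS}(\cl{S})\otimes\Q_\ell\to H^2(\cl{S},\Q_\ell(1))$ est un isomorphisme (ce dernier point car $b_2(\cl{S})=\rho(\cl{S})$, par la d\'ecomposition de la diagonale de Bloch--Srinivas). La formule de K\"unneth montre alors que $H^2(\cl{X},\Q_\ell(1))$ est engendr\'e par les classes de diviseurs, autrement dit que la conjecture de Tate pour les diviseurs sur $X$ est automatiquement v\'erifi\'ee. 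Par \Cref{coktatefini} et \Cref{KCTK}, il suffit donc de prouver $H^3_{\on{nr}}(\F(X),\Q_\ell/\Z_\ell(2))=0$, la surjectivit\'e de $CH^2(X)\otimes\Z_\ell\to H^4(X,\Z_\ell(2))$ en r\'esultant.

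\emph{L'annulation.} D'apr\`es le th\'eor\`eme~\ref{mainthm2} et l'hypoth\`ese~(\ref{cond2}), le morphisme de restriction $H^3_{\on{nr}}(\F(X),\Q_\ell/\Z_\ell(2))\to H^3_{\on{nr}}(\cl{\F}(X),\Q_\ell/\Z_\ell(2))$ est injectif. Sur $\cl{X}$, la suite spectrale de coniveau de Bloch--Ogus fournit la suite exacte
\[
0\to H^3(\cl{X},\Q_\ell/\Z_\ell(2))/N^1\to H^3_{\on{nr}}(\cl{\F}(X),\Q_\ell/\Z_\ell(2))\to \ker\!\big(CH^2(\cl{X})\otimes\Q_\ell/\Z_\ell\to H^4(\cl{X},\Q_\ell/\Z_\ell(2))\big)\to 0,
\]
o\`u $N^1$ d\'esigne la filtration par le coniveau. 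Le terme de droite est nul : la d\'ecomposition de la diagonale de $S$ ram\`ene le groupe $CH^2(\cl{X})_{\mathrm{hom}}$ des $1$-cycles homologiquement triviaux \`a des groupes de points sur $\cl{\F}$ de vari\'et\'es ab\'eliennes, qui sont de torsion, de sorte que $CH^2(\cl{X})_{\mathrm{hom}}\otimes\Q_\ell=0$ ; combin\'e \`a la surjectivit\'e suppos\'ee de $CH^2(\cl{X})\otimes\Z_\ell\to H^4(\cl{X},\Z_\ell(2))$, ceci donne, par une chasse au diagramme dans la suite $0\to\Z_\ell\to\Q_\ell\to\Q_\ell/\Z_\ell\to0$, l'\'egalit\'e $\ker\big(CH^2(\cl{X})\otimes\Q_\ell/\Z_\ell\to H^4(\cl{X},\Q_\ell/\Z_\ell(2))\big)=0$. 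Ainsi $H^3_{\on{nr}}(\cl{\F}(X),\Q_\ell/\Z_\ell(2))$ s'identifie \`a $H^3(\cl{X},\Q_\ell/\Z_\ell(2))/N^1$. Or la formule de K\"unneth, l'\'egalit\'e $H^2(\cl{S},\Q_\ell(1))=\on{NS}(\cl{S})\otimes\Q_\ell$ et le calcul, par dualit\'e de Poincar\'e sur la surface $S$, du sous-groupe de torsion de $H^3(\cl{S},\Z_\ell(1))$ (isomorphe comme $G$-module au dual de Pontryagin de $\on{NS}(\cl{S})\{\ell\}$, o\`u $G=\on{Gal}(\cl{\F}/\F)$) identifient $H^3(\cl{X},\Q_\ell/\Z_\ell(2))/N^1$ \`a un sous-quotient $G$-\'equivariant de $\on{Hom}(\on{NS}(\cl{S})\{\ell\},J(C)(\cl{\F}))$. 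L'image du morphisme de restriction \'etant form\'ee d'\'el\'ements $G$-invariants, l'hypoth\`ese~(\ref{cond2}) en entra\^ine l'annulation, d'o\`u, par injectivit\'e, $H^3_{\on{nr}}(\F(X),\Q_\ell/\Z_\ell(2))=0$, ce qui termine la d\'emonstration.

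\emph{Point d\'elicat.} Le point crucial est le contr\^ole du terme $H^3(\cl{X},\Q_\ell/\Z_\ell(2))/N^1$, la partie ``de type Brauer'' de la cohomologie non ramifi\'ee g\'eom\'etrique : nul lorsque $\on{NS}(\cl{S})\{\ell\}=0$ (cas du th\'eor\`eme~\ref{mainthm1}, et de Pirutka), il peut \^etre non trivial en g\'en\'eral, et $H^3_{\on{nr}}(\cl{\F}(X),\Q_\ell/\Z_\ell(2))$ n'est alors pas le conoyau d'une application classe de cycles. Il faut d\'eterminer finement quelle partie de ce groupe peut \^etre atteinte par des classes d\'efinies sur $\F$ --- ce qui rel\`eve du m\'ecanisme de descente galoisienne du th\'eor\`eme~\ref{mainthm2} --- et v\'erifier que l'hypoth\`ese~(\ref{cond2}) la fait dispara\^{\i}tre ; c'est ici qu'interviennent de fa\c{c}on essentielle ce th\'eor\`eme et la structure de produit de $X$.
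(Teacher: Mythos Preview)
Your overall strategy --- reduce to $H^3_{\on{nr}}(\F(X),\Q_\ell/\Z_\ell(2))=0$, then descend from $\cl{\F}$ via Th\'eor\`eme~\ref{mainthm2} --- is the paper's. The gap is in how you treat $H^3_{\on{nr}}(\cl{\F}(X),\Q_\ell/\Z_\ell(2))$. You identify it (after your Bloch--Ogus argument) with a $G$-equivariant \emph{subquotient} of $\on{Hom}(\on{NS}(\cl{S})\{\ell\},J(C)(\cl{\F}))$ and then invoke (\ref{cond2}) to conclude that its $G$-invariants vanish. But $M^G=0$ does not in general imply $N^G=0$ for a subquotient $N$ of $M$; it \emph{does} hold here, because $M$ is finite and $G\simeq\hat{\Z}$ (so $|M^G|=|M_G|$, whence $M^G=0$ forces $\sigma-1$ to be bijective on $M$ and hence on every subquotient), but this is a genuine step that you do not supply, and without it the argument does not close. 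Your Bloch--Ogus exact sequence and the diagram chase for its right-hand term would also need justification; in fact Th\'eor\`eme~\ref{a0}(b) already exhibits $H^3_{\on{nr}}(\cl{\F}(X),\Q_\ell/\Z_\ell(2))$ directly as a quotient of the Hom group, which would let you bypass that detour entirely.

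The paper's route is much shorter and avoids all of this. It uses the surjectivity hypothesis on $\cl{X}$ in one stroke: by \Cref{KCTK}, surjectivity of $CH^2(\cl{X})\otimes\Z_\ell\to H^4(\cl{X},\Z_\ell(2))$ forces $H^3_{\on{nr}}(\cl{\F}(X),\Q_\ell/\Z_\ell(2))$ to be divisible; since $S$ is g\'eom\'etriquement $CH_0$-triviale this group is of finite exponent \cite[Prop.~3.2]{colliot2013cycles}, hence it is zero outright. Th\'eor\`eme~\ref{mainthm2} then gives $H^3_{\on{nr}}(\F(X),\Q_\ell/\Z_\ell(2))=0$, and the surjectivity of the cycle map over $\F$ follows as in your reduction. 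So rather than trying to control only the $G$-invariant part of the geometric $H^3_{\on{nr}}$, the paper simply kills the whole group.
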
  
	Le th\'eor\`eme \ref{mainthm3} ne d\'epend pas de la conjecture de Tate pour les surfaces sur un corps fini. 
	Si $X$ est le produit d'une courbe $C$ et d'une surface $S$ g\'eom\'etriquement $CH_{0}$-triviale, 
	$H^4(\cl{X},\Z_{\ell}(2))^{(1)}=H^4(\cl{X},\Z_{\ell}(2))$. Si la conjecture de Tate vaut pour toutes les surfaces sur un corps fini, le th\'eor\`eme de Schoen implique  alors que l'application $CH^2(\cl{X})\otimes\Z_{\ell}\to H^4(\cl{X},\Z_{\ell}(2))$ est surjective, c'est-\`a-dire, $X$ satisfait l'hypoth\`ese suppl\'ementaire du th\'eor\`eme \ref{mainthm3}. On obtient ainsi - pour nos cas particuliers, voir le th\'eor\`eme \ref{presquemainthmTate} - un \'enonc\'e de la forme du th\'eor\`eme de Schoen, mais avec (\ref{tate-int2}) remplac\'ee par (\ref{tate-int3}): 
	
	\begin{thm}\label{mainthm4}
		Sous la conjecture de Tate pour les surfaces, si $X=C\times_{\F}S$ 
		o\`u
		$C$ est une courbe, $S$ est une surface $CH_0$-triviale et la condition restrictive (\ref{cond2}) est satisfaite, alors l'application cycle $CH^2(X)\otimes \Z_{\ell} \to H^4(X,\Z_{\ell}(2))$ est surjective.
	\end{thm}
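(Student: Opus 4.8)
The plan is to obtain \Cref{mainthm4} as a formal consequence of \Cref{mainthm3} together with Schoen's theorem \Cref{schoen}. Recall that \Cref{mainthm3} applies to $X=C\times_\F S$ under three hypotheses: that $S$ be geometrically $CH_0$-trivial, that the restrictive condition (\ref{cond2}) hold, and that the geometric cycle map $CH^2(\cl{X})\otimes\Z_\ell\to H^4(\cl{X},\Z_\ell(2))$ be surjective. The first two of these are among the assumptions of \Cref{mainthm4}, so the entire problem reduces to verifying the third one under the Tate conjecture for surfaces over finite fields.

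First I would check that $H^4(\cl{X},\Z_\ell(2))^{(1)}=H^4(\cl{X},\Z_\ell(2))$, i.e. that every class in degree $4$ has open stabilizer in $G$. This is where geometric $CH_0$-triviality of $S$ enters: by a Bloch--Srinivas-type argument it forces $H^1(\cl{S},\Q_\ell)=H^3(\cl{S},\Q_\ell)=0$ and makes $G$ act through a finite quotient on $H^2(\cl{S},\Q_\ell(1))=\on{NS}(\cl{S})\otimes\Q_\ell$, and likewise on the finite groups $H^i(\cl{S},\Z_\ell(j))_{\on{tors}}$; tensoring such a finite continuous $G$-module with any $H^a(\cl{C},\Z_\ell(b))$ again yields a module on which $G$ acts through a finite quotient. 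Feeding this into the Künneth decomposition of $H^4(\cl{X},\Z_\ell(2))$ — whose surviving pieces are built out of $H^0(\cl{C})$, $H^1(\cl{C})$, $H^2(\cl{C})$ and out of $H^2(\cl{S})$, $H^4(\cl{S})$, the free parts of $H^4(\cl{S},\Z_\ell(2))$, $H^2(\cl{C},\Z_\ell(1))$, $H^0(\cl{C},\Z_\ell)$ carrying the trivial $G$-action — one checks term by term that every summand, free or torsion, consists of classes with open stabilizer.

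Granting this, Schoen's theorem \Cref{schoen} gives the conclusion: under the Tate conjecture for divisors on surfaces the cycle map (\ref{tate-int2}) is surjective in every dimension, so for $d=3$ one gets surjectivity of $CH^2(\cl{X})\otimes\Z_\ell\to H^4(\cl{X},\Z_\ell(2))^{(1)}$, and by the previous step the target equals $H^4(\cl{X},\Z_\ell(2))$. All three hypotheses of \Cref{mainthm3} are thus met, and that theorem yields both $H^3_{\on{nr}}(\F(X),\Q_\ell/\Z_\ell(2))=0$ and the surjectivity of $CH^2(X)\otimes\Z_\ell\to H^4(X,\Z_\ell(2))$ asserted in \Cref{mainthm4}. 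There is no genuine obstacle here beyond assembling the ingredients; the one point deserving care is the Künneth/Galois bookkeeping for $H^4(\cl{X},\Z_\ell(2))^{(1)}$, in particular the $\ell$-torsion coming from $\on{NS}(\cl{S})_{\on{tors}}$ and from $H^3(\cl{S},\Z_\ell(1))_{\on{tors}}$, but these being finite $G$-modules their contributions are automatically of the required type. All the real difficulty of this circle of ideas sits inside \Cref{mainthm3}, hence inside \Cref{mainthm2}, which we may invoke freely.
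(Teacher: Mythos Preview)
Your proposal is correct and follows essentially the same route as the paper: the paragraph immediately following the statement of \Cref{mainthm4} explains that geometric $CH_0$-triviality of $S$ forces $H^4(\cl{X},\Z_\ell(2))^{(1)}=H^4(\cl{X},\Z_\ell(2))$, so that Schoen's theorem under the Tate conjecture yields surjectivity of the geometric cycle map, and then \Cref{mainthm3} applies. Your K\"unneth bookkeeping for the equality $H^4(\cl{X},\Z_\ell(2))^{(1)}=H^4(\cl{X},\Z_\ell(2))$ is the right justification (the paper simply asserts it); note that the piece $H^3(\cl{S},\Z_\ell(2))\otimes H^1(\cl{C},\Z_\ell)$, which you gloss over slightly, is fine because $H^3(\cl{S},\Z_\ell)$ is finite when $b_3=0$, so the tensor product is a finite $G$-module.
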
  
	
	Comme exemple d'application des th\'eor\`emes \ref{mainthm2} et \ref{mainthm4}, on peut prendre pour $S$ une surface d'Enriques  sur un corps fini  $\F$, et pour $C$ une courbe elliptique $E$ sur $\F$.
	Pour $\ell \neq 2$,  la condition (\ref{cond2}) est automatiquement satisfaite. Si $\ell=2$, la condition (\ref{cond2}) est que la courbe elliptique n'a pas de point de $2$-torsion
	non nul d\'efini sur $\F$, c'est-\`a-dire, que $E$ a un mod\`ele affine d'\'equation $y^2=f(x)$, o\`u $f(x)$ est un polynome irr\'eductible de degr\'e $3$. Donc cette condition est remplie ``une fois sur deux".
	
	Nous d\'ecrivons maintenant les principaux ingr\'edients de nos preuves. Des techniques de $K$-th\'eorie alg\'ebrique combin\'ees aux conjectures de Weil mon\-trent (\cite[Th\'eor\`eme 6.8]{colliot2013cycles}) que, pour toute vari\'et\'e $X$ projective, lisse et g\'eom\'e\-tri\-quement connexe sur un corps fini $\F$,  on a une suite exacte longue de groupes de torsion
	\begin{equation}\label{ctkahn}
		\hspace*{-0.5cm}0\to \on{Ker}[CH^2(X)\{\ell\} \to CH^2(\cl{X})\{\ell\}    ] \to H^1(\F,  H^3(\cl{X},\Z_{\ell}(2))_{\on{tors}}) \end{equation} \[\hspace*{3cm} \to \on{Ker}[H^3_{\on{nr}}(\F(X),\Q_{\ell}/\Z_{\ell}(2) \to H^3_{\on{nr}}(\cl{\F}(X),\Q_{\ell}/\Z_{\ell}(2)) ]  \] \[\hspace*{6.3cm} \to \on{Coker}[CH^2(X)\to CH^2(\cl{X})^G] \{\ell\}\to 0.\]

	Sous certaines hypoth\`eses sur $X$, on cherche \`a \'etablir la nullit\'e du  groupe  $$\on{Ker}[H^3_{\on{nr}}(\F(X),\Q_{\ell}/\Z_{\ell}(2)) \to H^3_{\on{nr}}(\cl{\F}(X),\Q_{\ell}/\Z_{\ell}(2))]$$ en analysant d'une part la nullit\'e de la fl\`eche  
	$$H^1(\F,  H^3(\cl{X},\Z_{\ell}(2))_{\on{tors}})\to H^3_{\on{nr}}(\F(X),\Q_{\ell}/\Z_{\ell}(2)) \subset H^3(\F(X),\Q_{\ell}/\Z_{\ell}(2))$$
	d'autre part la trivialit\'e du groupe $$\on{Coker}[CH^2(X)\to CH^2(\cl{X})^G]\{\ell\}$$	
	dans (\ref{ctkahn}).  
	
	\medskip
	La   nullit\'e de la fl\`eche mentionn\'ee  est un pur probl\`eme de cohomologie
	$\ell$-adique, qui se transcrit ainsi  :

	{\it  Pour  $X$ projective,  lisse, g\'eom\'etriquement connexe sur un corps fini,
		le  noyau de la fl\`eche  $H^3(X,\Q_{\ell}/\Z_{\ell}(2)) \to H^3(\overline{X},\Q_{\ell}/\Z_{\ell}(2))$
		a-t-il une image nulle dans le groupe $ H^3(\F(X), \Q_{\ell}/\Z_{\ell}(2))$ ?}
	
	\`A  notre connaissance,  c'est un probl\`eme ouvert.  
	C'est connu pour $X$ de dimension au plus 2.  
	C'est  un des premiers r\'esultats de la th\'eorie du corps de classes sup\'erieur
	\cite{colliot1983torsion, katoclf}.
	Au \S \ref{noyauCH2}, nous montrons (Corollaire \ref{courbesurfaceqcque}) que c'est aussi le cas
	pour tout produit d'une surface et d'un nombre quelconque de courbes:
	\begin{thm}\label{courbesurface} 
		Soit $\F$ un corps fini et $\ell$ un premier distinct de la caract\'eristique de $\F$.
		Soit $S/\F$ une surface projective lisse, g\'eom\'etriquement connexe.
		Soit  $Y/\F$ un produit de courbes projectives, lisses, g\'eom\'etriquement connexes.
		Soit  $X= Y \times S$.
		Alors le  noyau de la fl\`eche  $H^3(X,\Q_{\ell}/\Z_{\ell}(2)) \to H^3(\overline{X},\Q_{\ell}/\Z_{\ell}(2))$
		a une image nulle dans le groupe $ H^3(\F(X), \Q_{\ell}/\Z_{\ell}(2))$. 
\end{thm}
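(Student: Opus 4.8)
Le principe de la d\'emonstration est le suivant. La premi\`ere \'etape est la suite spectrale de Hochschild--Serre~: comme $\operatorname{cd}_{\ell}(\F)=1$, on dispose pour toute $\F$-vari\'et\'e projective et lisse $X$ de la suite exacte
$$0\to H^1(\F,H^2(\overline{X},\Q_{\ell}/\Z_{\ell}(2)))\to H^3(X,\Q_{\ell}/\Z_{\ell}(2))\to H^3(\overline{X},\Q_{\ell}/\Z_{\ell}(2))^{G}\to 0$$
ainsi que de son analogue pour $\Spec\F(X)$. Le noyau de $H^3(X,\Q_{\ell}/\Z_{\ell}(2))\to H^3(\overline{X},\Q_{\ell}/\Z_{\ell}(2))$ est donc $H^1(\F,H^2(\overline{X},\Q_{\ell}/\Z_{\ell}(2)))$, et son image dans $H^3(\F(X),\Q_{\ell}/\Z_{\ell}(2))$ co\"incide avec celle de l'application $H^1(\F,r)$ d\'eduite de la restriction au point g\'en\'erique $r\colon H^2(\overline{X},\Q_{\ell}/\Z_{\ell}(2))\to H^2(\cl{\F}(X),\Q_{\ell}/\Z_{\ell}(2))$. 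La suite de Kummer identifie l'image de $r$ au sous-groupe $\operatorname{Br}(\overline{X})\{\ell\}(1)$, les classes de diviseurs devenant triviales dans le groupe de Brauer du corps des fonctions. Il suffit donc d'annuler l'image de $H^1(\F,\operatorname{Br}(\overline{X})\{\ell\}(1))$. Or le module de Tate de la partie divisible de $\operatorname{Br}(\overline{X})\{\ell\}$ est un sous-quotient de $H^2(\overline{X},\Q_{\ell}(1))$, donc pur de poids $0$~; apr\`es torsion par $\Z_{\ell}(1)$ il devient pur de poids $-2$, le Frobenius g\'eom\'etrique n'y admet pas la valeur propre $1$, et $F-1$ y est surjectif, de sorte que la partie divisible ne contribue pas \`a $H^1(\F,-)$. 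On est ainsi ramen\'e \`a la partie non divisible $E$ de $\operatorname{Br}(\overline{X})\{\ell\}$, groupe fini qui est un sous-quotient de $H^3(\overline{X},\Z_{\ell}(1))_{\operatorname{tors}}$.

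C'est \`a ce stade qu'on exploite l'hypoth\`ese $X=Y\times S$ avec $Y=C_1\times\cdots\times C_n$ un produit de courbes. Comme $H^{*}(\overline{Y},\Z_{\ell})$ est sans torsion, la formule de K\"unneth fournit un isomorphisme de $G$-modules
$$H^3(\overline{X},\Z_{\ell}(1))_{\operatorname{tors}}\ \cong\ \Bigl(H^1(\overline{Y},\Z_{\ell})\otimes_{\Z_{\ell}}\operatorname{NS}(\overline{S})\{\ell\}\Bigr)\ \oplus\ H^3(\overline{S},\Z_{\ell}(1))_{\operatorname{tors}},$$
car $H^j(\overline{S},\Z_{\ell})_{\operatorname{tors}}=0$ pour $j\le 1$ et $H^2(\overline{S},\Z_{\ell}(1))_{\operatorname{tors}}\cong\operatorname{NS}(\overline{S})\{\ell\}$. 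La contribution du facteur $H^3(\overline{S},\Z_{\ell}(1))_{\operatorname{tors}}$ \`a l'image cherch\'ee provient de la surface $S$ seule, par image r\'eciproque le long de $X\to S$~: elle se factorise par $H^3(\F(S),\Q_{\ell}/\Z_{\ell}(2))\to H^3(\F(X),\Q_{\ell}/\Z_{\ell}(2))$ et est nulle en vertu du th\'eor\`eme de Colliot-Th\'el\`ene, Sansuc et Soul\'e et de Kato, qui donne m\^eme $H^3_{\operatorname{nr}}(\F(S)/\F,\Q_{\ell}/\Z_{\ell}(2))=0$, et donc la nullit\'e de $H^3(S,\Q_{\ell}/\Z_{\ell}(2))\to H^3(\F(S),\Q_{\ell}/\Z_{\ell}(2))$.

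Reste le \og terme mixte \fg\ $\bigoplus_i H^1\bigl(\F,H^1(\overline{C_i},\Z_{\ell})\otimes\operatorname{NS}(\overline{S})\{\ell\}\bigr)$~: c'est l\`a le \emph{c\oe{}ur technique}. L'id\'ee est de le traiter facteur par facteur par passage \`a la fibre g\'en\'erique de la projection $X\to C_i$~: si $L_i=\F(C_i)$, cette fibre est $\bigl(\prod_{j\neq i}C_j\bigr)\times S$ sur $L_i$, son corps des fonctions est $\F(X)$, et $H^3(X,\Q_{\ell}/\Z_{\ell}(2))\to H^3(\F(X),\Q_{\ell}/\Z_{\ell}(2))$ se factorise par la cohomologie de cette fibre~; la suite spectrale de Leray de $X\to C_i$ montre que le $i$-i\`eme facteur s'envoie dans le noyau de la fl\`eche analogue pour $\bigl(\prod_{j\neq i}C_j\times S\bigr)_{L_i}$ sur $L_i$. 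On obtient ainsi le m\^eme \'enonc\'e pour le produit d'une surface et de $n-1$ courbes, mais sur le corps $L_i$, ce qui autorise une r\'ecurrence sur $n$~; le pas initial de cette r\'ecurrence requiert un analogue, sur les corps de fonctions de courbes sur $\F$, du th\'eor\`eme de Colliot-Th\'el\`ene--Sansuc--Soul\'e et Kato. \textbf{L'obstacle essentiel est bien ce terme mixte}~: l'argument na\"if de transfert le long du rev\^etement cyclique de $S$ associ\'e \`a une classe de torsion de $\operatorname{NS}(\overline{S})$ \'echoue, le degr\'e de ce rev\^etement \'etant pr\'ecis\'ement l'ordre de la classe, et c'est le passage \`a la fibre g\'en\'erique, joint au th\'eor\`eme de Lang ($H^1(\F,J(C_i))=0$, appliqu\'e \`a une jacobienne \'eventuellement tordue), qui fournit la r\'eduction voulue.
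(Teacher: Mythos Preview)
Ton cadre g\'en\'eral est correct et co\"incide avec celui de l'article jusqu'\`a la d\'ecomposition de K\"unneth~: identification du noyau via Hochschild--Serre, r\'eduction \`a $H^1(\F,H^3(\overline{X},\Z_{\ell}(2))_{\rm tors})$ par un argument de poids (c'est l'isomorphisme $\rho_{\ell}$ de l'article), d\'ecomposition de cette torsion en un terme venant de $S$ seul et un terme mixte, et traitement du premier par projection sur $S$ et le r\'esultat de \cite{colliot1983torsion, katoclf}. Jusque-l\`a, pas d'\'ecart.

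La divergence, et le probl\`eme, est dans le traitement du terme mixte. Ta r\'ecurrence change de corps de base \`a chaque \'etape~: partant de $X=C_1\times\cdots\times C_n\times S$ sur $\F$, tu passes \`a $(\prod_{j\neq i}C_j\times S)_{L_i}$ sur $L_i=\F(C_i)$, puis il faudrait recommencer sur $L_i$, etc. Au bout du compte, le cas initial est la surface $S$ sur un corps de fonctions de degr\'e de transcendance $n$ sur $\F$, de $\ell$-dimension cohomologique $n+1$. L'\og analogue de CTSS--Kato \fg\ que tu invoques sur de tels corps n'est ni \'enonc\'e pr\'ecis\'ement ni d\'emontr\'e, et il n'est pas plus facile que ce que tu cherches \`a \'etablir~: d\'ej\`a pour $n=1$, la suite spectrale de Hochschild--Serre sur $L_i$ a deux crans non triviaux, et l'\'enonc\'e voulu pour $S_{L_i}$ est essentiellement \'equivalent \`a l'\'enonc\'e de d\'epart pour $C_i\times S$ sur $\F$. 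L'appel au th\'eor\`eme de Lang pour une \og jacobienne tordue \fg\ reste trop vague pour combler ce trou~: Lang donne $H^1(\F,A)=0$ pour un groupe alg\'ebrique connexe $A$, mais le terme mixte $H^1(\F,H^1(\overline{C_i},\Z_{\ell})\otimes \operatorname{NS}(\overline{S})\{\ell\})$ n'est pas de cette forme et n'est d'ailleurs pas nul en g\'en\'eral --- c'est son image dans $H^1(\F,H^2(\overline{\F}(X),\Q_{\ell}/\Z_{\ell}(2)))$ qu'il faut annuler.

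L'article proc\`ede tout autrement et \emph{reste sur $\F$}~: la r\'ecurrence consiste \`a multiplier successivement par une courbe via un th\'eor\`eme de produit (Th\'eor\`eme~\ref{noyau}) qui ne change pas le corps de base. Le terme mixte $H^1(\overline{C},\mu_{\ell^n})\otimes H^1(\overline{Z},\mu_{\ell^n})$ y est trait\'e par la Proposition~\ref{mult}(d)~: pour tout $\F$-groupe de type multiplicatif fini $M$, l'image de $H^1(\F,H^1(\overline{C},M))$ dans $H^1(\F,H^1(\overline{\F}(C),M))$ est nulle. La preuve utilise la r\'esolution d'Endo--Miyata $1\to M\to P\to Q\to 1$ avec $P$ tore quasi-trivial et $Q$ facteur direct d'un tel tore (Lemme~\ref{endomiyata}, sp\'ecifique au groupe de Galois procyclique), jointe au fait que $\operatorname{NS}(\overline{C})=\Z$ est un module de permutation et que $\operatorname{Br}_1(C_{\F'})=0$ (c'est ici qu'intervient le th\'eor\`eme de Lang). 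C'est cette r\'esolution par des tores qui remplace ta tentative de r\'ecurrence sur le nombre de courbes par changement de corps.
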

	 
	C'est une cons\'equence  d'un \'enonc\'e g\'en\'eral, le Th\'eor\`eme \ref{noyau}.
	
	\medskip

	L'\'etude du groupe  $\on{Coker}(CH^2(X)\to CH^2(\cl{X})^G)$ (groupe qui est de torsion)  pour un produit $X= C \times S$ 
	d'une courbe et d'une surface est plus d\'elicate.  
	Au \S \ref{conoyauCH2}, sous l'hypoth\`ese que  la surface $S$ est g\'eom\'etriquement $CH_0$-triviale, nous donnons
	une condition \'equi\-va\-lente \`a l'injectivit\'e de la restriction
	$$H^3_{\on{nr}}({\F}(X), \Q_{\ell}/\Z_{\ell}(2))\to H^3_{\on{nr}}({\cl{\F}}(X),\Q_{\ell}/\Z_{\ell}(2)) $$
	en termes du groupe de Chow des  z\'ero-cycles de degr\'e 0 sur  la surface $S\times_{\F}{\F(C)}$, \`a savoir: 
	\begin{equation}\label{cond36}
		\text{L'homomorphisme $A_{0}(S_{\F(C)})\{\ell\} \to A_{0}(S_{\cl{\F}(C)})\{\ell\}^G$ est surjectif.}
	\end{equation}
	Ce r\'esultat (Th\'eor\`eme \ref{presquemainthm}) est obtenu par un calcul de correspondances sur le produit $X= C \times S$. On l'utilise avec des r\'esultats sur les exposants de torsion des surfaces qui reposent sur \cite{colliot1985k} pour d\'emontrer le  th\'eor\`eme \ref{mainthm1}.
	
	Au \S \ref{conoyauCH2bis},  en utilisant des outils \'elabor\'es venant de la $K$-th\'eorie alg\'ebrique,
	nous montrons que la condition (\ref{cond36})
	est satisfaite sous l'hypoth\`ese 
	suppl\'e\-men\-taire
	(\ref{cond2}), car elle implique $ A_{0}(S_{\cl{\F}(C)})\{\ell\}^G=0$. 
	Ceci ach\`eve la d\'emonstration du th\'eor\`eme \ref{mainthm2}.
	La  preuve du th\'eor\`eme \ref{mainthm3}, 
	portant sur la conjecture de Tate enti\`ere pour les 1-cycles,
	est aussi \'etablie, \`a l'aide des r\'esultats rappel\'es au \S 
	\ref{cyclesTate}.
	Le \S
	\ref{conoyauCH2bis}
	se termine par deux remarques. La premi\`ere remarque fait le  parall\`ele avec le travail de Benoist et Ottem \cite{benoist2018failure}
	sur la conjecture de Hodge enti\`ere pour le produit d'une courbe et d'une surface d'Enriques complexes. La seconde remarque, d\'ej\`a mentionn\'ee ci-dessus, donne une preuve simple de la surjectivit\'e de (\ref{tate-int1}) pour $X$ comme dans 
	le
	th\'eor\`eme \ref{mainthm2}.
	Au \S \ref{cyclesTate} on donne des rappels sur l'application cycle en cohomologie \'etale
	$\ell$-adique 
	pour les vari\'et\'es projectives et lisses sur un corps fini,
	la question de la surjectivit\'e de cette application pour les $1$-cycles,
	et le lien de cette question, dans le cas de vari\'et\'es $X$ de dimension 3,
	avec la nullit\'e \'eventuelle du groupe $H^3_{\on{nr}}({\F}(X), \Q_{\ell}/\Z_{\ell}(2))$.
	Comme indiqu\'e ci-dessus, ces rappels sont utilis\'es dans la d\'emonstration du 
	th\'eor\`eme \ref{mainthm3}.

	\subsection*{Notations et rappels}
	Si $A$  est un groupe ab\'elien, $n\geq 1$ est un entier, et $\ell$  est un nombre premier,  on note $A[n]:=\set{a\in A: na=0}$, $A\set{\ell}$ le sous-groupe de torsion $\ell$-primaire de $A$, $A_{\on{tors}}$ le sous-groupe de torsion de $A$, et $A/\on{tors}:=A/A_{\on{tors}}$. Si $B$ est un autre groupe ab\'elien, on note $A\otimes B:=A\otimes_{\Z}B$ et $\on{Hom}(A,B):=\on{Hom}_{\Z}(A,B)$. 
	Si $p$ est un nombre premier, on note
	$A_{p}:=A\otimes \Z[1/p]$.

	Si $k$ est un corps, on note $k^*:=k\setminus \set{0}$ le groupe multiplicatif de $k$, et $\cl{k}$ une cl\^{o}ture s\'eparable de $k$. Si $k'/k$ est une extension galoisienne de corps, $G=\on{Gal}(k'/k)$ est le groupe de Galois, et $M$ est un $G$-module continu, on note   $H^i(G,M)$ le $i$-\`eme groupe de cohomologie galoisienne de $G$ \`a valeurs dans $M$, et on \'ecrit $M^G:=H^0(G,M)$ pour le sous-module des \'el\'ements fix\'es par $G$. Si $k'=\cl{k}$, on \'ecrit $H^i(k,M)$ pour $H^i(G,M)$. Si $X$ est un sch\'ema et $F$ un faisceau 
	pour la topologie \'etale sur $X$, on note $H^{i}(X,F)=H^{i}_{\text{\'et}}(X,F)$ les groupes
	de cohomologie \'etale.
	
	Si $k$ est un corps, $\ell$   un nombre premier diff\'erent de la caract\'eristique de $k$, et $n\geq 1$ un entier, on note  $\mu_{{\ell}^n}$ 
	le faisceau ab\'elien \'etale sur $X$ associ\'e au
	sch\'ema en groupes fini \'etale des racines $\ell^n$-i\`emes de l'unit\'e. Si $i>0$, on note $\mu_{{\ell}^n}^{\otimes i}:=\mu_{{\ell}^n}\otimes\dots\otimes\mu_{{\ell}^n}$ ($i$ fois), et si $i<0$ on note $\mu_{{\ell}^n}^{\otimes i}:=\on{Hom}(\mu_{{\ell}^n}^{\otimes (-i)},\Z/{\ell}^n)$, et $\mu_{{\ell}^n}^{\otimes 0}:= \Z/\ell^n$. Si $X$ est un $k$-sch\'ema, on note  $H^i(X,\mu_{{\ell}^n}^{\otimes j})$ les groupes
	de cohomologie \'etale associ\'es.  
	On  note	$H^i(X,\Q_{\ell}/\Z_{\ell}(j))$ la limite directe des 
	$H^i(X,\mu_{{\ell}^n}^{\otimes j})$ pour $n$ tendant vers l'infini. On note 
	$H^i(X,\Z_{\ell}(j))$ la limite projective des $H^i(X,\mu_{{\ell}^n}^{\otimes j})$ et 
	$H^i(X,\Q_{\ell}(j)):= H^i(X,\Z_{\ell}(j))\otimes_{\Z_{\ell}}{\Q}_{\ell}$. Si $k$ est s\'eparablement clos et $X$ est propre et lisse, on note   $b_i(X):=\on{dim}_{\Q_{\ell}}H^i(X,\Q_{\ell})$ les nombres de Betti de $X$.
	
	Si $k'/k$ est une extension de corps et $X$ est un $k$-sch\'ema, on note $X_{k'}:=X\times_kk'$. Si $k'=\cl{k}$, on \'ecrit $\cl{X}$ pour $X\times_k\cl{k}$.
	
	Une vari\'et\'e sur $k$ est un sch\'ema s\'epar\'e de type fini sur le corps $k$. Si $X$ est un sch\'ema, on note $H^i(X,\mc{O}_X)$ les groupes de cohomologie de Zariski \`a valeurs dans le faisceau structural $\mc{O}_X$, on note $\on{Pic}(X)$ le groupe de Picard $H^1_{\on{Zar}}(X,\G_{\on{m}}) \simeq H^1_{\text{\'et}}(X,\G_{\on{m}})$, et on note $\on{Br}(X)$ le groupe de Brauer cohomologique $H^2_{\text{\'et}}(X,\G_{\on{m}})$. Si $X$ est une vari\'et\'e connexe, propre et lisse sur un corps s\'eparablement clos, on note $\on{NS}(X)$   le groupe de N\'eron-Severi de $X$ (qui est un groupe de type fini), et on note $\rho(X)$ le rang de $\on{NS}(X)$. Si $X$ est une vari\'et\'e  propre, lisse,
	g\'eom\'etriquement connexe sur un corps $k$, on note   $q(X)$ la dimension du $k$-espace vectoriel $H^1(X,\mc{O}_X)$.
	
	Si $X$ est une vari\'et\'e sur un corps $k$, on note   $CH_i(X)$ le groupe des cycles de dimension $i$ modulo \'equivalence rationnelle. Si $X$ est lisse et irr\'eductible de dimension $d$, on note $CH^i(X):= CH_{d-i}(X)$. Si $X$ est propre sur $k$, on note  $A_0(X)$ le noyau de l'homomorphisme de degr\'e $CH_{0}(X)\to \Z$. Si $X$ est irr\'eductible, on note  $X^{(j)}$ l'ensemble des points de codimension $j$ de $X$.
	
	Si $\ell$ est un nombre premier inversible dans $\mc{O}_X$, on note $\mc{H}^i(X,\mu_{{\ell}^n}^{\otimes j})$ le faisceau Zariski sur $X$ associ\'e au pr\'efaisceau $U\mapsto H^i(U,\mu_{{\ell}^n}^{\otimes j})$. Si $X$ est un sch\'ema, on note  $\mc{K}_i$ le faisceau Zariski sur $X$ associ\'e au pr\'efaisceau $U\mapsto K_i(H^0(U,\mc{O}_X))$, $K_i$ \'etant le $i$-\`eme groupe de $K$-th\'eorie de Quillen.

	Terminons par un rappel sur les vari\'et\'es g\'eom\'etriquement $CH_{0}$-trivales.
	Soient $k$ un corps d'exposant caract\'eristique $p$ et $X$ une $k$-vari\'et\'e projective, 
	lisse, g\'eom\'e\-triquement connexe.  Supposons que $X$ est {\it g\'eom\'etriquement
		$CH_{0}$-trivale}, c'est-\`a-dire que pour tout corps alg\'ebriquement clos $\Omega$
	contenant $k$, l'application  degr\'e  $CH_{0}(X_{\Omega})\otimes\Q  \to \Q$ est un isomorphisme
	(pour une propri\'et\'e \'equivalente, voir le lemme \ref{albanese}). 
	Si cette propri\'et\'e est satisfaite, alors,
	pour tout corps $F$ contenant $k$, le noyau $A_{0}(X_{F})$
	de la fl\`eche degr\'e $CH_{0}(X_{F})\to \Z$ est un groupe de torsion.
	Supposons de plus $k$ s\'eparablement clos. Un argument de correspondances
	(voir \cite[\S 1]{auelctparimala}) montre qu'alors  les propri\'et\'es suivantes
	sont satisfaites :

	(i) Pour tout $\ell$ premier, $\ell\neq p$, on a   $b_{1}(X)={\rm dim}(H^1(X,\Q_{\ell}))=0$.
	La vari\'et\'e de Picard r\'eduite $\on{Pic}^0_{X/k,{\rm red}}$ est triviale,
	et la vari\'et\'e d'Albanese, dont la dimension est $b_{1}$, est triviale.
	Le groupe de Picard $\on{Pic}(X)$
	de $X$ co\"{\i}ncide avec le groupe de N\'eron-Severi $\on{NS}(X)$  de $X$,
	qui est un groupe de type fini.
	[On n'a pas n\'ecessairement $q={\rm dim}(H^1(X,\mc{O}_X))=0$, comme le montre l'exemple
	de certaines surfaces d'Enriques supersinguli\`eres en caract\'eristique 2,
	qui sont (ins\'eparablement) unirationnelles.]
	
	(ii) Pour tout premier $\ell \neq p$, la fl\`eche naturelle $\on{NS}(X)\otimes \Z_{\ell} \to H^2(X,\Z_{\ell}(1))$
	est un isomorphisme. En particulier 
	la dimension $\rho$ de l'espace vectoriel $\on{NS}(X)\otimes \Q_{\ell}$
	est \'egal au rang $b_{2}$ du $\Q_{\ell}$-vectoriel $H^2(X,\Q_{\ell})$.
	
	(iii) Si $\on{dim}(X)=2$,  la forme d'intersection sur $\on{Num}(X)=\on{NS}(X)/\on{NS}(X)_{\rm tors}$
	a son discriminant
	de la forme $\pm p^r$ pour un  entier $r\geq 0$.
	
	Au moins en caract\'eristique z\'ero, 
	Spencer Bloch a conjectur\'e que  toute surface  connexe $X$ projective et lisse sur le corps des complexes avec $b_{1}=0$ et $b_{2}-\rho=0$, 
	est g\'eom\'etriquement $CH_{0}$-trivale. 
	Cela a \'et\'e \'etabli pour certaines surfaces
	\cite{bkl}, parmi lesquelles les surfaces d'Enriques. 
	
	En caract\'eristique $p$ quelconque, y compris $p=2$, 
	les surfaces d'Enriques sont g\'eom\'etriquement $CH_{0}$-trivales. 
	En  caract\'eristique diff\'erente de 2,
	ceci se voit
	en suivant la d\'emonstration de \cite{bkl},
	soit en utilisant le fait que ces surfaces se rel\`event en des surfaces d'Enriques
	en caract\'eristique z\'ero \cite[Proof of Thm. 1.1]{wlang}, et en utilisant
	la fl\`eche de sp\'ecialisation de Fulton sur les groupes de Chow.
	En caract\'eristique 2, les r\'esultats de rel\`evement et d'unirationalit\'e
	\cite{wlang,blass} donnent le r\'esultat. 
	
	La conjecture de Bloch a \'et\'e \'etablie pour des surfaces de type g\'en\'eral, par exemple pour la surface de Godeaux
	quotient de la surface de Fermat de degr\'e $5$ par $\Z/5\Z$, si $p\neq 5$ (voir \cite[Theorem 1, Remark p. 210]{inose1979rational}).

	\section{Le noyau de $CH^2(X)\to CH^2(\cl{X})$}\label{noyauCH2}
	
	\subsection{Rappels}\label{rappels}
	
	Rappelons d'abord des arguments remontant \`a des travaux de Spencer Bloch, et qu'on peut trouver
	d\'etaill\'es dans \cite{colliot1983torsion} et \cite{colliot1993cycles}.
	
	Pour $X/k$ une vari\'et\'e lisse sur un corps, et $n>0$ entier premier \`a
	la caract\'eristique de $k$, on dispose de suites exactes
	\[ 0 \to H^1_{\on{Zar}}(X,{\mc{K}}_{2})/n \to H^1_{\on{Zar}}(X,{\mc{K}}_{2}/n) \to CH^2(X)[n] \to 0\]
	et si $X$ est de plus int\`egre
	\[ 0 \to H^1_{\on{Zar}}(X,{\mc{H}}^2(\mu_{n}^{\otimes 2})) \to H^3(X,\mu_{n}^{\otimes 2})
	\to H^3(k(X), \mu_{n}^{\otimes 2});\] voir \cite[(3.11), (3.10)]{colliot1993cycles} 
	(la premi\`ere utilise la conjecture de Gersten pour la $K$-th\'eorie, \'etablie par Quillen, la deuxi\`eme utilise la conjecture de Gersten en cohomologie \'etale, \'etablie par Bloch et Ogus).
	Ces deux r\'esultats et le th\'eor\`eme de Merkurjev-Suslin donnent un isomorphisme de  faisceaux
	${\mc{K}}_{2}/n \xrightarrow{\sim} {\mc{H}}^2(\mu_{n}^{\otimes 2})$.
	
	Soient d\'esormais $k=\F$ un corps fini et $X$ une vari\'et\'e  projective, lisse, g\'eom\'e\-tri\-que\-ment connexe
	sur $\F$.  Soit $\ell$ un nombre premier distinct de la
	caract\'eristique de $\F$. En passant \`a la limite directe sur les puissances de $\ell$ on obtient  des suites exactes
	$$ 0 \to H^1_{\on{Zar}}(X,{\mc{K}}_{2})\otimes \Q_{\ell}/\Z_{\ell} \to H^1_{\on{Zar}}(X,{\mc{H}}^2(\Q_{\ell}/\Z_{\ell}(2))) \to
	CH^2(X)\{\ell\}  \to 0$$
	et 
	$$0 \to H^1_{\on{Zar}}(X,{\mc{H}}^2(\Q_{\ell}/\Z_{\ell}(2))) \to H^3(X, \Q_{\ell}/\Z_{\ell}(2)) \to H^3(\F(X), \Q_{\ell}/\Z_{\ell}(2)).$$
	Il r\'esulte des conjectures de Weil prouv\'ees par Deligne que le groupe 
	$H^3(X, \Q_{\ell}/\Z_{\ell}(2))$ est fini; voir \cite[Th\'eor\`eme 2, p.780]{colliot1983torsion}. Ainsi la fl\`eche compos\'ee
	\[H^1_{\on{Zar}}(X,{\mc{K}}_{2})\otimes \Q_{\ell}/\Z_{\ell} \to H^1_{\on{Zar}}(X,{\mc{H}}^2(\Q_{\ell}/\Z_{\ell}(2))) \hookrightarrow 
	H^3(X, \Q_{\ell}/\Z_{\ell}(2))\]
	est nulle. On obtient en fin de compte la suite exacte
	\[ 0 \to CH^2(X)\{\ell\}  \to
	H^3(X, \Q_{\ell}/\Z_{\ell}(2)) \to H^3(\F(X), \Q_{\ell}/\Z_{\ell}(2)).\]
	
	Par passage  \`a la limite  sur les extensions de $\F$ on obtient 
	une suite exacte analogue
	pour $\cl{X}/\cl{\F}$. On a donc un diagramme commutatif de suites exactes:
	\[
	\begin{tikzcd}
		0 \arrow[r] & CH^2(X)\{\ell\}     \arrow[r] \arrow[d]  & H^3(X, \Q_{\ell}/\Z_{\ell}(2)) \arrow[r] \arrow[d] & H^3(\F(X), \Q_{\ell}/\Z_{\ell}(2)) \arrow[d]\\
		0 \arrow[r] & CH^2(\cl{X})\{\ell\} \arrow[r] & H^3(\cl{X}, \Q_{\ell}/\Z_{\ell}(2)) \arrow[r] & H^3(\cl{\F}(X), \Q_{\ell}/\Z_{\ell}(2)).  
	\end{tikzcd}
	\]		
	On en d\'eduit une suite exacte
	\begin{align}
		\label{equiv1}
		0 \to \on{Ker}[CH^2(X)\{\ell\}  \to CH^2(\cl{X})\{\ell\}]  \to  \hspace*{5cm}   \\
		\hspace*{1cm}  \on{Ker} [H^3(X,\Q_{\ell}/\Z_{\ell}(2)) \to  H^3(\cl{X},\Q_{\ell}/\Z_{\ell}(2)) ]
		\to  H^3(\F(X), \Q_{\ell}/\Z_{\ell}(2)),\nonumber \end{align}
	qui est
	le d\'ebut de la suite exacte  (\ref{ctkahn}).
	
	Soit $n\geq 1$ un entier. Si $U\subset X$ est un ouvert, on a la suite spectrale de Hochschild-Serre en cohomologie \'etale
	\[E^{p,q}_2:= H^p(\F, H^q(\cl{U}, \mu_{{\ell}^n}^{\otimes 2}))    \Longrightarrow 
	H^{p+q}(U,\mu_{{\ell}^n}^{\otimes 2}).\]
	Pour chaque ouvert $U\subset X$,  on a donc une suite exacte
	\begin{equation}\label{equiv2}0 \to H^1(\F, H^2({\overline U},\mu_{{\ell}^n}^{\otimes 2})) \to    H^3(U,\mu_{{\ell}^n}^{\otimes 2}) \to H^3({\overline U},\mu_{{\ell}^n}^{\otimes 2})\end{equation} 
	et aussi une suite exacte
	\begin{equation}\label{equiv3}0 \to H^1(\F, H^2(\cl{\F}(X), \mu_{{\ell}^n}^{\otimes 2})) \to H^3(\F(X), \mu_{{\ell}^n}^{\otimes 2}) \to H^3(\cl{\F}(X), \mu_{{\ell}^n}^{\otimes 2}).\end{equation}
	
	Par limite directe sur les coefficients  $\mu_{{\ell}^n}^{\otimes 2}$ pour $n$ tendant vers l'infini, on a 
	donc les suites exactes
	\begin{equation}\label{equiv2'}0 \to H^1(\F, H^2({\overline U},\Q_{\ell}/\Z_{\ell}(2))) \to    H^3(U,\Q_{\ell}/\Z_{\ell}(2)) \to H^3({\overline U},\Q_{\ell}/\Z_{\ell}(2))\end{equation} 
	et
	\begin{equation}\label{equiv3'}0 \to H^1(\F, H^2(\cl{\F}(X), \Q_{\ell}/\Z_{\ell}(2))) \to H^3(\F(X), \Q_{\ell}/\Z_{\ell}(2)) \to H^3(\cl{\F}(X), \Q_{\ell}/\Z_{\ell}(2)).\end{equation}

	On a donc une injection 
	$$ \theta_{\ell} : \on{Ker}[CH^2(X)\{\ell\}  \to CH^2(\cl{X})\{\ell\}] \hookrightarrow  H^1(\F, H^2(\overline{X},\Q_{\ell}/\Z_{\ell}(2)))$$
	Ces deux groupes sont finis, car
	le th\'eor\`eme de Deligne sur les valeurs propres de Frobenius implique que les groupes
	$H^1(\F, H^2(\overline{X},\Q_{\ell}/\Z_{\ell}(2)))$ et $H^3(X,\Q_{\ell}/\Z_{\ell}(2))$ sont finis; voir \cite[Th\'eor\`eme 2, p.780]{colliot1983torsion}. 
	Ce th\'eor\`eme implique aussi que l'on a
	un isomorphisme de groupes finis
	$$\rho_{\ell} : H^1(\F, H^2(\overline{X},\Q_{\ell}/\Z_{\ell}(2))) \xrightarrow{\sim} H^1(\F, H^3(\overline{X},\Z_{\ell}(2))_{\on{tors}}).$$
	En composant avec l'application $\theta_{\ell}$, on obtient une injection de groupes finis
	$$ \phi_{\ell} : \on{Ker}[CH^2(X)\{\ell\}  \to CH^2(\cl{X})\{\ell\}] \hookrightarrow H^1(\F, H^3(\overline{X},\Z_{\ell}(2))_{\on{tors}}).$$
	
	\subsection{La fl\`eche $\phi_{\ell}$ est-elle un isomorphisme ?}
	
	\begin{prop}\label{surj-char} Soit $X$ une vari\'et\'e projective, lisse, g\'eom\'etriquement connexe sur un corps fini $\F$. Soit $\ell$ un premier distinct de la caract\'eristique de $\F$.
		Les \'enonc\'es suivants sont \'equivalents:
		
		\begin{enumerate}[label=(\roman*)]
			\item L'application $\theta_{\ell}$ est un isomorphisme.
			\item L'application $\phi_{\ell}$ est un isomorphisme.
			\item Le compos\'e de l'injection de groupes finis
			\[H^1(\F, H^2(\overline{X},\Q_{\ell}/\Z_{\ell}(2))) \to H^3(X,\Q_{\ell}/\Z_{\ell}(2))\]
			et de l'application
			\[H^3(X,\Q_{\ell}/\Z_{\ell}(2)) \to H^3(\F(X), \Q_{\ell}/\Z_{\ell}(2))\]
			est nul.
			\item Le  noyau de la fl\`eche 
			$H^3(X,\Q_{\ell}/\Z_{\ell}(2)) \to H^3(\overline{X},\Q_{\ell}/\Z_{\ell}(2))$
			a une image nulle dans le groupe $ H^3(\F(X), \Q_{\ell}/\Z_{\ell}(2)).$
			\item La  fl\`eche naturelle
			$H^1(\F, H^2(\overline{X},\Q_{\ell}/\Z_{\ell}(2))) \to H^1(\F, H^2(\cl{\F}(X),\Q_{\ell}/\Z_{\ell}(2)))$
			est nulle.
			\item Pour tout entier $n\geq 1$, le noyau de la fl\`eche
			$H^3(X,\mu_{\ell^n}^{\otimes 2}) \to H^3(\overline{X}, \mu_{\ell^n}^{\otimes 2}) $
			a une image nulle dans 
			$H^3(\F(X),\mu_{\ell^n}^{\otimes 2}).$
			\item La propri\'et\'e pr\'ec\'edente vaut pour tout entier $n$ suffisamment grand.
			
			\item Pour tout entier $n\geq 1$, la fl\`eche naturelle
			\[H^1(\F, H^2(\overline{X}, \mu_{\ell^n}^{\otimes 2}))  
			\to H^1(\F, H^2(\cl{\F}(X), \mu_{\ell^n}^{\otimes 2})) \subset H^3(\F(X), \mu_{\ell^n}^{\otimes 2})    \]
			est nulle.
			\item La propri\'et\'e pr\'ec\'edente vaut pour tout entier $n$ suffisamment grand.
		\end{enumerate} 
	\end{prop}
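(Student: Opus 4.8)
The plan is to prove the equivalence of the nine statements by establishing a cycle of implications, organized around the exact sequences (\ref{equiv1}), (\ref{equiv2'}), (\ref{equiv3'}) recalled in \S\ref{rappels} together with their finite-level analogues (\ref{equiv2}), (\ref{equiv3}). First I observe that statements (i) and (ii) are trivially equivalent, since $\phi_\ell=\rho_\ell\circ\theta_\ell$ with $\rho_\ell$ an isomorphism by Deligne's theorem on the weights of Frobenius; both $\theta_\ell$ and $\phi_\ell$ are injections of finite groups, so being an isomorphism is the same as being surjective. The source of $\theta_\ell$ is, by (\ref{equiv1}), exactly $\on{Ker}[H^3(X,\Q_\ell/\Z_\ell(2))\to H^3(\cl X,\Q_\ell/\Z_\ell(2))]$ intersected with the kernel of the map to $H^3(\F(X),\Q_\ell/\Z_\ell(2))$; and the target $H^1(\F,H^2(\cl X,\Q_\ell/\Z_\ell(2)))$ sits inside $H^3(X,\Q_\ell/\Z_\ell(2))$ via (\ref{equiv2'}) applied to $U=X$. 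Thus $\theta_\ell$ being surjective says precisely that every class in $H^1(\F,H^2(\cl X,\Q_\ell/\Z_\ell(2)))$, viewed in $H^3(X,\Q_\ell/\Z_\ell(2))$, already dies in $H^3(\F(X),\Q_\ell/\Z_\ell(2))$ — this is statement (iii). So (i) $\Leftrightarrow$ (ii) $\Leftrightarrow$ (iii).

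Next I would identify (iii), (iv) and (v) by a diagram chase using (\ref{equiv2'}) for $U=X$ and (\ref{equiv3'}) for the generic point, which fit into a commutative ladder
\begin{equation*}
\begin{tikzcd}
0 \arrow[r] & H^1(\F,H^2(\cl X,\Q_\ell/\Z_\ell(2))) \arrow[r]\arrow[d] & H^3(X,\Q_\ell/\Z_\ell(2)) \arrow[r]\arrow[d] & H^3(\cl X,\Q_\ell/\Z_\ell(2)) \arrow[d]\\
0 \arrow[r] & H^1(\F,H^2(\cl\F(X),\Q_\ell/\Z_\ell(2))) \arrow[r] & H^3(\F(X),\Q_\ell/\Z_\ell(2)) \arrow[r] & H^3(\cl\F(X),\Q_\ell/\Z_\ell(2)).
\end{tikzcd}
\end{equation*}
In this diagram the left vertical map being zero is literally statement (v); since the top row is exact, the image in $H^3(X,\Q_\ell/\Z_\ell(2))$ of $H^1(\F,H^2(\cl X,\cdot))$ is exactly $\on{Ker}[H^3(X,\cdot)\to H^3(\cl X,\cdot)]$, so the composite in (iii) being zero is the same as the image of the kernel in (iv) being zero, and by the commutativity of the left square this is the same as (v). Hence (iii) $\Leftrightarrow$ (iv) $\Leftrightarrow$ (v).

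It remains to pass between the $\Q_\ell/\Z_\ell$-statements (iv), (v) and their $\mu_{\ell^n}^{\otimes 2}$-counterparts (vi), (vii), (viii), (ix). Here I would run the identical diagram chase on the finite-level ladder coming from (\ref{equiv2}) and (\ref{equiv3}) to get (vi) $\Leftrightarrow$ (viii) and (vii) $\Leftrightarrow$ (ix) for each $n$, exactly as above. The implications (vi) $\Rightarrow$ (vii) and (viii) $\Rightarrow$ (ix) are trivial. For the reverse direction and the comparison with the limit statements, I use that $\Q_\ell/\Z_\ell(2)=\varinjlim \mu_{\ell^n}^{\otimes 2}$ and that the relevant groups $H^1(\F,H^2(\cl X,\Q_\ell/\Z_\ell(2)))$ and $H^3(X,\Q_\ell/\Z_\ell(2))$ are finite (Deligne again): a finite group killed by some $\ell^N$ receives everything from the level-$n$ groups for $n\ge N$, so vanishing of the level-$n$ maps for all large $n$ forces vanishing of the colimit map, giving (ix) $\Rightarrow$ (v), while conversely (v), being about the colimit, restricts back to give (viii) for all $n$ by functoriality of $\mu_{\ell^n}^{\otimes2}\to\Q_\ell/\Z_\ell(2)$ composed with the fact that on the finite group $H^1(\F,H^2(\cl X,\mu_{\ell^n}^{\otimes 2}))$ the map factors through the (injective, for $n$ large) map to $H^1(\F,H^2(\cl X,\Q_\ell/\Z_\ell(2)))$ — this last injectivity for $n\gg0$ is where one must be slightly careful, using that $H^2(\cl X,\Z_\ell(1))$ is finitely generated so its torsion is bounded. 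The main subtlety, and the only place requiring genuine care rather than formal diagram-chasing, is precisely this interplay between torsion coefficients and $\Q_\ell/\Z_\ell$-coefficients: one needs the boundedness of $\ell$-torsion in $H^\ast(\cl X,\Z_\ell(\ast))$ to know that ``for all $n$'' and ``for all sufficiently large $n$'' coincide and that the finite-level maps genuinely detect the colimit map. Everything else is a bookkeeping exercise with Hochschild--Serre.
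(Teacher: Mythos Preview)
Your treatment of the equivalences (i)--(v) via the exact sequences (\ref{equiv1}), (\ref{equiv2'}), (\ref{equiv3'}) and the ladder diagram is correct and is exactly what the paper does. Likewise, (vi)$\Leftrightarrow$(viii) and (vii)$\Leftrightarrow$(ix) follow from the finite-level sequences (\ref{equiv2}), (\ref{equiv3}) just as you say, and (ix)$\Rightarrow$(v) by passage to the colimit is fine.

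The genuine gap is in your argument for (v)$\Rightarrow$(viii) ``for all $n$''. You claim the map
\[
H^1(\F,H^2(\cl X,\mu_{\ell^n}^{\otimes 2}))\longrightarrow H^1(\F,H^2(\cl X,\Q_\ell/\Z_\ell(2)))
\]
is injective and then use this to descend the vanishing. But you yourself note this injectivity only holds for $n$ large (bounded torsion of $H^2(\cl X,\Z_\ell)$), so your argument actually proves only (v)$\Rightarrow$(ix), not (v)$\Rightarrow$(viii). This leaves (vi) and (viii) dangling: you have (vi)$\Leftrightarrow$(viii)$\Rightarrow$(ix) but no arrow coming back into (vi) or (viii).

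The missing ingredient is the Merkurjev--Suslin theorem, and this is not a technicality one can avoid. The paper proves (iv)$\Rightarrow$(vi) directly: given $\alpha$ in $\on{Ker}[H^3(X,\mu_{\ell^n}^{\otimes 2})\to H^3(\cl X,\mu_{\ell^n}^{\otimes 2})]$, its image in $H^3(X,\Q_\ell/\Z_\ell(2))$ lies in the analogous kernel, hence by (iv) dies in $H^3(\F(X),\Q_\ell/\Z_\ell(2))$; therefore the image of $\alpha$ in $H^3(\F(X),\mu_{\ell^n}^{\otimes 2})$ dies in $H^3(\F(X),\Q_\ell/\Z_\ell(2))$. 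One now needs to know that
\[
H^3(\F(X),\mu_{\ell^n}^{\otimes 2})\longrightarrow H^3(\F(X),\Q_\ell/\Z_\ell(2))
\]
is \emph{injective for every} $n\ge 1$, and this is precisely a consequence of Merkurjev--Suslin (divisibility of $K_2$ of a field in degrees prime to the characteristic). The paper flags this explicitly in Remarque~\ref{remarquedim2}(ii): passing from the $\Q_\ell/\Z_\ell$ level to every finite level requires Merkurjev--Suslin. Your bounded-torsion argument on $\cl X$ cannot substitute for it, because the injectivity you need is on the function-field side, where no finiteness is available.
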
 
	
	\begin{proof} 
		L'\'equivalence des propri\'et\'es (i) \`a (v) est formelle
		\`a partir des suites exactes mentionn\'ees ci-dessus, qui montrent aussi
		que les propri\'et\'es (vi) et (vii) sont \'equiva\-lentes.
		La propri\'et\'e (vii) implique la propri\'et\'e (v) par limite inductive.
		Que la propri\'et\'e (iv) implique la propri\'et\'e (vi)  est une cons\'equence
		du th\'eor\`eme de Merkurjev-Suslin \cite{merkurjev1982k-cohomology}, qui implique que pour tout corps $k$
		de caract\'eristique diff\'erente de $\ell$, les fl\`eches naturelles
		$H^3(k, \mu_{\ell^{n}}^{\otimes 2}) \to H^3(k, \Q_{\ell}/\Z_{\ell}(2))$
		sont injectives.
	\end{proof}
	\begin{rmk}\label{remarquedim2}
		(i) L'application de groupes finis \[t:H^1(\F, H^3(\overline{X},\Z_{\ell}(2))_{\on{tors}}) \to H^1(\F, H^3(\overline{X},\Z_{\ell}(2)))\]
		est injective mais pas a priori surjective.
		
		Soit $X=C\times_{\F}S$,  o\`u $C$ et $S$ sont vari\'et\'es irr\'eductibles, projectives et lisses de dimension $1$ et $2$, respectivement. Comme la dimension cohomologique de $\F$ est \'egale \`a $1$, 
		on a \[\on{Coker}t\simeq H^1(\F, H^3(\overline{X},\Z_{\ell}(2))/\on{tors}).\]
		Montrons que ce groupe n'est pas nul en g\'en\'eral. Par la formule de K\"unneth $\ell$-adique, $(H^2(\cl{S},\Z_{\ell}(1))/\on{tors})\otimes H^1(\cl{C},\Z_{\ell}(1))$ est un facteur direct galoisien de $H^3(\overline{X},\Z_{\ell}(2))/\on{tors}$, et donc 
		\[H^1(\F,(H^2(\cl{S},\Z_{\ell}(1))/\on{tors})\otimes H^1(\cl{C},\Z_{\ell}(1)))\subset \on{Coker}t.\]
		(On a \'egalit\'e si $b_1(\cl{S})=0$.) Supposons que $b_2(\cl{S})=\rho(\cl{S})$, que $G$ agit trivialement sur  $\on{NS}(\cl{S})$ et que $J(\F)\set{\ell}\neq 0$, $J$ \'etant la jacobienne de $C$. Pour d\'emontrer 
		$\on{Coker}t\neq 0$, il suffit alors de v\'erifier que 
		l'on a
		$H^1(\F, H^1(\cl{C},\Z_{\ell}(1)))\neq 0$. La suite exacte courte
		\[0\to H^1(\F,H^1(\cl{C},\Z_{\ell}(1)))\to H^2(C,\Z_{\ell}(1))\to H^2(\cl{C},\Z_{\ell}(1))^G\to 0\]
		et le fait que l'on ait  $\on{Br}(C)=0$ entra\^{\i}nent
		\[H^1(\F, H^1(\cl{C},\Z_{\ell}(1)))\simeq J(\F)\set{\ell}\neq 0.\]
		
		(ii) On observera que les \'enonc\'es  (iii) \`a (vi) 
		de la proposition \ref{surj-char} 
		se formulent  purement en termes de la cohomologie
		\'etale des vari\'et\'es sur les corps finis, ils  ne font pas intervenir la $K$-th\'eorie alg\'ebrique.
		Mais pour passer des \'enonc\'es au niveau $\Q_{\ell}/\Z_{\ell}(2)$
		\`a tout niveau fini $\mu_{\ell^{n}}^{\otimes 2}$, il faut utiliser le th\'eor\`eme de Merkurjev-Suslin.
		
		(iii) Les \'enonc\'es de la proposition \ref{surj-char}  sont trivialement vrais pour $X$ de dimension~1. 
		Pour  $X$ de dimension $2$, \cite[Remarque 2, p. 790]{colliot1983torsion} implique que l'application de 
		restriction
		\[H^3(X,\Q_{\ell}/\Z_{\ell}(2)) \to H^3(\F(X), \Q_{\ell}/\Z_{\ell}(2))\]
		est nulle. Les
		\'enonc\'es (iii) et (iv)  de la proposition \ref{surj-char} 
		s'en suivent, et donc aussi les autres.

	\end{rmk}

	Le lemme suivant est bien connu.
	\begin{lemma}\label{endomiyata} 
		Soit $k$ un corps parfait dont le groupe de Galois absolu est procyclique. Soit $M$ un $k$-groupe de type multiplicatif fini,
		$\hat{M}$ son groupe des caract\`eres. Supposons que la torsion de $\hat{M}$
		est d'ordre premier \`a la caract\'eristique de $k$. 
		Alors il existe une suite exacte de $k$-groupes alg\'ebriques commutatifs lisses
		\[1 \to M \to P \to Q \to 1\]
		avec $P$ un $k$-tore quasitrivial et $Q$ un $k$-tore facteur direct
		d'un $k$-tore quasitrivial.
	\end{lemma}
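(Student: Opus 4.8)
Le plan est de traduire l'\'enonc\'e en termes de modules galoisiens de type fini et d'y construire une r\'esolution convenable, l'hypoth\`ese sur le groupe de Galois n'intervenant qu'\`a travers le th\'eor\`eme d'Endo--Miyata. Rappelons que le foncteur $N \mapsto \hat{N}$ (groupe des caract\`eres) est une anti-\'equivalence exacte entre la cat\'egorie des $k$-groupes de type multiplicatif de type fini et celle des $\on{Gal}(\cl{k}/k)$-modules continus de type fini; sous celle-ci, les tores quasitriviaux correspondent aux r\'eseaux de permutation, et les facteurs directs de tores quasitriviaux aux r\'eseaux inversibles, c'est-\`a-dire aux facteurs directs de r\'eseaux de permutation. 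Il suffira donc de construire une suite exacte de $\on{Gal}(\cl{k}/k)$-modules
\[ 0 \to \hat{Q} \to \hat{P} \to \hat{M} \to 0 \]
avec $\hat{P}$ de permutation et $\hat{Q}$ inversible: en la dualisant on obtient la suite $1 \to M \to P \to Q \to 1$ recherch\'ee, $M$ \'etant lisse puisque la torsion de $\hat{M}$ est d'ordre premier \`a la caract\'eristique de $k$, et $P$, $Q$ \'etant des tores.

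Comme $\hat{M}$ est fini, l'action de Galois se factorise par un quotient fini $\Gamma$ de $\on{Gal}(\cl{k}/k)$, et $\Gamma$ est cyclique puisque $\on{Gal}(\cl{k}/k)$ est procyclique; on travaillerait donc avec des $\Gamma$-modules, l'inflation pr\'eservant les notions de r\'eseau de permutation et de r\'eseau inversible. On commencerait par choisir une surjection $\hat{P} \to \hat{M}$ de $\Gamma$-modules, avec $\hat{P}$ de permutation, telle que $\hat{P}^{\Gamma'} \to \hat{M}^{\Gamma'}$ soit surjective pour tout sous-groupe $\Gamma' \subseteq \Gamma$: c'est loisible, car $\Gamma$ n'a qu'un nombre fini de sous-groupes et il suffit, pour chacun d'eux, d'adjoindre \`a $\hat{P}$ des facteurs directs $\Z[\Gamma/\Gamma']$ relevant un syst\`eme fini de g\'en\'erateurs de $\hat{M}^{\Gamma'}$. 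En posant $\hat{Q} := \on{Ker}(\hat{P} \to \hat{M})$, la suite exacte de cohomologie galoisienne
\[ \hat{P}^{\Gamma'} \to \hat{M}^{\Gamma'} \to H^1(\Gamma',\hat{Q}) \to H^1(\Gamma',\hat{P}), \]
dont la premi\`ere fl\`eche est surjective par construction et dont le dernier terme est nul (un r\'eseau de permutation est acyclique en degr\'e $1$ sur tout sous-groupe, par le lemme de Shapiro), donne $H^1(\Gamma',\hat{Q}) = 0$ pour tout $\Gamma' \subseteq \Gamma$: le r\'eseau $\hat{Q}$ est coflasque.

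Il resterait \`a voir que $\hat{Q}$ est inversible, et c'est l\`a le point essentiel, o\`u intervient l'hypoth\`ese de procyclicit\'e. Comme $\Gamma$ est cyclique, sa cohomologie de Tate est $2$-p\'eriodique, de sorte que $\hat{H}^{-1}(\Gamma',\hat{Q}) \simeq \hat{H}^{1}(\Gamma',\hat{Q}) = 0$ pour tout $\Gamma' \subseteq \Gamma$, et $\hat{Q}$ est donc \'egalement flasque. Le th\'eor\`eme d'Endo et Miyata --- sur un groupe fini dont tous les sous-groupes de Sylow sont cycliques, en particulier sur un groupe cyclique, tout r\'eseau flasque est inversible --- permet alors de conclure. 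On s'attend \`a ce que ce recours \`a Endo--Miyata soit le seul ingr\'edient non formel de l'argument, tout le reste relevant de d\'evissages standard. En dualisant la suite $0 \to \hat{Q} \to \hat{P} \to \hat{M} \to 0$, on obtient finalement la suite exacte $1 \to M \to P \to Q \to 1$ de $k$-groupes commutatifs lisses voulue: le tore $P$ est quasitrivial, son r\'eseau de caract\`eres $\hat{P}$ \'etant de permutation, et le tore $Q$ est facteur direct d'un tore quasitrivial, son r\'eseau de caract\`eres $\hat{Q}$ \'etant inversible.
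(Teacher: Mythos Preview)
Your proposal is correct and follows essentially the same approach as the paper: construct a coflasque resolution $0 \to \hat{Q} \to \hat{P} \to \hat{M} \to 0$ with $\hat{P}$ of permutation, then invoke Endo--Miyata to conclude that $\hat{Q}$ is invertible, and finish by Cartier duality. The only cosmetic differences are that the paper cites \cite[Lemma 1.1]{endo1975classification} as a black box for the coflasque resolution (you spell out the standard construction), and cites \cite[Theorem 1.5]{endo1975classification} directly in the form ``coflasque $\Rightarrow$ invertible over a procyclic group'' (you insert the periodicity step coflasque $\Rightarrow$ flasque before quoting the flasque version); one small slip is that $\hat{M}$ is only de type fini, not fini, but your reduction to a finite cyclic quotient $\Gamma$ goes through verbatim since a continuous action on a finitely generated discrete module factors through a finite quotient.
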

	\begin{proof} Rappelons qu'un $k$-tore $T$ est appel\'e quasitrivial si son groupe de caract\`eres
		$\hat{T}$ est un module galoisien de permutation
		et qu'il est appel\'e coflasque si $H^1(K,\hat{T})=0$ pour toute extension de corps   $K/k$. D'apr\`es  Endo et Miyata \cite[Lemma 1.1]{endo1975classification}, pour  tout   $k$-groupe de type multiplicatif $M$
		il existe une suite exacte de modules galoisiens $0 \to \hat{Q} \to \hat{P} \to \hat{M} \to 0$ avec $\hat{P}$ de permutation et $\hat{Q}$ coflasque.
		Sous l'hypoth\`ese que le groupe de Galois absolu est procyclique, tout module galoisien sans torsion de type fini qui est coflasque est un facteur
		direct d'un module de permutation (Endo et Miyata, \cite[Theorem 1.5]{endo1975classification}). Par dualit\'e de Cartier, on conclut.
	\end{proof}

	\begin{prop}\label{mult} 
		Soit $X$ une vari\'et\'e projective et lisse, g\'eom\'etriquement connexe
		sur un corps fini $\F$.  Soient $\F(X)$ son corps des fonctions rationnelles, et \[\on{Br}_{1}(X):= {\rm Ker}[\on{Br}(X) \to \on{Br}(\cl{X})].\]
		Supposons que le module galoisien d\'efini par le groupe de N\'eron-Severi g\'eom\'etrique $\on{NS}(\cl{X})$  de $X$
		est facteur direct d'un module de permutation (en particulier, $\on{NS}(\cl{X})$ est sans-torsion). Alors :
		
		(a) On a $\on{Br}_{1}(X_{\F'})=0$ pour toute extension finie $\F'$ de $\F$.
		
		(b) Pour tout $\F$-tore $R$  facteur direct d'un $\F$-tore quasitrivial, 
		on a 
		$${\rm Ker} [H^2(X,R) \to H^2(\cl{X},R)] = 0.$$
		
		(c) Pour tout   $\F$-groupe de type multiplicatif fini $M$
		d'ordre premier \`a la caract\'eristique de $\F$, 
		l'image de ${\rm Ker} [H^2(X,M) \to H^2(\cl{X},M)]$ dans
		$H^2(\F(X),M)$ est nulle.\footnote{M\^{e}me si l'action du groupe de Galois absolu sur $\on{NS}(\cl{X})$
			est triviale,
			la restriction $\on{NS}(\cl{X})$ sans torsion
			est n\'ecessaire.
			Soit $X/\F$ une surface d'Enriques sur un corps fini $\F$
			de caract\'eristique impaire. 
			Supposons $\on{Pic}(X)=\on{Pic}(\cl{X})$. Pour $M=\mu_{2}$, l'\'enonc\'e (c)
			est en d\'efaut.}

		(d) Pour tout   $\F$-groupe de type multiplicatif fini $M$
		d'ordre premier \`a la caract\'eristique de $\F$, 
		l'image de $H^1(\F, H^1(\cl{X},M))$ dans $H^1(\F, H^1(\cl{\F}(X),M))$
		est nulle.
	\end{prop}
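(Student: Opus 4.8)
The four assertions are proved in order, (a)$\Rightarrow$(b)$\Rightarrow$(c)$\Rightarrow$(d); the inputs beyond routine Hochschild--Serre machinery are Lemme~\ref{endomiyata}, Lang's theorem over a finite field, and the equality $\on{cd}(\F)=1$. For (a) I would run the Hochschild--Serre spectral sequence $E_2^{p,q}=H^p(\F,H^q(\cl X,\G_m))\Rightarrow H^{p+q}(X,\G_m)$. Since $X$ is projective and geometrically connected, $H^0(\cl X,\G_m)=\cl\F^*$ is torsion, so $H^p(\F,\cl\F^*)=0$ for $p\ge 2$; in particular $E_2^{2,0}=\on{Br}(\F)=0$, and the usual chase identifies $\on{Br}_1(X)$ with $H^1(\F,\on{Pic}(\cl X))$. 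From $0\to\on{Pic}^0(\cl X)\to\on{Pic}(\cl X)\to\on{NS}(\cl X)\to 0$ one gets $H^1(\F,\on{Pic}^0(\cl X))=0$ by Lang's theorem (the reduced Picard scheme is an abelian variety over the perfect field $\F$, and Frobenius minus the identity is onto on its $\cl\F$-points), while $H^1(\F,\on{NS}(\cl X))=0$ because over a finite field the $H^1$ of a direct factor of a permutation Galois module vanishes (Shapiro's lemma and $H^1(\hat\Z,\Z)=0$). Hence $\on{Br}_1(X)=0$; over a finite extension $\F'/\F$ one has $\cl{X_{\F'}}=\cl X$ and $\on{NS}(\cl X)$ remains a direct factor of a permutation $\on{Gal}(\cl\F/\F')$-module, so the same argument gives $\on{Br}_1(X_{\F'})=0$.

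For (b), a quasitrivial torus is $R=\on{Res}_{E/\F}\G_m$ for a finite \'etale $\F$-algebra $E=\prod_iL_i$, so $H^2(X,R)=\prod_i\on{Br}(X_{L_i})$, and the kernel of the restriction to $H^2(\cl X,R)$ is $\prod_i\on{Br}_1(X_{L_i})$, which is $0$ by (a) applied over the $L_i$; since \'etale cohomology is additive in the coefficient group, the kernel attached to a direct factor of $R$ is a direct summand of this, hence also $0$. For (c), Lemme~\ref{endomiyata} applies ($\on{Gal}(\cl\F/\F)$ is procyclic, $|M|$ is prime to $p$) and gives $1\to M\to P\to Q\to 1$ with $P$ quasitrivial and $Q$ a direct factor of a quasitrivial torus. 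If $\xi\in\on{Ker}[H^2(X,M)\to H^2(\cl X,M)]$, its image in $H^2(X,P)$ becomes trivial in $H^2(\cl X,P)$, hence lies in $\on{Ker}[H^2(X,P)\to H^2(\cl X,P)]=0$ by (b); so $\xi=\delta(\eta)$ with $\eta\in H^1(X,Q)$ and $\delta$ the connecting map. Restricting to $\Spec \F(X)$ and using $H^1(\F(X),Q)=0$ --- because $Q_{\F(X)}$ is a direct factor of $\on{Res}_{(E\otimes_\F\F(X))/\F(X)}\G_m$, whose $H^1$ is the Picard group of a finite product of fields --- we get $\eta|_{\F(X)}=0$, so the image of $\xi$ in $H^2(\F(X),M)$ equals $\delta(\eta|_{\F(X)})=0$.

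For (d), I would run the Hochschild--Serre spectral sequences for $M$ over $X$ and over $\F(X)$; using $\on{cd}(\F)=1$ and the finiteness of $M(\cl\F)$ to kill $E_2^{2,0}$ and $E_2^{3,0}$, one identifies $\on{Ker}[H^2(X,M)\to H^2(\cl X,M)]$ with $H^1(\F,H^1(\cl X,M))$ and, compatibly with restriction to the generic point, $\on{Ker}[H^2(\F(X),M)\to H^2(\cl\F(X),M)]$ with $H^1(\F,H^1(\cl\F(X),M))$; under these identifications the map of the statement is induced by the restriction $H^2(X,M)\to H^2(\F(X),M)$, so (d) is exactly the reformulation of (c).

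I expect the main obstacle to be part (c): one must route the finite coefficients $M$ through a short exact sequence of tori in order to bring (b), and hence (a) together with the hypothesis on $\on{NS}(\cl X)$, to bear, and it is Lemme~\ref{endomiyata} --- whose proof rests on the procyclicity of $\on{Gal}(\cl\F/\F)$ --- that makes such a sequence available; everything else is Hochschild--Serre bookkeeping, exploiting repeatedly that $\on{cd}(\F)=1$. The hypothesis that $\on{NS}(\cl X)$ is a direct factor of a permutation module is used only once, in (a); the Enriques surface of the footnote shows that torsion in $\on{NS}(\cl X)$, even with trivial Galois action, genuinely obstructs (c).
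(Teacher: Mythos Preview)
Your proposal is correct and follows essentially the same route as the paper: Hochschild--Serre plus Lang for (a), Shapiro/Weil restriction to deduce (b) from (a), Lemme~\ref{endomiyata} and the vanishing $H^1(\F(X),Q)=0$ for (c), and the low-degree Hochschild--Serre identification for (d). The only cosmetic difference is in (c): the paper argues via the injectivity of $H^2(\F(X),M)\hookrightarrow H^2(\F(X),P)$, whereas you lift $\xi$ to $\eta\in H^1(X,Q)$ and then restrict; these are two phrasings of the same diagram chase.
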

	
	\begin{proof} 
		Pour une vari\'et\'e projective   $X$ g\'eom\'etriquement int\`egre
		sur un corps $k$ et $\cl{k}$ une cl\^{o}ture s\'eparable de $k$, et $\cl{X}=X \times_{k} \cl{k}$,
		on a la suite exacte
		$$ \on{Br}(k) \to \on{Br}_1(X) \to H^1(k, \on{Pic}(\cl{X})).$$
		Si $X$ de plus est lisse, on a une suite exacte de modules galoisiens
		$$ 0 \to J_{X} (\cl{k}) \to \on{Pic}(\cl{X}) \to \on{NS}(\cl{X}) \to 0,$$
		o\`u $J_{X}= \on{Pic}_{X/k}^0$ est la vari\'et\'e de Picard, qui est
		une vari\'et\'e ab\'elienne, et $\on{NS}(\cl{X})$ est le groupe de N\'eron-Severi g\'eom\'etrique.
		Pour $k=\F$ un corps fini, 
		on a $H^1(\F,J_X)=0$ (Lang) et $\on{Br}(\F)=0$.
		Sous l'hypoth\`ese faite sur le groupe de N\'eron-Severi,
		on a $H^1(\F, \on{NS}(\cl{X}))=0$. On a donc  $\on{Br}_{1}(X)=0$. Comme les hypoth\`eses
		sont inchang\'ees, ceci vaut encore sur toute extension finie 
		de $\F$. 
		Ceci donne (a) et (b). 
		
		Soit
		\[1 \to M \to P \to Q \to 1\]
		une suite exacte donn\'ee par le lemme \ref{endomiyata}. 
		Comme $Q$ est facteur direct d'un tore quasitrivial,
		le lemme de Shapiro et le th\'eor\`eme 90 de Hilbert
		donnent $H^1(\F(X), Q)=0$. On a donc une injection 
		$H^2(\F(X),M) \hookrightarrow H^2(\F(X),P)$.
		
		On a   un diagramme commutatif
		\[
		\begin{tikzcd}
			H^2(X,M) \arrow[r] \arrow[d] &  H^2(X,P) \arrow[d] \\
			H^2(\F(X),M) \arrow[r] & H^2(\F(X),P).
		\end{tikzcd}	 
		\]
		D'apr\`es (b), le noyau de  ${\rm Ker}[H^2(X,M) \to H^2(\cl{X},M)]$
		a une image nulle dans $H^2(X,P)$.
		Il r\'esulte alors du diagramme que l'image de 
		${\rm Ker}[H^2(X,M) \to H^2(\cl{X},M)]$
		dans $H^2(\F(X),M)$ est nulle, soit (c).  
		L'\'enonc\'e (d) r\'esulte alors de la suite des termes de bas degr\'e 
		des suites spectrales de Hochschild-Serre.
	\end{proof}

	\begin{lemma}\label{torsionH2H3} 
		Soit $X$ une vari\'et\'e projective, lisse, connexe sur un corps $k$  s\'eparablement clos.
		Soit $\ell$ premier distinct de la caract\'eristique de $k$.
		
		(i) Si le groupe $\on{NS}(X)$ est sans $\ell$-torsion, alors pour tout entier $n>0$
		le groupe $H^1(X,\Z/{\ell}^n)$ est un $\Z/\ell^n$-module libre.
		
		(ii) Si de plus  le groupe $H^3(X, \Z_{\ell})$ est sans torsion, alors pour tout entier $n>0$
		le groupe $H^2(X,\Z/{\ell}^n)$ est un $\Z/\ell^n$-module libre.
	\end{lemma}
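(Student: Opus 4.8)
Le plan est de tout ramener \`a la suite de Kummer et aux suites de coefficients universels reliant la cohomologie \`a coefficients $\Z/\ell^n$ \`a la cohomologie $\ell$-adique. Comme $k$ est s\'eparablement clos, on fixe une racine primitive $\ell^n$-i\`eme de l'unit\'e, ce qui identifie $\mu_{\ell^n}^{\otimes j}$ \`a $\Z/\ell^n$ pour tout $j$, et en particulier $H^i(X,\Z/\ell^n)\simeq H^i(X,\mu_{\ell^n})$ pour tout $i$. On se servira de deux remarques \'el\'ementaires : un $\Z/\ell^n$-module de type fini est libre si et seulement s'il est isomorphe \`a $(\Z/\ell^n)^r$ ; et pour un $\Z_{\ell}$-module de type fini $N$, le quotient $N/\ell^n$ est un $\Z/\ell^n$-module libre pour tout $n\geq 1$ si et seulement si $N$ est sans torsion.

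Pour (i), je partirais de la suite de Kummer $1\to\mu_{\ell^n}\to\G_{\on{m}}\xrightarrow{\ell^n}\G_{\on{m}}\to 1$ en cohomologie \'etale. Comme $X$ est projective, lisse et connexe sur $k$ s\'eparablement clos, $X$ est alors int\`egre et $H^0(X,\G_{\on{m}})$ est le groupe multiplicatif d'une extension finie purement ins\'eparable de $k$, donc encore s\'eparablement close, et par cons\'equent $\ell$-divisible ; on obtient ainsi un isomorphisme $H^1(X,\mu_{\ell^n})\xrightarrow{\sim}\on{Pic}(X)[\ell^n]$. J'appliquerais ensuite le lemme du serpent \`a la multiplication par $\ell^n$ dans la suite exacte $0\to\on{Pic}^0(X)\to\on{Pic}(X)\to\on{NS}(X)\to 0$, o\`u $\on{Pic}^0(X)=A(k)$ pour une vari\'et\'e ab\'elienne $A/k$ ; le groupe $A(k)$ est $\ell$-divisible et $A(k)[\ell^n]\simeq(\Z/\ell^n)^{2\dim A}$ ($\ell$ \'etant distinct de la caract\'eristique de $k$, qui est s\'eparablement clos). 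L'hypoth\`ese que $\on{NS}(X)$ est sans $\ell$-torsion, soit $\on{NS}(X)[\ell^n]=0$, donne alors $\on{Pic}(X)[\ell^n]\simeq A(k)[\ell^n]$, qui est libre sur $\Z/\ell^n$ ; d'o\`u (i).

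Pour (ii), le point essentiel est de montrer que $H^2(X,\Z_{\ell}(1))$ est sans torsion. Comme $\on{Pic}^0(X)$ est $\ell$-divisible, on a $\on{Pic}(X)/\ell^n=\on{NS}(X)/\ell^n$, et la suite de Kummer fournit des suites exactes de groupes finis $0\to\on{NS}(X)/\ell^n\to H^2(X,\mu_{\ell^n})\to\on{Br}(X)[\ell^n]\to 0$. En passant \`a la limite projective sur $n$ — licite car tous les termes sont finis —, on obtient $0\to\on{NS}(X)\otimes\Z_{\ell}\to H^2(X,\Z_{\ell}(1))\to T_{\ell}\on{Br}(X)\to 0$, o\`u $T_{\ell}\on{Br}(X):=\varprojlim_n\on{Br}(X)[\ell^n]$ est sans torsion. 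Donc $H^2(X,\Z_{\ell}(1))_{\on{tors}}\simeq(\on{NS}(X)\otimes\Z_{\ell})_{\on{tors}}=\on{NS}(X)\{\ell\}$, qui est nul par l'hypoth\`ese de (i) (le groupe $\on{NS}(X)$ \'etant de type fini). Enfin la suite de coefficients universels $0\to H^2(X,\Z_{\ell}(1))/\ell^n\to H^2(X,\mu_{\ell^n})\to H^3(X,\Z_{\ell}(1))[\ell^n]\to 0$ a son terme de droite nul, puisque $H^3(X,\Z_{\ell})$, donc aussi $H^3(X,\Z_{\ell}(1))$, est sans torsion ; ainsi $H^2(X,\Z/\ell^n)\simeq H^2(X,\Z_{\ell}(1))/\ell^n$ est libre sur $\Z/\ell^n$.

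Je ne m'attends pas \`a un v\'eritable obstacle, l'\'enonc\'e \'etant, comme le disent les auteurs, bien connu ; le seul endroit demandant un peu de soin est la v\'erification de l'exactitude du passage \`a la limite projective et l'identification $\on{Pic}^0(X)=A(k)$ avec $A(k)$ divisible sur un corps seulement s\'eparablement clos (tous deux standard). On pourrait d'ailleurs \'eviter la limite projective en montrant directement, \`a partir de $H^3(X,\Z_{\ell})$ sans torsion et de la forme colimite de la suite de Kummer, que $\on{Br}(X)\{\ell\}$ est $\ell$-divisible : alors $\on{Br}(X)[\ell^n]$ est libre, et la suite de Kummer pr\'ec\'edente pr\'esente $H^2(X,\Z/\ell^n)$ comme extension de $\Z/\ell^n$-modules libres, donc libre.
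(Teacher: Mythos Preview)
Your proof is correct and follows essentially the same route as the paper: Kummer sequence for (i), and for (ii) the identification $\on{NS}(X)\{\ell\}\simeq H^2(X,\Z_{\ell}(1))\{\ell\}$ followed by the universal-coefficient short exact sequence with $H^3(X,\Z_{\ell})$ torsion-free. The only difference is cosmetic: the paper quotes the isomorphism $\on{NS}(X)\{\ell\}\simeq H^2(X,\Z_{\ell}(1))\{\ell\}$ directly from \cite{grothendieck1968brauer3}, whereas you rederive it via the Kummer sequence and the inverse limit.
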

	\begin{proof}   Ceci r\'esulte d'arguments de passage \`a la limite
		dans les suites de Kummer  \cite[\S 8]{grothendieck1968brauer3}.
		La suite de Kummer pour la cohomologie \'etale donne
		un isomorphisme $H^1(X,\mu_{\ell^n} ) \simeq {\rm Pic}(X)[\ell^n]$. 
		Le groupe ${\rm Pic}(X)$ est une extension du groupe $\on{NS}(X)$ par
		le groupe des $k$-points de la vari\'et\'e de Picard $J$ de $X$.
		Sous l'hypoth\`ese de (i), on a donc $H^1(X,\mu_{\ell^n} ) \simeq J[\ell^n]$,
		qui est libre sur $\Z/\ell^n$.
		On sait que l'on a $\on{NS}(X)\{\ell\} \simeq H^2(X, \Z_{\ell}(1)) \{\ell\}$.
		Sous l'hypoth\`ese de (i), le $\Z_{\ell}$-module de type fini  $H^2(X,\Z_{l})$
		est donc un $\Z_{\ell}$-module libre.
		Sous l'hypoth\`ese  suppl\'ementaire de (ii), on a
		$H^2(X,\Z_{\ell})/ n \simeq H^2(X, \Z/{\ell^n})$.
	\end{proof}

	\begin{thm}\label{noyau} 
		Soient $\F$ un corps fini et $\ell$ un premier distinct de la caract\'eristique de $\F$. Soient $Y$ et $Z$ deux vari\'et\'es g\'eom\'etriquement connexes, projectives et lisses sur $\F$, de dimensions quelconques.  Supposons que le module galoisien d\'efini par le groupe de N\'eron-Severi g\'eom\'etrique $\on{NS}(\cl{Y})$  de $Y$ est facteur direct d'un module de permutation. Supposons de plus que le groupe $H^3(\cl{Y},\Z_{\ell})$ est sans torsion.
		Si les propri\'et\'es \'equivalentes de la proposition \ref{surj-char} valent pour $Y$ et pour $Z$, alors elles valent pour $X:=Y\times Z$. 
	\end{thm}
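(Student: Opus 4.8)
The plan is to establish property (viii) of Proposition~\ref{surj-char} for $X=Y\times Z$: for every integer $n\ge 1$, the natural map
\[
\alpha_n\colon H^1(\F,H^2(\cl X,\mu_{\ell^n}^{\otimes 2}))\longrightarrow H^1(\F,H^2(\cl{\F}(X),\mu_{\ell^n}^{\otimes 2}))\subset H^3(\F(X),\mu_{\ell^n}^{\otimes 2})
\]
vanishes. The starting point is a Künneth decomposition at finite level. Since $\on{NS}(\cl Y)$ is torsion-free and $H^3(\cl Y,\Z_\ell)$ is torsion-free, Lemma~\ref{torsionH2H3} shows that the $\Z/\ell^n$-modules $H^1(\cl Y,\mu_{\ell^n})$ and $H^2(\cl Y,\mu_{\ell^n})$ are free. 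Every Tor-term appearing in the Künneth formula for $H^2(\cl X,\mu_{\ell^n}^{\otimes 2})$ involves some $H^i(\cl Y,\mu_{\ell^n})$ with $i\le 2$ and therefore vanishes, so one obtains a decomposition of $G$-modules
\[
H^2(\cl X,\mu_{\ell^n}^{\otimes 2})=A_n\oplus B_n\oplus C_n,
\]
where $A_n=\on{pr}_Y^{*}H^2(\cl Y,\mu_{\ell^n}^{\otimes 2})$, $B_n=\on{pr}_Z^{*}H^2(\cl Z,\mu_{\ell^n}^{\otimes 2})$, and $C_n$ is the image of the external cup product $H^1(\cl Y,\mu_{\ell^n})\otimes_{\Z/\ell^n}H^1(\cl Z,\mu_{\ell^n})\to H^2(\cl X,\mu_{\ell^n}^{\otimes 2})$, which is injective on this summand. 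It suffices to show that $\alpha_n$ kills each of $H^1(\F,A_n)$, $H^1(\F,B_n)$, $H^1(\F,C_n)$.

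For the pure summand $A_n$: the projection $\on{pr}_Y\colon X\to Y$ induces a morphism between the Hochschild--Serre/generic-point exact sequences for $Y$ and for $X$, and the generic point of $\cl X$ maps to the generic point of $\cl Y$, so the restriction of $\alpha_n$ to $H^1(\F,A_n)$ factors through the map $H^1(\F,H^2(\cl Y,\mu_{\ell^n}^{\otimes 2}))\to H^1(\F,H^2(\cl{\F}(Y),\mu_{\ell^n}^{\otimes 2}))$. That map is zero because property (viii) holds for $Y$. The symmetric argument, via $\on{pr}_Z$ and property (viii) for $Z$, handles $B_n$. (This is the only place where the hypothesis that the equivalent properties hold for $Y$ and for $Z$ is used; no condition on $\on{NS}(\cl Z)$ is needed.)

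The main point is the mixed summand $C_n$. Put $W_n:=H^1(\cl Z,\mu_{\ell^n})$, a finite $G$-module, and let $M$ be the finite $\F$-group scheme of multiplicative type, of $\ell$-power order (hence of order prime to $\on{char}\F$), whose character group is $\on{Hom}(W_n,\Z/\ell^n)$; over $\cl{\F}$ one has $M_{\cl{\F}}\cong \underline{W_n}\otimes_{\Z/\ell^n}\mu_{\ell^n}$ compatibly with the $G$-actions. Because $H^2(\cl Y,\mu_{\ell^n})$ is $\Z/\ell^n$-free, the universal coefficient sequence gives a $G$-isomorphism $H^1(\cl Y,M_{\cl{\F}})\cong H^1(\cl Y,\mu_{\ell^n})\otimes_{\Z/\ell^n}W_n=C_n$. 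Using multiplicativity of restriction and of the cup product, one checks that the composite $C_n=H^1(\cl Y,M_{\cl{\F}})\hookrightarrow H^2(\cl X,\mu_{\ell^n}^{\otimes 2})\to H^2(\cl{\F}(X),\mu_{\ell^n}^{\otimes 2})$ factors through the restriction-to-the-generic-point map $H^1(\cl Y,M_{\cl{\F}})\to H^1(\cl{\F}(Y),M_{\cl{\F}(Y)})$. Hence the restriction of $\alpha_n$ to $H^1(\F,C_n)$ factors through $H^1(\F,H^1(\cl Y,M_{\cl{\F}}))\to H^1(\F,H^1(\cl{\F}(Y),M_{\cl{\F}(Y)}))$, and this last map is zero by Proposition~\ref{mult}(d) applied to the variety $Y$ and the group $M$ --- the hypothesis that $\on{NS}(\cl Y)$ is a direct summand of a permutation module being exactly what that proposition requires.

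Combining the three cases gives $\alpha_n=0$ for all $n$, i.e.\ property (viii) --- and therefore every property of Proposition~\ref{surj-char} --- holds for $X$. The delicate part is the third step: the torsion-freeness hypotheses on $\cl Y$ (through Lemma~\ref{torsionH2H3}) are used precisely to identify the mixed Künneth summand $C_n$ with $H^1(\cl Y,M_{\cl{\F}})$ for a finite multiplicative-type group scheme $M$ built out of $H^1(\cl Z,\mu_{\ell^n})$, and only then does Proposition~\ref{mult}(d) for $Y$ dispose of its contribution.
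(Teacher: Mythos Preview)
Your argument is correct and follows essentially the same route as the paper's proof: K\"unneth decomposition of $H^2(\cl X,\mu_{\ell^n}^{\otimes 2})$ (using Lemma~\ref{torsionH2H3} to kill the Tor terms), the two pure summands handled via the projections and the hypothesis on $Y$ and $Z$, and the mixed summand identified with $H^1(\cl Y,\mu_{\ell^n}\otimes H^1(\cl Z,\mu_{\ell^n}))$ so that Proposition~\ref{mult}(d) applies. The only cosmetic difference is that the paper, rather than invoking a universal-coefficient identification of $C_n$ with $H^1(\cl Y,M_{\cl\F})$ on $\cl Y$, writes down an explicit commutative square and checks that the cup-product map $H^1(\cl\F(Y),\mu_{\ell^n})\otimes M_n\to H^1(\cl\F(Y),\mu_{\ell^n}\otimes M_n)$ is an isomorphism at the function-field level (via Kummer theory); your UCT argument using the freeness of $H^2(\cl Y,\mu_{\ell^n})$ achieves the same factorization one step earlier.
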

	
	\begin{proof} 
		Soit $n\geq 1$ un entier. Soient $U \subset Y$ et $V \subset Z$ des ouverts non vides.
		Soit $W: = U\times V$.
		On a un homomorphisme Galois-\'equivariant
		\begin{equation}
			H^2(\cl{U},\mu_{\ell^n}^{\otimes 2})\oplus [ H^1(\cl{U},\mu_{\ell^n})\otimes H^1(\cl{V},\mu_{\ell^n})]\oplus H^2(\cl{V},\mu_{\ell^n}^{\otimes 2})  \rightarrow H^2(\cl{W},\mu_{\ell^n}^{\otimes 2}).\end{equation}
		Les fl\`eches $H^2(\cl{U},\mu_{\ell^n}^{\otimes 2}) \to  H^2(\cl{W},\mu_{\ell^n}^{\otimes 2})$
		et  $H^2(\cl{V},\mu_{\ell^n}^{\otimes 2}) \to  H^2(\cl{W},\mu_{\ell^n}^{\otimes 2})$ sont induites par 
		les projections naturelles. La fl\`eche
		$ H^1(\cl{U},\mu_{\ell^n})\otimes H^1(\cl{V},\mu_{\ell^n}) \to H^2(\cl{W},\mu_{\ell^n}^{\otimes 2})$
		est induite par les projections naturelles et le cup-produit.

		D'apr\`es le lemme \ref{torsionH2H3}, les hypoth\`eses sur $\cl{Y}$ impliquent que les groupes de cohomologie
		$H^{i}(\cl{Y}, \Z/\ell^n)$ pour $i=0,1,2$ sont libres sur $\Z/\ell^n$.
		Le th\'eor\`eme  \cite[Chap. VI, Thm. 8.13]{milneetale} implique alors que
		l'homomorphisme 
		\begin{equation}\label{kuenferme}
			H^2(\cl{Y},\mu_{\ell^n}^{\otimes 2})\oplus [ H^1(\cl{Y},\mu_{\ell^n})\otimes H^1(\cl{Z},\mu_{\ell^n})]\oplus H^2(\cl{Z},\mu_{\ell^n}^{\otimes 2})  \rightarrow   H^2(\cl{X},\mu_{\ell^n}^{\otimes 2}).
		\end{equation}
		est un isomorphisme.

		La projection $X\to Z$ induit un diagramme commutatif d'homomorphismes
		\[
		\begin{tikzcd}
			H^1(\F,H^2(\cl{Z},\mu_{\ell^n}^{\otimes 2})) \arrow[d] \arrow[r]  &  H^3(Z,\mu_{\ell^n}^{\otimes 2}) \arrow[r] \arrow[d] & H^3(\F(Z),\mu_{\ell^n}^{\otimes 2}) \arrow[d] \\
			H^1(\F,H^2(\cl{X},\mu_{\ell^n}^{\otimes 2}))  \arrow[r]  &  H^3(X,\mu_{\ell^n}^{\otimes 2}) \arrow[r] & H^3(\F(X),\mu_{\ell^n}^{\otimes 2})   
		\end{tikzcd}	
		\]
		Par hypoth\`ese, la compos\'ee $H^1(\F,H^2(\cl{Z},\mu_{\ell^n}^{\otimes 2})) \to H^3(\F(Z),\mu_{\ell^n}^{\otimes 2}) $ est nulle.  Ainsi  la fl\`eche compos\'ee  $$H^1(\F,H^2(\cl{Z},\mu_{\ell^n}^{\otimes 2}))
		\to 		H^1(\F,H^2(\cl{X},\mu_{\ell^n}^{\otimes 2})) \to H^3(\F(X), \mu_{\ell^n}^{\otimes 2})$$
		est nulle. Le m\^eme argument vaut    avec 
		avec $Y$ en place de $Z$.		 		
		
		Pour \'etablir l'\'enonc\'e sur $X$, il reste \`a voir que, pour  tout $n\geq 1$,
		la fl\`eche  de modules galoisiens \[H^1(\cl{Y},\mu_{\ell^n})\otimes H^1(\cl{Z},\mu_{\ell^n})\to H^2(\cl{X},\mu_{\ell^n}^{\otimes 2})\to H^2(\cl{\F}(X),\mu_{\ell^n}^{\otimes 2})\] 
		induit une application nulle par application de $H^1(\F,-)$.

		Notons $M_{n}:= 	H^1(\cl{Z}, \mu_{\ell^n}) $.

		Consid\'erons le diagramme commutatif d'homomorphismes galoisiens
		\[
		\adjustbox{scale=0.8,center}{ 
			\begin{tikzcd}
				H^2(\cl{X},\mu_{\ell^n}^{\otimes 2})    \arrow[d] 
				&  \arrow[l] H^1(\cl{Y}, \mu_{\ell^n}) \otimes 	H^1(\cl{Z}, \mu_{\ell^n}) =  H^1(\cl{Y}, \mu_{\ell^n}) \otimes  M_{n}  \arrow[r] \arrow[d]  
				&   H^1(\cl{Y},  \mu_{\ell^n}    \otimes M_{n}) \arrow[d]  \\		
				H^2(\cl{\F}(X) ,\mu_{\ell^n}^{\otimes 2} )  
				&  \arrow[l]  H^1(\cl{\F}(Y), 	\mu_{\ell^n}) \otimes 	H^1(\cl{Z}, \mu_{\ell^n}) = H^1(\cl{\F}(Y), 	\mu_{\ell^n}) \otimes M_{n}  \arrow[r] 
				&  H^1(\cl{\F}(Y),  \mu_{\ell^n}\otimes M_{n}). 	
			\end{tikzcd}
		}
		\]	
		
		Dans ce diagramme, les fl\`eches horizontales de droite sont donn\'ees par les cup-produits 
		sur la cohomologie \'etale de $\cl{Y}$ et de $\cl{\F}(Y)$: 
		$$ H^1(\cl{Y}, \mu_{\ell^n}) \otimes   H^0(\cl{\F}, M_{n}) \to  H^1(\cl{Y},  \mu_{\ell^n}    \otimes M_{n}) $$
		et
		$$ H^1(\cl{\F}(Y), \mu_{\ell^n}) \otimes   H^0(\cl{\F}, M_{n}) \to  H^1(\cl{\F}(Y),  \mu_{\ell^n}    \otimes M_{n}).$$
		La fl\`eche verticale de droite est injective car $\cl{Y}$ est normale. 
		La fl\`eche horizontale inf\'erieure droite est
		Galois-\'equivariante pour l'action de 
		$\on{Gal}(\cl{\F}(Y)/\F(Y)) \simeq \on{Gal}(\cl{\F}/\F)$ (voir
		\cite[Prop. 3.4.10.4]{GSz}).  La fl\`eche horizontale sup\'erieure droite
		est donc $\on{Gal}(\cl{\F}/\F)$ \'equivariante.

		La fl\`eche inf\'erieure droite est par ailleurs un isomorphisme de modules galoisiens. Pour le voir, il suffit
		de consid\'erer les groupes ab\'eliens sous-jacents.
		Comme groupe ab\'elien, le module  $M_{n}=	H^1(\cl{Z}, \mu_{\ell^n})$ est une somme directe	
		$\oplus _{i }\Z/{\ell}^{r_{i}}$ avec $r_{i} \leq n$.
		Par la th\'eorie de Kummer, l'application 
		$$H^1(\cl{\F}(Y), 	\mu_{\ell^n}) \otimes 	M_{n}  = H^1(\cl{\F}(Y), 	\mu_{\ell^n}) \otimes 
		H^0( \cl{\F},	M_{n})
		\to  H^1(\cl{\F}(Y),  \mu_{\ell^n}\otimes M_{n})$$
		est donc un isomorphisme.

		L'hypoth\`ese que  $\on{NS}(\cl{Y})$  est un facteur direct d'un module de permutation
		et la proposition \ref{mult} (d) donnent que
		l'application 
		$$H^1(\F, H^1(\cl{Y},  \mu_{\ell^n}    \otimes M_{n})  ) \to H^1(\F, H^1(\cl{\F}(Y),  \mu_{\ell^n}\otimes M_{n}))$$
		est nulle.
		
		On en d\'eduit que l'application
		$$H^1(\F, H^1(\cl{Y}, \mu_{\ell^n}) \otimes 	H^1(\cl{Z}, \mu_{\ell^n}) ) \to H^1(\F,H^1(\cl{\F}(Y), 	\mu_{\ell^n}) \otimes 	H^1(\cl{Z}, \mu_{\ell^n}) )$$
		est nulle. Ceci implique alors que l'application  
		$$ H^1(\F, H^1(\cl{Y}, \mu_{\ell^n}) \otimes 	H^1(\cl{Z}, \mu_{\ell^n}) ) \to 
		H^1(\F, H^2(\cl{\F}(X) ,\mu_{\ell^n}^{\otimes 2} ))$$
		d\'eduite du diagramme commutatif ci-dessus est nulle, ce qu'il fallait d\'emontrer.
	\end{proof}

	\begin{rmk}\label{remarquekeun}
		La restriction $H^3(\cl{Y},\Z_{\ell})$ sans torsion dans le th\'eor\`eme \ref{noyau} n'est pas
		n\'ecessaire. L'hypoth\`ese que $\on{NS}(\cl{Y})$  est sans torsion suffit en effet  \`a assurer que
		l'homomorphisme (\ref{kuenferme}) est  un isomorphisme.
		C'est l\`a un \'enonc\'e assez d\'elicat, pour lequel nous
		renvoyons \`a \cite[Theorem 2.6]{skorobogatov2014brauer} 
		et \`a \cite[Thm. 5.7.7 (iii)]{ctskobrauer}.
	\end{rmk}

	\begin{cor}\label{courbesurfaceqcque} 
		Soit $\F$ un corps fini et $\ell$ un premier distinct de la caract\'eristique de $\F$.
		Soit $S/\F$ une surface projective lisse, g\'eom\'etriquement connexe.
		Soit  $Y/\F$ un produit de courbes projectives, lisses, g\'eom\'etriquement connexes.
		Soit  $X= Y \times S$.
		Alors :
		
		(a) Les propri\'et\'es \'equivalentes de la proposition \ref{surj-char} valent pour $X$.
		
		(b) La fl\`eche $$ \phi_{\ell} : \on{Ker}[CH^2(X)\{\ell\}  \to CH^2(\cl{X})\{\ell\}] \hookrightarrow H^1(\F, H^3(\overline{X},\Z_{\ell}(2))_{\on{tors}})$$ est un isomorphisme de groupes finis.
		
		(c) On a un isomorphisme  
		$$\hspace*{-3cm}\on{Ker}[H^3_{\on{nr}}(\F(X),\Q_{\ell}/\Z_{\ell}(2)) \to H^3_{\on{nr}}(\cl{\F}(X),\Q_{\ell}/\Z_{\ell}(2))]
		$$
		$$\hspace*{5cm} 
		\xrightarrow{\sim} \on{Coker}[CH^2(X)\to CH^2(\cl{X})^G]\{\ell\} .$$
	\end{cor}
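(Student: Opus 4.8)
Le plan est d'\'etablir d'abord l'assertion (a) par applications r\'ep\'et\'ees du th\'eor\`eme \ref{noyau}, puis d'en d\'eduire (b) et (c) de fa\c{c}on purement formelle.

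Pour (a), j'\'ecrirais $Y = C_1 \times \cdots \times C_r$ comme produit de courbes projectives, lisses, g\'eom\'etriquement connexes sur $\F$, et je raisonnerais par r\'ecurrence sur $r \geq 0$. Le cas $r = 0$ est celui de la surface $X = S$ seule, o\`u les propri\'et\'es \'equivalentes de la proposition \ref{surj-char} sont connues d'apr\`es la remarque \ref{remarquedim2} (iii), c'est-\`a-dire d'apr\`es \cite[Remarque 2, p. 790]{colliot1983torsion} et \cite{katoclf}. Pour l'\'etape de r\'ecurrence, je poserais $Z := C_2 \times \cdots \times C_r \times S$, de sorte que $X = C_1 \times Z$, et j'appliquerais le th\'eor\`eme \ref{noyau} avec la courbe $C_1$ dans le r\^ole de ``$Y$'' et $Z$ dans celui de ``$Z$''. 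Les hypoth\`eses du th\'eor\`eme \ref{noyau} portant sur le premier facteur sont imm\'ediates pour une courbe : le module galoisien $\on{NS}(\cl{C_1}) \simeq \Z$ est trivial, donc c'est un module de permutation, et $H^3(\cl{C_1},\Z_{\ell}) = 0$ est sans torsion car $C_1$ est de dimension $1$. Quant aux propri\'et\'es de la proposition \ref{surj-char}, elles valent trivialement pour $C_1$ (remarque \ref{remarquedim2} (iii), cas de la dimension $1$) et, par hypoth\`ese de r\'ecurrence, pour $Z$, qui est un produit de $r - 1$ courbes et de la surface $S$. Le th\'eor\`eme \ref{noyau} fournit alors ces propri\'et\'es pour $X$, d'o\`u (a).

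L'assertion (b) en r\'esulte aussit\^ot, puisque ``$\phi_{\ell}$ est un isomorphisme de groupes finis'' est la propri\'et\'e (ii) de la proposition \ref{surj-char}, qui figure parmi les propri\'et\'es \'equivalentes obtenues en (a). Pour (c), je partirais de la suite exacte (\ref{ctkahn}) : son premier homomorphisme est $\phi_{\ell}$, qui est un isomorphisme d'apr\`es (b) ; par exactitude, l'homomorphisme suivant, de $H^1(\F, H^3(\cl{X},\Z_{\ell}(2))_{\on{tors}})$ dans $\on{Ker}[H^3_{\on{nr}}(\F(X),\Q_{\ell}/\Z_{\ell}(2)) \to H^3_{\on{nr}}(\cl{\F}(X),\Q_{\ell}/\Z_{\ell}(2))]$, est donc nul, et la fin de (\ref{ctkahn}) fournit l'isomorphisme voulu entre ce noyau et $\on{Coker}[CH^2(X)\to CH^2(\cl{X})^G]\{\ell\}$. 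L'essentiel de la difficult\'e ayant \'et\'e absorb\'e par le th\'eor\`eme \ref{noyau}, le principal point \`a surveiller est l'amor\c{c}age de la r\'ecurrence --- le cas des surfaces sur un corps fini, qui n'est pas formel et rel\`eve de la th\'eorie du corps de classes sup\'erieur --- ainsi que le soin de ne pas intervertir les deux facteurs dans le th\'eor\`eme \ref{noyau}, le facteur contraint devant \^etre celui (ici la courbe) dont le groupe de N\'eron-Severi g\'eom\'etrique est un module de permutation.
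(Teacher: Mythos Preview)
Your proof is correct and follows essentially the same route as the paper: induction on the number of curve factors via the th\'eor\`eme \ref{noyau} (using that $\on{NS}(\cl{C})=\Z$ with trivial Galois action and $H^3(\cl{C},\Z_{\ell})=0$), the base case being the surface handled by the remarque \ref{remarquedim2} (iii), and (b), (c) deduced formally from (a) and the suite exacte (\ref{ctkahn}). One tiny caveat: you assert that the first arrow of (\ref{ctkahn}) \emph{is} $\phi_{\ell}$, whereas the paper only uses that both are injections between the same two finite groups, so that a counting argument suffices; either way the conclusion is the same.
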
	
	
	\begin{proof}   Pour une courbe $C$ g\'eom\'etriquement connexe, projective et lisse, on a $\on{NS}(\cl{C})=\Z$ avec action triviale du groupe de Galois, et $H^3(\cl{C},\Z_{\ell})=0$.
		Les conditions \'equivalentes de la proposition \ref{surj-char} sont connues pour une surface
		projective, lisse, g\'eom\'etriquement connexe (voir la remarque \ref{remarquedim2} (iii)).
		Le th\'eor\`eme \ref{noyau} donne (a) par
		produits successifs avec une courbe.
		Il en r\'esulte (b).
		Dans la suite exacte (\ref{ctkahn}), la  fl\`eche injective de groupes finis $$ \on{Ker}[CH^2(X)\{\ell\}  \to CH^2(\cl{X})\{\ell\}] \hookrightarrow H^1(\F, H^3(\overline{X},\Z_{\ell}(2))_{\on{tors}})$$ est donc surjective.
		La suite exacte (\ref{ctkahn}) donne alors (c).
	\end{proof}

	\begin{rmk}
		Soit $X=C\times_{\F}S$,  o\`u $C$ et $S$ sont vari\'et\'es irr\'eductibles, projectives et lisses de dimension $1$ et $2$, respectivement. L'homomorphisme
		\[\on{Ker}[CH^2(S)\{\ell\}  \to CH^2(\cl{S})\{\ell\}]\to \on{Ker}[CH^2(X)\{\ell\}\to CH^2(\cl{X})\{\ell\}]\]
		induit par la projection $X\to S$ n'est pas surjectif en g\'en\'eral. 
		En effet, d'apr\`es le corollaire \ref{courbesurfaceqcque}, la surjectivit\'e de cette fl\`eche \'equivaut \`a celle de
		\[H^1(\F,H^3(\overline{S},\Z_{\ell}(2))_{\on{tors}})\to H^1(\F,H^3(\overline{X},\Z_{\ell}(2))_{\on{tors}}).\]
		Par la formule de K\"unneth en cohomologie $\ell$-adique, le dernier homomorphisme n'est pas surjective d\`es que \[H^1(\F,H^2(\cl{S},\Z_{\ell}(1))_{\on{tors}}\otimes H^1(\cl{C},\Z_{\ell}(1)))\neq 0,\]
		ce qui est le cas, par exemple, si $\ell=2$, $S$ est une surface de Enriques, et la jacobienne de $C$ admet de $2$-torsion non-triviale d\'efinie sur $\F$. 
	\end{rmk}

	\section{Correspondances 
		et   preuve du th\'eor\`eme \ref{mainthm1}}\label{conoyauCH2}
	
	La Nature est un temple o\`u de vivants piliers
	
	Laissent parfois sortir de confuses paroles ;
	
	L'homme y passe \`a travers des for\^{e}ts de symboles
	
	Qui l'observent avec des regards familiers.
	
	(Sonnet des Correspondances, Charles Baudelaire)

	\subsection{Sur un corps alg\'ebriquement clos}\label{coralgclos}
	Soient $k$ un corps alg\'ebriquement clos d'exposant caract\'eristique $p$. Soient $C$ une courbe connexe, projective et lisse sur $k$, $S$ une surface connexe, projective et lisse sur $k$, et $X:=C \times S$.
	
	Notons $p : X \to C$ et $q: X \to S$ les deux projections\footnote{La lettre $p$ est ici utilis\'ee
		dans deux acceptions distinctes, mais aucune confusion n'est possible.}.
	
	On a un homomorphisme
	$$\mu : \on{Pic}(C) \otimes \on{Pic}(S) \xrightarrow{p^*\otimes q^*} \on{Pic}(X)\otimes \on{Pic}(X) \to CH^2(X),$$
	o\`u l'homomorphisme \`a droite est la fl\`eche d'intersection.
	Soit 
	\[\lambda :  CH^2(X)  \to \on{Hom}(\on{Pic}(S), CH_{0}(C)),\qquad \lambda(\Delta):= p_*(q^*(-)\cap \Delta),\] tel que 
	$\lambda(\Delta): \on{Pic}(S) \to CH_{0}(C)$
	est l'homomorphisme standard 
	induit par une correspondance $\Delta$; voir par exemple \cite[Definition 16.1.2]{fulton1998intersection}.
	Enfin, on a un homomorphisme
	$$ \nu  :  \on{Pic}(C) \otimes \on{Pic}(S) \xrightarrow{\mu} CH^2(X)=CH_{1}(X)  \xrightarrow{q_*} CH_{1}(S)=\on{Pic}(S).$$
	
	\begin{lemma}\label{corresp1}
		\begin{enumerate}[label=(\roman*)]
			\item L'application compos\'ee 
			$$\lambda \circ \mu :  \on{Pic}(C) \otimes \on{Pic}(S)  \to CH^2(X) \to \on{Hom}(\on{Pic}(S), CH_{0}(C))$$
			envoie un \'el\'ement $z \otimes D$ sur l'application 
			qui envoie un \'el\'ement $E \in \on{Pic}(S)$
			sur $\on{deg}(D\cdot E)\cdot z$, o\`u $(D\cdot E) \in CH_{0}(S)$.
			\item L'application  $\nu  : \on{Pic}(C) \otimes \on{Pic}(S) \to \on{Pic}(S)$ envoie un \'el\'ement $z\otimes D$
			sur $\on{deg}(z)\cdot D$.	
		\end{enumerate} 
	\end{lemma}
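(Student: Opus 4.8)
The plan is to derive both identities purely formally, from the projection formula for proper push-forward together with the explicit behaviour of flat pull-back and push-forward along the two projections of $X=C\times S$; no geometry is needed beyond the product structure and these standard compatibilities, so I do not expect any genuine obstacle --- only a little bookkeeping of codimensions.

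For (i), I would first unwind the definitions of $\mu$ and $\lambda$ to write
\[(\lambda\circ\mu)(z\otimes D)(E)=p_{*}\bigl(q^{*}E\cap(p^{*}z\cdot q^{*}D)\bigr),\]
all products taken in the Chow ring of the smooth variety $X$. Using commutativity of the intersection product and the fact that flat pull-back $q^{*}$ is a ring homomorphism, $q^{*}E\cdot q^{*}D=q^{*}(E\cdot D)$ with $E\cdot D\in CH_{0}(S)$, so the bracket equals $p^{*}z\cdot q^{*}(E\cdot D)$. Then the projection formula for the proper morphism $p$ (see \cite{fulton1998intersection}) turns $p_{*}\bigl(p^{*}z\cap q^{*}(E\cdot D)\bigr)$ into $z\cap p_{*}\bigl(q^{*}(E\cdot D)\bigr)$ in $CH_{0}(C)$. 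It remains to identify $p_{*}\bigl(q^{*}(E\cdot D)\bigr)\in CH_{1}(C)$: by linearity one reduces to $E\cdot D=[P]$ for a closed point $P\in S$, and then $q^{*}[P]=[C\times\{P\}]$ while $p$ restricts to an isomorphism $C\times\{P\}\xrightarrow{\sim}C$, so $p_{*}(q^{*}[P])=[C]$; since every closed point of $S$ has degree $1$ over the algebraically closed field $k$, this gives $p_{*}\bigl(q^{*}(E\cdot D)\bigr)=\on{deg}(D\cdot E)\,[C]$. As $z\cap[C]=z$ under the canonical isomorphism $CH^{1}(C)\xrightarrow{\sim}CH_{0}(C)=\on{Pic}(C)$, we arrive at $(\lambda\circ\mu)(z\otimes D)(E)=\on{deg}(D\cdot E)\,z$.

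For (ii) the same mechanism applies with the roles of $p$ and $q$ exchanged: $\nu(z\otimes D)=q_{*}(p^{*}z\cdot q^{*}D)$, and the projection formula for $q$ gives $q_{*}(q^{*}D\cap p^{*}z)=D\cap q_{*}(p^{*}z)$ in $\on{Pic}(S)=CH_{1}(S)$. Reducing to $z=[Q]$ for a closed point $Q\in C$, one has $p^{*}[Q]=[\{Q\}\times S]$, and $q$ restricts to an isomorphism $\{Q\}\times S\xrightarrow{\sim}S$, so $q_{*}(p^{*}[Q])=[S]$ and $q_{*}(p^{*}z)=\on{deg}(z)\,[S]$; hence $\nu(z\otimes D)=D\cap\on{deg}(z)\,[S]=\on{deg}(z)\,D$, using $D\cap[S]=D$. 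The one thing worth double-checking throughout is the degree shift under flat pull-back --- so that $q^{*}$ sends $CH_{0}(S)$ into $CH_{1}(X)$ and $p^{*}$ sends $\on{Pic}(C)$ into $CH_{2}(X)$ --- and the identification of a fibre $C\times\{P\}$ of $q$ (resp.\ $\{Q\}\times S$ of $p$) with $C$ (resp.\ $S$) through the complementary projection, which is exactly what converts a degree on $S$ (resp.\ on $C$) into a multiplicity on $C$ (resp.\ on $S$).
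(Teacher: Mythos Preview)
Your argument is correct and follows the same route as the paper: apply the projection formula for $p_{*}$ (resp.\ $q_{*}$) to reduce to computing $p_{*}q^{*}$ on $CH_{0}(S)$ (resp.\ $q_{*}p^{*}$ on $CH_{0}(C)$). The only cosmetic difference is that the paper packages this last step via the base-change identity $p_{*}q^{*}=s^{*}r_{*}$ (resp.\ $q_{*}p^{*}=r^{*}s_{*}$) for the Cartesian square with $\Spec k$, whereas you compute it directly on closed points; since $k$ is algebraically closed these are the same computation.
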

	
	\begin{proof} 
		On utilisera le diagramme cart\'esien \'evident
		\[
		\begin{tikzcd}
			X \arrow[r,"q"] \arrow[d,"p"] & S \arrow[d,"r"]\\
			C \arrow[r,"s"] & \Spec k.    
		\end{tikzcd}	
		\]
		(i) On veut d\'emontrer que l'on a $p_*((p^*(z)\cdot q^*(D))\cdot q^*(D))=\on{deg}(D\cdot E)\cdot z$. Par la formule de projection, on a \[p_*((p^*(z)\cdot q^*(D))\cdot q^*(E))=p_*(p^*(z)\cdot q^*(D\cdot E))=z\cdot p_*q^*(D\cdot E).\] 
		En utilisant le diagramme ci-dessus, on obtient \[p_*q^*(D\cdot E)=s^*r_*(D\cdot E)=s^*(\on{deg}(D\cdot E))=\on{deg}(D\cdot E)[C].\] On conclut que l'on a $z\cdot p_*q^*(D\cdot E)= \on{deg}(D\cdot E)\cdot z$.
		
		(ii) On veut montrer que l'on a $q_*((p^*(z)\cdot q^*(D))=\on{deg}(z)\cdot D$. Par la formule de projection, on a 
		\[q_*((p^*(z)\cdot q^*(D))= q_*p^*(z)\cdot D.\] Par le diagramme ci-dessus, on a 
		\[q_*p^*(z)\cdot D=r^*s_*(z)\cdot D=r^*(\on{deg}(z)) \cdot D =
		\on{deg}(z)\cdot D.\qedhere\]
	\end{proof}

	\begin{prop}\label{corresp2} 
		Soit $k$ un corps alg\'ebriquement clos d'exposant caract\'eristique $p$. Soit $C/k$ une courbe projective et lisse connexe.
		Soit $S$ une surface  projective, lisse sur $k$ et connexe.
		Supposons que $\on{Pic}(S)$ est de type fini et que le discriminant de la forme d'intersection sur $\on{Num}(S)=\on{NS}(S)/\on{tors}$
		est de la forme $\pm p^r$ pour  un entier $r\geq 0$,
		ce qui est le cas si l'on a
		$b_{1}=0$ et  $b_{2}-\rho=0$.
		Soit $X=C \times S$. Alors : 
		
		\begin{enumerate}[label=(\roman*)]
			\item
			La fl\`eche compos\'ee
			$$\on{Pic}(C) \otimes \on{Pic}(S) \to CH^2(X) \xrightarrow{\lambda} \on{Hom}(\on{Pic}(S),\on{Pic}(C))$$
			se factorise comme 
			\[\hspace{-4cm}\on{Pic}(C) \otimes \on{Pic}(S) \to \on{Pic}(C) \otimes \on{Num}(S)  
			\] \[\hspace*{4.5cm}\to \on{Hom}(\on{Num}(S),\on{Pic}(C))\to \on{Hom}(\on{Pic}(S),\on{Pic}(C))\]
			o\`u la premi\`ere fl\`eche est une surjection, la derni\`ere fl\`eche une injection et la fl\`eche m\'ediane 
			a noyau et conoyau finis $p$-primaires. 
			\item Le noyau $N$ de l'homorphisme naturel
			$$\on{Pic}(C) \otimes \on{Pic}(S) \to CH^2(X)$$
			obtenu par image r\'eciproque via chacune des projections, et intersection
			sur $X$, est un groupe fini  $p$-primaire.
			\item Soit $L=k(C)$ le corps des fonctions de la courbe $C$.
			La restriction de la projection  $p : X \to C$ 
			au-dessus du point g\'en\'erique de $C$ induit une fl\`eche surjective $CH^2(X) \to CH^2(S_{L})$
			et une suite exacte 
			\[0 \to  N \to \on{Pic}(C) \otimes \on{Pic}(S) \to CH^2(X) \to CH^2(S_{L}) \to 0.\]
			avec $N$ un groupe fini $p$-primaire.

			\item La fl\`eche $\lambda : CH^2(X) \to \on{Hom}(\on{Pic}(S),\on{Pic}(C))$ induit un homomorphisme 
			$$CH^2(S_{L}) \to \on{Hom}(\on{Pic}(S)_{\on{tors}},\on{Pic}(C)).$$ 
		\end{enumerate} 
	\end{prop}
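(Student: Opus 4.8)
Le plan de démonstration est le suivant. Le squelette est une suite exacte de localisation pour la projection $p : X \to C$. Pour tout point fermé $c \in C$, la fibre $X_c = \{c\}\times S$ est un diviseur de $X$, et l'image directe $CH_1(X_c) = \on{Pic}(S) \to CH^2(X)$ le long de l'immersion $i_c : X_c \hookrightarrow X$ envoie un diviseur $D$ de $S$ sur le cycle $[\{c\}\times D] = p^*[c]\cdot q^*D = \mu([c]\otimes D)$ (formule de projection, $i_c^*q^*D = D$). Comme $k$ est algébriquement clos, les classes $[c]$, de degré $1$, engendrent $\on{Pic}(C)$ (leurs différences engendrent $\on{Pic}^0(C)$, et une classe $[c_0]$ engendre un facteur $\Z$); donc l'image de $\bigoplus_c \on{Pic}(S) \to CH^2(X)$ co\"{\i}ncide avec celle de $\mu$. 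En passant à la limite sur les ouverts non vides $U\subseteq C$ dans les suites exactes de localisation \cite{fulton1998intersection} pour $p^{-1}(U)\subseteq X$, et en utilisant $CH^2(S_L) = \varinjlim_U CH^2(p^{-1}(U))$, on obtient une suite exacte
\[ \on{Pic}(C)\otimes\on{Pic}(S) \xrightarrow{\mu} CH^2(X) \longrightarrow CH^2(S_L) \to 0, \]
dans laquelle la seconde flèche est la restriction à la fibre générique et $\on{Ker}[CH^2(X)\to CH^2(S_L)] = \on{im}\,\mu$. C'est la moitié droite de la suite de (iii); il restera à voir que $N := \on{Ker}\,\mu$ est fini $p$-primaire, ce qui fait l'objet de (ii).

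Pour (i): d'après le lemme~\ref{corresp1}(i), $\lambda\circ\mu$ envoie $z\otimes D$ sur l'application $E\mapsto (D\cdot E)\,z$, où $(D\cdot E)\in\Z$ est le nombre d'intersection, qui ne dépend que des classes numériques de $D$ et $E$. Donc $\lambda\circ\mu$ se factorise par la surjection $g := \on{id}_{\on{Pic}(C)}\otimes\pi : \on{Pic}(C)\otimes\on{Pic}(S) \twoheadrightarrow \on{Pic}(C)\otimes\on{Num}(S)$ (avec $\pi : \on{Pic}(S)\twoheadrightarrow\on{Num}(S)$), l'application induite prenant ses valeurs dans le sous-groupe $\on{Hom}(\on{Num}(S),\on{Pic}(C)) \hookrightarrow \on{Hom}(\on{Pic}(S),\on{Pic}(C))$ (composition avec $\pi$, injective car $\pi$ surjective). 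Comme $\on{Num}(S)$ est libre de type fini, on a un isomorphisme canonique $\on{Pic}(C)\otimes\on{Num}(S)\cong\on{Hom}(\on{Num}(S)^\vee,\on{Pic}(C))$ où $\on{Num}(S)^\vee := \on{Hom}(\on{Num}(S),\Z)$, et sous cet isomorphisme l'application médiane $f$ s'identifie à $\on{Hom}(\psi,\on{Pic}(C))$, où $\psi : \on{Num}(S)\to\on{Num}(S)^\vee$ est la forme d'intersection. Par hypothèse $\psi$ est injective, de conoyau $T$ fini d'ordre $p^r$; la suite exacte longue des $\on{Hom}(-,\on{Pic}(C))$ appliquée à $0\to\on{Num}(S)\xrightarrow{\psi}\on{Num}(S)^\vee\to T\to 0$ donne $\on{Ker} f = \on{Hom}(T,\on{Pic}(C))$ et $\on{Coker} f = \on{Ext}^1(T,\on{Pic}(C))$ (on a $\on{Ext}^1(\on{Num}(S)^\vee,\on{Pic}(C)) = 0$). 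Ces deux groupes sont finis $p$-primaires: c'est clair pour le premier, et pour le second car $\on{Pic}^0(C) = J(C)(k)$ est divisible, d'où $\on{Ext}^1(\Z/p^a,\on{Pic}(C)) = \on{Pic}(C)/p^a\on{Pic}(C) = \Z/p^a$. (L'assertion entre parenthèses de l'énoncé en découle: $b_1 = 0$ force $\on{Pic}^0_{S/k,{\rm red}} = 0$, donc $\on{Pic}(S)$ est de type fini; la suite de Kummer donne $\on{NS}(S)\otimes\Z_\ell\cong H^2(S,\Z_\ell(1))$ dès que $b_2 = \rho$, et la dualité de Poincaré montre alors que le discriminant du réseau $\on{Num}(S)$ est premier à $\ell$ pour tout $\ell\neq p$.)

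Pour (ii): on a $N\subseteq\on{Ker}(\lambda\circ\mu)\cap\on{Ker}\,\nu$, car $\lambda\circ\mu$ et $\nu = q_*\circ\mu$ se factorisent par $\mu$. Puisque $\lambda\circ\mu = \iota\circ f\circ g$ avec $\iota$ injective, on a $\on{Ker}(\lambda\circ\mu) = g^{-1}(\on{Ker} f)$. D'autre part, de $0\to\on{Pic}(S)_{\on{tors}}\to\on{Pic}(S)\to\on{Num}(S)\to 0$ (dernier terme libre) on tire $\on{Ker} g = \on{Pic}(C)\otimes\on{Pic}(S)_{\on{tors}}$; comme $\on{Pic}^0(C)$ est divisible et $\on{Pic}(S)_{\on{tors}}$ fini, $\on{Pic}^0(C)\otimes\on{Pic}(S)_{\on{tors}} = 0$, d'où $\on{Ker} g\cong\on{Pic}(S)_{\on{tors}}$ via $D\mapsto [c_0]\otimes D$. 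Or le lemme~\ref{corresp1}(ii) donne $\nu([c_0]\otimes D) = \deg([c_0])\,D = D$: la restriction de $\nu$ à $\on{Ker} g$ est donc l'injection canonique $\on{Pic}(S)_{\on{tors}}\hookrightarrow\on{Pic}(S)$, si bien que $N\cap\on{Ker} g\subseteq\on{Ker}\,\nu\cap\on{Ker} g = 0$. Par conséquent $g$ est injective sur $N$, d'image contenue dans $\on{Ker} f = \on{Hom}(T,\on{Pic}(C))$; ce dernier étant fini $p$-primaire, $N$ l'est aussi.

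Les énoncés (iii) et (iv) s'en déduisent. La suite exacte de localisation fournit l'exactitude en $CH^2(X)$ et $CH^2(S_L)$ ainsi que la surjectivité de $CH^2(X)\to CH^2(S_L)$, et $N = \on{Ker}\,\mu$ est fini $p$-primaire par (ii): c'est (iii). Pour (iv), le lemme~\ref{corresp1}(i) montre que pour $E\in\on{Pic}(S)_{\on{tors}}$ on a $\lambda(\mu(z\otimes D))(E) = (D\cdot E)\,z = 0$, puisque l'entier $(D\cdot E)$ est de torsion, donc nul; autrement dit le composé de $\mu$, de $\lambda$ et de la restriction $\on{Hom}(\on{Pic}(S),\on{Pic}(C))\to\on{Hom}(\on{Pic}(S)_{\on{tors}},\on{Pic}(C))$ est nul, et comme $\on{im}\,\mu = \on{Ker}[CH^2(X)\to CH^2(S_L)]$, la flèche $CH^2(X)\to\on{Hom}(\on{Pic}(S)_{\on{tors}},\on{Pic}(C))$ se factorise par $CH^2(S_L)$. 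L'obstacle principal sera le passage, dans (ii), de la finitude de $N$ à sa $p$-primarité: la finitude découle immédiatement de la factorisation de (i), mais éliminer la torsion première à $p$ du sous-groupe $\on{Ker} g\cong\on{Pic}(S)_{\on{tors}}$ demande d'invoquer la seconde correspondance $\nu$ et la divisibilité de $J(C)(k)$; le reste est formel une fois la suite de localisation mise en place et l'image de son bord identifiée à $\on{im}\,\mu$.
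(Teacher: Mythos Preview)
Your proof is correct and follows essentially the same route as the paper's: the localisation sequence for $p:X\to C$ gives (iii), Lemma~\ref{corresp1}(i) and the exact sequence $0\to\on{Num}(S)\to\on{Num}(S)^\vee\to T\to 0$ give (i), and the combination of (i) with Lemma~\ref{corresp1}(ii) and the divisibility of $\on{Pic}^0(C)$ gives (ii). Your handling of (ii) is in fact slightly cleaner than the paper's: you show directly that $g|_N$ is injective with image in the finite $p$-primary group $\on{Ker} f$, whereas the paper first argues that $p^r\cdot N=0$ (with $p^r$ the exponent of $\on{Ker} f$) and then checks separately that the $p^r$-torsion of $\on{Pic}(C)\otimes\on{Pic}(S)$ is finite.
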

	
	\begin{proof} 
		On utilise tacitement le fait que $\on{Pic}(C) \simeq \Z \oplus   \on{Pic}^0(C)$, et
		que  pour tout entier $n>0$ la multiplication par $n$ sur $\on{Pic}^0(C)$ 
		est surjective \`a noyau fini.

		(i) Vu le lemme \ref{corresp1}(i), cette fl\`eche compos\'ee se factorise par
		$$\on{Pic}(C) \otimes \on{Num}(S) \to \on{Hom}(\on{Num}(S), \on{Pic}(C)).$$
		L'hypoth\`ese sur $S$ et la forme d'intersection donnent une suite exacte
		$$ 0 \to \on{Num}(S) \to \on{Hom}(\on{Num}(S),\Z)  \to R \to 0$$
		avec $R$ fini $p$-primaire. Si l'on tensorise cette suite exacte par $\on{Pic}(C)$
		et l'on utilise la structure de $\on{Pic}(C)$, on voit que l'homomorphisme
		$$\on{Pic}(C) \otimes \on{Num}(S)  \to \on{Hom}(\on{Num}(S),\Z)  \otimes \on{Pic}(C)= \on{Hom}(\on{Num}(S),\on{Pic}(C))$$ a noyau et conoyau finis
		$p$-primaires.
		Notons   $p^r$ l'exposant du noyau.
		
		(ii)  Soit  $\alpha$ dans  le noyau de $\on{Pic}(C) \otimes \on{Pic}(S) \to CH^2(X)$.
		Ce groupe est  contenu dans le noyau de l'application compos\'ee avec $\lambda$ :
		\[\on{Pic}(C) \otimes \on{Pic}(S) \to \on{Hom}(\on{Pic}(S), \on{Pic}(C)).\]
		L'image de $p^r\cdot\alpha$  dans $\on{Pic}(C) \otimes \on{Num}(S)$
		est nulle.
		
		On a la suite exacte
		\begin{equation}\label{obvious}
			0 \to \on{Pic}(S)_{\on{tors}} \to \on{Pic}(S) \to \on{Num}(S) \to 0
		\end{equation}
		qui est scind\'ee comme suite de groupes ab\'eliens.
		On a donc une suite exacte
		$$ 0 \to \on{Pic}(C) \otimes \on{Pic}(S)_{\on{tors}} \to \on{Pic}(C) \otimes \on{Pic}(S) \to \on{Pic}(C) \otimes \on{Num}(S) \to 0.$$
		Ainsi $\beta= p^r\cdot \alpha$ est dans l'image du groupe fini $\on{Pic}(C) \otimes \on{Pic}(S)_{\on{tors}}   $.
		Fixons un point $P \in  C(k)$. Comme le groupe $\on{Pic}^0(C)$ est divisible, et donc
		$\on{Pic}^0(C) \otimes \on{Pic}(S)_{\on{tors}}=0$,
		l'\'el\'ement  $ \beta$ peut
		s'\'ecrire $n\cdot (P\otimes D)$ avec $D \in \on{Pic}(S)_{\on{tors}}$ et $n$ un entier.
		
		Par hypoth\`ese, l'image de $\alpha$, et donc de $\beta$ dans $CH^2(X)$ est nulle.
		Utilisons maintenant l'application $CH^2(X) \to \on{Pic}(S)$ et le lemme \ref{corresp1}(ii).
		On obtient $n\cdot D=0 \in \on{Pic}(S)$. Donc $\beta=0$ et $p^r\cdot\alpha=0.$
		On conclut que le noyau de $$\on{Pic}(C) \otimes \on{Pic}(S) \to CH^2(X)$$
		est contenu dans le sous-groupe de $\on{Pic}(C) \otimes \on{Pic}(S)$ annul\'e par $p^r$.
		Ce sous-groupe est un groupe fini.

		\medskip
		
		(iii) Pour toute courbe projective et lisse connexe $C$ sur $k$ alg\'ebriquement clos et toute 
		vari\'et\'e projective et lisse connexe $S$ sur $k$, la restriction de la projection $X=C \times S \to C$ 
		au point g\'en\'erique de $C$ induit
		la suite exacte de localisation bien connue
		$$    \oplus_{x\in C^{(1)}}\on{Pic}(S_x) \to  CH^2(X) \to  CH^2(S_L) \to  0, $$
		soit encore
		$$ {\rm Div}(C) \otimes \on{Pic}(S) \to CH^2(X) \to  CH^2(S_L) \to  0.$$
		Elle induit une suite exacte
		$$   \on{Pic}(C)  \otimes \on{Pic}(S) \to CH^2(X) \to  CH^2(S_L) \to  0.$$
		Dans le cas ici consid\'er\'e, on a \'etabli que  la fl\`eche $ \on{Pic}(C)  \otimes \on{Pic}(S) \to CH^2(X)$
		a son noyau fini $p$-primaire.
		
		L'\'enonc\'e (iv) r\'esulte de la suite exacte (\ref{obvious}) et du fait que l'application
		compos\'ee $\on{Pic}(C)  \otimes \on{Pic}(S) \to CH^2(X) \to \on{Hom}(\on{Pic}(S),\on{Pic}(C))$ se factorise
		par  le groupe $\on{Hom}(\on{Num}(S),\on{Pic}(C))$.
	\end{proof}

	\subsection{Sur un corps fini}

	Soit $\g$ un groupe 
	profini, 
	$\h \subset \g$ un sous-groupe normal ouvert. Soit $G=\g/\h$.
	Soient $M$ et $N$ deux $\g$-modules continus discrets. 
	
	On a un homomorphisme de groupes
	ab\'eliens		 	 
	$$\theta_{\h} : M^\h \otimes N^\h \to   M \otimes N$$
	d\'efini par
	$$ (m \otimes n) \mapsto \sum_{g\in G} g(m \otimes n) = \sum_{g\in G} g(m) \otimes g(n).$$
	On v\'erifie imm\'ediatement que l'image de  $\theta_{\h}$ est dans $(M\otimes N)^\g$.
	On consid\`ere l'application 
	$$ \Theta_{M,N} := \oplus_{\h \subset \g} \theta_{\h} : \bigoplus_{\h\subset \g} (M^\h \otimes N^\h)  \to (M\otimes N)^\g.$$

	\begin{lemma}\label{endomi} Soient $\F$ un corps fini et $\g={\rm Gal}(\cl{\F}/\F)$. Soient $M$ un $\g$-module de type fini, et $N$ un $\g$-module.   Supposons que  $N$ satisfait $H^1(\h,N)=0$ pour tout sous-groupe ouvert $\h \subset \g$.
		Alors $\Theta_{M,N}$ est surjective.
	\end{lemma}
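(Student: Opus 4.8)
The plan is to establish the surjectivity of $\Theta_{M,N}$ by a dévissage on $M$, the base case being a permutation module $M=\Z[\g/\h]$ with $\h\subset\g$ open (write $G=\g/\h$). In that case the trivial coset $e\h$ is fixed by $\h$, so $e\h\otimes x$ lies in $M^\h\otimes N^\h$ for each $x\in N^\h$. Let $p_e\colon M\otimes N\to N$ be the projection onto the summand indexed by $e\h$. A direct verification shows that $p_e$ restricts to an isomorphism $(M\otimes N)^\g\xrightarrow{\ \sim\ }N^\h$: if $\xi=\sum_{\overline g}\overline g\otimes n_{\overline g}$ is $\g$-invariant, comparing coefficients forces $n_{\overline g}=g\cdot n_{e\h}$ for all $g$, so $\xi$ is determined by $n_{e\h}$, and well-definedness forces $n_{e\h}\in N^\h$; conversely $x\mapsto\sum_{\overline g}\overline g\otimes g\cdot x$ is a well-defined inverse. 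Since $\theta_\h(e\h\otimes x)=\sum_{g\in G}g\cdot(e\h)\otimes g\cdot x=\sum_{\overline g}\overline g\otimes g\cdot x$ is sent to $x$ by $p_e$, the map $\theta_\h$, and hence $\Theta_{M,N}$, is surjective (no hypothesis on $N$ is used here). As $\Theta_{M,N}$ is additive in $M$, the conclusion holds for every permutation module $M=\bigoplus_j\Z[\g/\h_j]$.

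For a general finitely generated $M$ I would invoke the Endo--Miyata results \cite[Lemma~1.1 and Theorem~1.5]{endo1975classification} (exactly as in the proof of Lemme~\ref{endomiyata}): since the absolute Galois group $\g$ is procyclic there is an exact sequence of $\g$-modules $0\to Q\to P\to M\to 0$ with $P$ a finite permutation module and $Q$ torsion-free and a direct summand of a permutation module $\bigoplus_i\Z[\g/\h_i]$. Applying $-\otimes_\Z N$ and using that $P$ is $\Z$-free, put $R=\on{Ker}[P\otimes N\to M\otimes N]$; then we get exact sequences $0\to\on{Tor}_1^{\Z}(M,N)\to Q\otimes N\to R\to 0$ and $0\to R\to P\otimes N\to M\otimes N\to 0$. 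Now $Q\otimes N$ is a direct summand of $\bigoplus_i\Z[\g/\h_i]\otimes N\cong\bigoplus_i\on{Ind}_{\h_i}^{\g}N$ (projection formula), so Shapiro's lemma and the hypothesis $H^1(\h_i,N)=0$ give $H^1(\g,\bigoplus_i\on{Ind}_{\h_i}^{\g}N)=\bigoplus_iH^1(\h_i,N)=0$, hence $H^1(\g,Q\otimes N)=0$. Moreover $\on{Tor}_1^{\Z}(M,N)$ is a torsion group, so $H^2(\g,\on{Tor}_1^{\Z}(M,N))=0$ because $\on{cd}(\g)\le 1$; the first exact sequence then yields $H^1(\g,R)=0$. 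The long exact cohomology sequence of $0\to R\to P\otimes N\to M\otimes N\to 0$ therefore shows that $(P\otimes N)^\g\to(M\otimes N)^\g$ is surjective. Finally, $P\twoheadrightarrow M$ induces a commutative square relating $\Theta_{P,N}$ to $\Theta_{M,N}$, and combining this with the surjectivity of $\Theta_{P,N}$ (base case) and of $(P\otimes N)^\g\to(M\otimes N)^\g$ gives the surjectivity of $\Theta_{M,N}$.

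I expect the genuinely delicate ingredient to be the Endo--Miyata input: one really needs $Q$ to be a \emph{direct summand of a permutation module}, not merely coflasque, in order to pass from $H^1(\h_i,N)=0$ to $H^1(\g,Q\otimes N)=0$, and it is precisely here that the procyclicity of $\g$ and the hypothesis on $N$ are both used. The remaining steps are formal homological algebra, the only observation needed being that $\on{Tor}_1^{\Z}(M,N)$ is torsion, so that $\on{cd}(\g)\le 1$ annihilates its higher cohomology over $\g$.
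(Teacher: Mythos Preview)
Your proof is correct and follows essentially the same route as the paper's own argument: first the permutation case via the identification $(\Z[\g/\h]\otimes N)^\g\cong N^\h$, then the general case via the Endo--Miyata resolution $0\to Q\to P\to M\to 0$, the vanishing $H^1(\g,Q\otimes N)=0$ from Shapiro plus the hypothesis on $N$, and the vanishing of $H^2$ of the torsion kernel to conclude that $(P\otimes N)^\g\to(M\otimes N)^\g$ is onto. The only cosmetic difference is that you name the torsion kernel explicitly as $\on{Tor}_1^{\Z}(M,N)$, whereas the paper just calls it a torsion module $R$; your identification of the delicate point (needing $Q$ to be a summand of a permutation module, not merely coflasque) is exactly right.
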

	\begin{proof}   
		On commence par l'\'etablir pour  un $\g$-module de la forme $M=\Z[\g/\h]$. 
		C'est un \'enonc\'e g\'en\'eral. Notons que l'on a $M=M^\h$.
		Dans ce cas, pour tout $\g$-module $N$, on a $N^\h \xrightarrow{\sim} (\Z[\g/\h]	 \otimes N)^\g$,
		la fl\`eche \'etant  donn\'ee par $n \mapsto \sum_{\sigma \in G} \sigma \otimes (\sigma\cdot n)$.
		Ceci implique que la fl\`eche  $M^\h \otimes N^\h \to (M\otimes N)^\g$ donn\'ee par
		$$m \otimes n \mapsto \sum_{g\in G} g(m) \otimes g(n)$$ est surjective.
		On en d\'eduit que pour tout $\g$-module $P$ de permutation et tout $\g$-module $N$
		l'application $\Theta_{P,N}$ est surjective.

		Soit maintenant $M$ un $\g$-module de type fini. 
		Comme rappel\'e au  lemme \ref{endomiyata}, on a une suite exacte courte de $\g$-modules \[0 \to Q \to P \to M \to 0,\] o\`u $P$ est un $\g$-r\'eseau de permutation and $Q$ est un facteur direct d'un $\g$-r\'eseau de permutation.
		
		Tensorisons la suite exacte ci-dessus par $N$. On obtient une suite exacte
		$$ 0 \to R \to Q \otimes N \to P \otimes N \to M \otimes N \to 0,$$
		o\`u $R$ est un $\g$-module de torsion. Cette suite se coupe en deux suites exactes
		$$ 0 \to R \to 	Q \otimes N \to S \to 0$$
		et 
		$$0 \to S \to P \otimes N \to M \otimes N \to 0.$$
		La premi\`ere donne la suite exacte
		$$ H^1(\g, Q \otimes N) \to H^1(\g,S) \to  H^2(\g,R).$$
		L'hypoth\`ese faite sur $N$ dans le lemme, le fait que $Q$ est un facteur direct
		d'un module de permutation, et le lemme de Shapiro, donnent $H^1(\g, Q \otimes N) =0$.
		Par ailleurs, comme $\F$ est un corps fini et $R$ est de torsion, on a $H^2(\g,R)=0$.
		On a donc $H^1(\g,S) =0$. La deuxi\`eme suite exacte donne alors que
		l'application $	(P \otimes N)^\g  \to (M \otimes N )^\g$ est surjective.
		
		L'homomorphisme $P\to M$ induit le diagramme commutatif suivant:
		\[
		\begin{tikzcd}
			\bigoplus_{\h\subset \g} (P^\h \otimes N^\h) \arrow[d]  \arrow[r,"\Theta_{P,N}"] & (P\otimes N)^\g \arrow[d,->>]\\ 
			\bigoplus_{\h\subset \g} (M^\h \otimes N^\h)  \arrow[r,"\Theta_{M,N}"] &  (M\otimes N)^\g.
		\end{tikzcd}
		\]
		Le fait que $\Theta_{P,N}$ est surjectif implique donc que $\Theta_{M,N}$ est surjectif.
	\end{proof}

	\begin{prop}\label{corresp2fini} 
		Soit $\F$ un corps fini de caract\'eristique $p$.  Soit $G={\rm Gal}(\cl{\F}/\F)$. 
		Soit $C/\F$ une courbe projective, lisse  et g\'eom\'etriquement connexe.
		Soit $K=\F(C)$ et $L=\cl{\F}(C)$.
		Soit $S/\F$ une surface  projective, lisse   et g\'eom\'etriquement connexe.
		Supposons que $\on{Pic}(\cl{S})$ est de type fini et que le discriminant de la   forme d'intersection sur  le r\'eseau 
		$\on{Num}(\cl{S})=\on{NS}(\cl{S})    /  \on{NS}(\cl{S})_{\rm tors}       $
		est de la forme $\pm p^r$ pour  un entier $r\geq 0$,
		ce qui est le cas si l'on a
		$b_{1}=0$ et  $b_{2}-\rho=0$.
		Soit $X=C \times_{\F} S$. Alors : 
		
		(i) La suite exacte $$ 0 \to (\on{Pic}(\cl{S}) \otimes \on{Pic}(\cl{C} ))_{{p}} \to CH^2(\cl{X})_{{p}} \to CH^2(S_{L})_{{p}} \to 0$$
		obtenue \`a partir de celle de la proposition \ref{corresp2} par tensorisation par $\Z[1/p]$ induit une suite exacte
		$$ 0 \to (\on{Pic}(\cl{S}) \otimes \on{Pic}(\cl{C} ))_{{p}}^G \to CH^2(\cl{X})_{{p}}^G  \to CH^2(S_{L})_{{p}}^G \to 0.$$
		
		(ii) L'image de $(\on{Pic}(\cl{S}) \otimes \on{Pic}(\cl{C} ))^G \to CH^2(\cl{X})^G$
		est incluse dans l'image de $CH^2(X) \to CH^2(\cl{X})^G$.
		
		(iii) La restriction $CH^2(\cl{X}) \to CH^2(S_{L})$ induit un isomorphisme de groupes de torsion
		$$\on{Coker}[CH^2(X)\to CH^2(\cl{X})^G]_{{p}} \xrightarrow{\sim} \on{Coker}[CH^2(S_K)\to CH^2(S_{L})^G]_{{p}}$$ et ce dernier groupe
		est isomorphe \`a $\on{Coker}[A_{0}(S_{K})\to A_{0}(S_{L})^G]_{{p}}$.
	\end{prop}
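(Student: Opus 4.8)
L'id\'ee est de traiter successivement (i), (ii), (iii), le point commun \'etant une d\'ecomposition $G$-\'equivariante de $CH^2(\cl X)$ provenant de la projection $q\colon X\to S$. On remarque d'abord que $C$, $S$, $S_K$ et $S_L$, projectives lisses g\'eom\'etriquement connexes sur leurs corps de base respectifs, sont toutes d'indice $1$: chacune admet des points ferm\'es de tout degr\'e assez grand (estimations de Lang--Weil pour $C$ et $S$ sur les corps finis ad\'equats, en notant que $\F_{q^n}\otimes_{\F}K=\F_{q^n}(C)$ est un corps puisque $C$ est g\'eom\'etriquement connexe). On fixe un $0$-cycle $z_0$ sur $C$ de degr\'e $1$, dont la classe dans $CH_0(\cl C)$ est $G$-invariante. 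L'application $G$-\'equivariante $\sigma\colon\on{Pic}(\cl S)=CH_1(\cl S)\to CH_1(\cl X)=CH^2(\cl X)$, $\sigma(D)=p^*z_0\cdot q^*D$, v\'erifie $q_*\circ\sigma=\mathrm{id}$ d'apr\`es le lemme~\ref{corresp1}(ii); d'o\`u une d\'ecomposition de $G$-modules $CH^2(\cl X)=\sigma(\on{Pic}(\cl S))\oplus\ker(q_*)$, et de m\^eme apr\`es tensorisation par $\Z[1/p]$. Comme $\on{Pic}(\cl S)$ est de type fini, $\on{Pic}(\cl S)=\on{NS}(\cl S)$.

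\emph{D\'emonstration de (i).} Posons $A=(\on{Pic}(\cl S)\otimes\on{Pic}(\cl C))_p$, $B=CH^2(\cl X)_p$, $D=CH^2(S_L)_p$. La d\'ecomposition $G$-\'equivariante $\on{Pic}(\cl C)=\Z\,[z_0]\oplus\on{Pic}^0(\cl C)$ et la finitude $p$-primaire du noyau de $\on{Pic}(\cl C)\otimes\on{Pic}(\cl S)\to CH^2(\cl X)$ (proposition~\ref{corresp2}(ii)) permettent d'identifier $A$, de mani\`ere compatible avec la d\'ecomposition de $B$, \`a $\sigma(\on{NS}(\cl S))_p\oplus(\on{Num}(\cl S)\otimes J(\cl\F))_p$: le facteur $\sigma(\on{NS}(\cl S))_p$ co\"{\i}ncide avec le premier facteur de $B$, tandis que l'image de $\on{Pic}^0(\cl C)\otimes\on{Pic}(\cl S)$ est contenue dans $\ker(q_*)$ (son image par $q_*$ est nulle, lemme~\ref{corresp1}(ii)) et \'egale, apr\`es inversion de $p$, \`a $\on{Num}(\cl S)\otimes J(\cl\F)_p$, le sous-groupe $\on{Pic}(\cl S)_{\mathrm{tors}}\otimes J(\cl\F)$ \'etant de torsion et tu\'e par la divisibilit\'e de $J(\cl\F)_p$. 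Ainsi $D\simeq\ker(q_*)_p/(\on{Num}(\cl S)\otimes J(\cl\F))_p$ et la suite $0\to A\to B\to D\to0$ est somme directe de la suite trivialement scind\'ee port\'ee par $\sigma(\on{NS}(\cl S))_p$ et de la suite $0\to(\on{Num}(\cl S)\otimes J(\cl\F))_p\to\ker(q_*)_p\to D\to0$. Apr\`es passage aux $G$-invariants, il suffit donc de montrer $H^1(G,\on{Num}(\cl S)\otimes J(\cl\F))=0$. On choisit une suite exacte de $G$-modules $0\to P_1\to P_0\to\on{Num}(\cl S)\to0$ avec $P_0,P_1$ de permutation; comme $\on{Num}(\cl S)$ est libre, la tensorisation par $J(\cl\F)$ reste exacte. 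Le lemme de Shapiro et le th\'eor\`eme de Lang donnent $H^1(G,P_0\otimes J(\cl\F))=0$, et $H^2(G,P_1\otimes J(\cl\F))=0$ car $\F$ est de dimension cohomologique $1$ et $J(\cl\F)$ est de torsion; d'o\`u $H^1(G,\on{Num}(\cl S)\otimes J(\cl\F))=0$ et (i).

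\emph{D\'emonstration de (ii) et (iii).} Pour (ii), on applique le lemme~\ref{endomi} avec $M=\on{Pic}(\cl S)$ et $N=\on{Pic}(\cl C)$: l'hypoth\`ese $H^1(\h,\on{Pic}(\cl C))=0$ pour tout $\h\subset G$ ouvert vaut car $H^1(\h,\on{Pic}^0(\cl C))=0$ (Lang) et $H^1(\h,\Z)=\on{Hom}_{\mathrm{cont}}(\h,\Z)=0$. Donc $\Theta\colon\bigoplus_\h(\on{Pic}(\cl S)^\h\otimes\on{Pic}(\cl C)^\h)\to(\on{Pic}(\cl S)\otimes\on{Pic}(\cl C))^G$ est surjective; pour $\h$ de corps fixe $\F'$ on a $\on{Pic}(\cl S)^\h=\on{Pic}(S_{\F'})$ et $\on{Pic}(\cl C)^\h=\on{Pic}(C_{\F'})$ (car $\on{Br}(\F')=0$), et, le changement de base du morphisme $X_{\F'}\to X$ le long de $\cl X\to X$ \'etant l'application de repliement $\coprod_{[\F':\F]}\cl X\to\cl X$, le compos\'e $\on{Pic}(S_{\F'})\otimes\on{Pic}(C_{\F'})\to CH^2(X_{\F'})\to CH^2(X)\to CH^2(\cl X)^G$ (image directe propre au milieu) co\"{\i}ncide avec $\sum_{g\in G/\h}g\circ(-)$, donc avec $\theta_\h$ suivi de l'intersection $(\on{Pic}(\cl C)\otimes\on{Pic}(\cl S))^G\to CH^2(\cl X)^G$; la surjectivit\'e de $\Theta$ donne (ii). Pour (iii), la suite de localisation de $X\to C$ sur $\F$ fournit une suite exacte $(\on{Pic}(C)\otimes\on{Pic}(S))_p\to CH^2(X)_p\to CH^2(S_K)_p\to0$ qui, jointe \`a la suite exacte inf\'erieure de (i), donne par une chasse au diagramme (utilisant (ii) pour l'injectivit\'e et la surjection de (i) pour la surjectivit\'e) un isomorphisme $\on{Coker}[CH^2(X)\to CH^2(\cl X)^G]_p\xrightarrow{\sim}\on{Coker}[CH^2(S_K)\to CH^2(S_L)^G]_p$. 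Enfin les suites exactes $0\to A_0(S_K)\to CH_0(S_K)\xrightarrow{\deg}\Z\to0$ et $0\to A_0(S_L)^G\to CH_0(S_L)^G\xrightarrow{\deg}\Z\to0$ (surjectives \`a droite puisque $S_K$ et $S_L$ sont d'indice $1$), munies du morphisme \'evident qui induit l'identit\'e sur les quotients par le degr\'e, fournissent par le lemme du serpent $\on{Coker}[A_0(S_K)\to A_0(S_L)^G]\xrightarrow{\sim}\on{Coker}[CH_0(S_K)\to CH_0(S_L)^G]$, d'o\`u le second isomorphisme.

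Le point le plus d\'elicat est l'annulation $H^1(G,\on{Num}(\cl S)\otimes J(\cl\F))=0$ de (i): elle ne conclut qu'une fois ce groupe isol\'e comme unique obstruction gr\^ace \`a la d\'ecomposition $G$-\'equivariante $CH^2(\cl X)=\sigma(\on{NS}(\cl S))\oplus\ker(q_*)$ --- sans celle-ci, on ne contr\^olerait l'obstruction qu'\`a un groupe $H^1(G,\on{NS}(\cl S)_p)$ a priori non nul pr\`es --- puis par la r\'esolution par des modules de permutation et le th\'eor\`eme de Lang. Un point secondaire \`a soigner, dans (ii), est l'identification de l'image directe propre avec l'op\'erateur $\theta_\h$ du lemme~\ref{endomi}.
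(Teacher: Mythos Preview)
Your proof is correct and follows essentially the same route as the paper: the retraction $q_*$ (equivalently, your section $\sigma$) to split off the $\on{Pic}(\cl S)$ summand in (i), Lang's theorem to kill $H^1(G,\on{Num}(\cl S)\otimes J(\cl\F))$ (the paper states the equivalent $H^1(\F,R\otimes\on{Pic}^0(\cl C))=0$ for any finitely generated $R$), Lemma~\ref{endomi} for (ii), and the diagram chase combining (i), (ii) and the localization sequence for (iii). One harmless slip: in your two-term resolution $0\to P_1\to P_0\to\on{Num}(\cl S)\to 0$, the kernel $P_1$ need not be a permutation module (Endo--Miyata only gives it as a direct summand of one), but your argument only uses $H^2(G,P_1\otimes J(\cl\F))=0$, which holds for any $P_1$ since $J(\cl\F)$ is torsion and $\on{cd}(\F)\le 1$.
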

	
	\begin{proof}  
		Sur $\cl{X}= \cl{C} \times \cl{S}$, les correspondances consid\'er\'ees dans la sous-section  \ref{coralgclos}
		induisent des homomorphismes $G$-\'equivariants.
		
		On consid\`ere la suite de cohomologie galoisienne associ\'ee \`a la suite	 tensoris\'ee par $\Z[1/p]$.
		Notons $M$ le noyau de $CH^2(\cl{X})\to CH^2(S_L)$. Par la suite de la proposition \ref{corresp2}(iii), on a $ (\on{Pic}(\cl{C})\otimes \on{Pic}(\cl{S}))_{{p}} \xrightarrow{\sim} M_{{p}}$. Ceci donne la premi\`ere suite exacte de (i). Par passage \`a la cohomologie galoisienne, on d\'eduit formellement une suite exacte
		$$ 0 \to (\on{Pic}(\cl{S}) \otimes \on{Pic}(\cl{C} ))_{{p}}^G \to CH^2(\cl{X})_{{p}}^G \to CH^2(S_{L})_{{p}}^G$$
		et le fait que la derni\`ere fl\`eche est surjective si et seulement si la fl\`eche
		$$H^1(G,  \on{Pic}(\cl{S}) \otimes \on{Pic}(\cl{C} ) )_{{p}} \to H^1(G, CH^2(\cl{X}))_{{p}}$$ 
		est injective. Ici on utilise le fait que, comme $\Z[1/p]$ est $\Z$-plat, la cohomologie et les $G$-invariants en particulier commutent avec tensorisation par $\Z[1/p]$.
		On a  :
		$$ \on{Pic}(\cl{S}) \otimes \on{Pic}(\cl{C} )=   \on{Pic}(\cl{S}) \oplus [ \on{Pic}(\cl{S}) \otimes \on{Pic}^0(\cl{C} )].$$
		On va maintenant utiliser le fait que le corps de base $\F$ est fini.
		Pour la courbe $C$ projective et lisse sur le corps fini $\F$, on a $H^1(\F, R \otimes  \on{Pic}^0(\cl{C} ))=0$ pour tout module galoisien de type fini $R$
		(cons\'equence du th\'eor\`eme de Lang).
		La fl\`eche  $ H^1(\F, \on{Pic}(\cl{S}) ) \to   H^1(\F,  \on{Pic}(\cl{S}) \otimes \on{Pic}(\cl{C} ))$ d\'efinie par le choix d'un z\'ero-cycle de degr\'e 1 sur $C$ est donc un isomorphisme.
		L'application compos\'ee  $$\on{Pic}(\cl{S}) \to \on{Pic}(\cl{S}) \otimes \on{Pic}(\cl{C} ) \to CH^2(\cl{X})  \to \on{Pic}(\cl{S}), $$ o\`u la troisi\`eme fl\`eche est
		induite par la projection $X \to S$, est l'identit\'e. On conclut que 
		$$H^1(\F,  \on{Pic}(\cl{S}) \otimes \on{Pic}(\cl{C} )) \to H^1(\F, CH^2(\cl{X}) )$$ est injectif, ce qui \'etablit (i).
		
		Appliquons maintenant  le lemme \ref{endomi} \`a
		$M=\on{Pic}(\cl{S})$ et $N= \on{Pic}(\cl{C})$. 
		Comme $\F$ est un corps fini, l'hypoth\`ese  $H^1(\F',\on{Pic}(\cl{C}))=0$ pour toute extension finie $\F'/\F$
		est satisfaite.
		Par ailleurs,
		pour $\F'/\F$ extension finie, 
		le sous-groupe $M_{\F'}$ de $M$ form\'e des invariants sous ${\rm{Gal}(\cl{\F}/\F'})$ est \'egal 
		\`a $\on{Pic}(S_{\F'})$. De m\^eme pour $N= \on{Pic}(\cl{C})$.
		D'apr\`es le lemme, l'image de
		$(\on{Pic}(\cl{S}) \otimes \on{Pic}(\cl{C} ))^G $ dans $CH^2(\cl{X})^G$ co\"{\i}ncide avec la r\'eunion des images
		des $M_{\F'} \otimes N_{\F'} $ par l'application compos\'ee de 
		$M_{\F'} \otimes N_{\F'}  \to CH^2(X_{\F'})$ et la norme, c'est-\`a-dire 
		la somme $\sum_{\sigma \in {\rm Gal}(\F'/\F)}	 \sigma$.
		L'application $$\sum_{\sigma \in {\rm Gal}(\F'/\F)} \sigma  : CH^2(X_{\F'})  \to CH^2(X_{\F'})$$ se factorise
		comme la compos\'ee de la norme $CH^2(X_{\F'}) \to CH^2(X)$ et de la fl\`eche
		de restriction $CH^2(X) \to CH^2(X_{\F'})$.  Ceci \'etablit (ii). 
		
		Comme $\F$ est un corps fini, la surface g\'eom\'etriquement int\`egre 
		$S/\F$ poss\`ede un z\'ero-cycle de degr\'e 1 (Lang--Weil).
		Comme la restriction 
		$CH^2(X) \to CH^2(S_{K})$ est surjective, l'\'enonc\'e (iii) r\'esulte des  deux autres \'enonc\'es.
	\end{proof}
	
	\begin{lemma}\label{albanese}
		Soient $k$ un corps et $X/k$ une vari\'et\'e projective, lisse et g\'eom\'etrique\-ment connexe. Supposons que $A_0(X_{\Omega})\otimes\Q=0$ pour toute extension alg\'ebriquement close $\Omega/k$. Alors $A_0(X_{\Omega})=0$ pour toute extension alg\'ebriquement close $\Omega/k$.
	\end{lemma}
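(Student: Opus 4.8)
Lemma \ref{albanese} states: if $A_0(X_\Omega) \otimes \mathbb{Q} = 0$ for all algebraically closed $\Omega/k$, then $A_0(X_\Omega) = 0$ for all such $\Omega$.

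Le plan est de déduire l'annulation entière de $A_0(X_\Omega)$ du théorème de Roitman, une fois établie la trivialité de la variété d'Albanese de $X$, cette dernière étant elle-même conséquence de l'hypothèse. Quitte à remplacer $k$ par $\cl{k}$, je commencerais par me ramener au cas où $k$ est algébriquement clos : l'hypothèse comme la conclusion ne portent que sur les extensions algébriquement closes de $k$, et toute telle extension contient une cl\^{o}ture algébrique de $k$.

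J'observerais d'abord que l'hypothèse $A_0(X_\Omega)\otimes\Q=0$ signifie exactement que $A_0(X_\Omega)$ est un groupe abélien de torsion, pour tout corps algébriquement clos $\Omega\supseteq k$. J'utiliserais ensuite que l'application d'Albanese induit une \emph{surjection} $A_0(X_\Omega)\twoheadrightarrow \on{Alb}_X(\Omega)$ lorsque $\Omega$ est algébriquement clos (l'image de $X(\Omega)$ engendre $\on{Alb}_X$ comme groupe algébrique, et un morphisme surjectif de $\Omega$-variétés est surjectif sur les $\Omega$-points). Donc $\on{Alb}_X(\Omega)$ serait de torsion pour tout tel $\Omega$. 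Mais si $\on{Alb}_X$ était de dimension $g\geq 1$, alors, sur $\Omega=\overline{k(\on{Alb}_X)}$, le point générique de $\on{Alb}_X$ donnerait un point non de torsion : s'il était annulé par un entier $n\geq 1$, il serait contracté par l'isogénie $[n]$, finie de degré $n^{2g}>1$, ce qui est absurde. On conclurait $\on{Alb}_X=0$ ; c'est aussi, sous une autre forme, une partie de la propriété (i) énoncée dans l'introduction, qu'on pourrait tout aussi bien citer.

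Il resterait alors à invoquer le théorème de Roitman sous la forme : pour $X$ projective, lisse et géométriquement connexe sur un corps algébriquement clos $\Omega$, l'application d'Albanese induit un isomorphisme du sous-groupe de torsion de $A_0(X_\Omega)$ sur celui de $\on{Alb}_X(\Omega)$, la partie première à la caractéristique étant le théorème original de Roitman et la partie $p$-primaire (en caractéristique positive) étant due à Milne. Puisque $A_0(X_\Omega)$ est de torsion et $\on{Alb}_X=0$, il viendrait $A_0(X_\Omega)=A_0(X_\Omega)_{\on{tors}}\hookrightarrow\on{Alb}_X(\Omega)=0$, d'où $A_0(X_\Omega)=0$, comme voulu.

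L'obstacle principal n'est pas dans l'argument formel, très court, mais dans la citation correcte du théorème de Roitman, et surtout de son raffinement $p$-primaire en caractéristique positive d\^u à Milne ; il faut aussi vérifier que la variété d'Albanese et son application universelle sont disponibles sans hypothèse de point rationnel sur $X$, ce qui est inoffensif ici puisque $\Omega$ est algébriquement clos.
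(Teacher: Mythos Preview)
Your proof is correct and follows essentially the same line as the paper: show that the Albanese variety vanishes by producing a non-torsion point from the generic point when $\dim\on{Alb}_X\geq 1$, then invoke Roitman's theorem (with Milne's extension to the $p$-part) to control the torsion of $A_0(X_\Omega)$.

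There is, however, a small but genuine difference in the final step. You observe directly that $A_0(X_\Omega)\otimes\Q=0$ means $A_0(X_\Omega)$ is a torsion group, so once Roitman gives $A_0(X_\Omega)_{\on{tors}}\hookrightarrow\on{Alb}_X(\Omega)=0$, you are done immediately. The paper instead deduces from Roitman that $A_0(X_\Omega)$ is \emph{torsion-free}, then invokes Bloch's result \cite[Lemma~1.3]{bloch1976some} that $A_0(X_\Omega)$ is divisible, hence uniquely divisible, and finally uses $A_0(X_\Omega)\otimes\Q=0$ to conclude. Your route is shorter and avoids the extra citation of Bloch's divisibility lemma; the paper's route, on the other hand, makes explicit the structural fact that $A_0(X_\Omega)$ is a $\Q$-vector space under these hypotheses, which may be of independent interest.
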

	
	\begin{proof}
		Soit $A$ la vari\'et\'e d'Albanese de $X$. Pour tout corps alg\'ebriquement clos $\Omega$ on a un homomorphisme surjectif $A_0(X_{\Omega})\to A(\Omega)$. Par hypoth\`ese, pour tout corps alg\'ebriquement clos $\Omega$, on a $A_{0}(X_{\Omega})\otimes \Q=0$, et donc $A(\Omega)\otimes \Q=0$. Comme $A$ est une vari\'et\'e ab\'elienne, si $A\neq 0$ il existe $\Omega$ tel que $A(\Omega)$ n'est pas de torsion. Par exemple, si on prend $\Omega$ contenant $k(A)$, le point de $A(\Omega)$ correspondant \`a l'identit\'e de $A$ n'est pas de torsion. On en d\'eduit que $A=0$. Par le th\'eor\`eme de Roitman \cite{roitman, blochroitman, milneroitman}, l'homomorphisme $A_0(X_{\Omega})\to A(\Omega)$ induit un isomorphisme sur la torsion. Comme $A(\Omega)=0$, on obtient que $A_0(X_{\Omega})$ est sans torsion. D'apr\`es \cite[Lemma 1.3]{bloch1976some}, le groupe $A_{0}(X_{\Omega})$ est divisible, et donc uniquement divisible. L'hypoth\`ese $A_0(X_{\Omega})\otimes \Q=0$ entra\^{\i}ne alors   $A_0(X_{\Omega})=0$.
	\end{proof}

	\begin{thm}\label{presquemainthm}  
		Soit $\F$ un corps fini de caract\'eristique $p$.  Soit $G={\rm Gal}(\cl{\F}/\F)$. 
		Soit $C/\F$ une courbe projective, lisse  et g\'eom\'etriquement connexe.
		Soit $K=\F(C)$ et $L=\cl{\F}(C)$.
		Soit $S/\F$ une surface projective, lisse et g\'eom\'etriquement connexe satisfaisant 
		$b_{1}=0$ et $b_{2}-\rho=0$, et soit $X:=S\times C$.  
		Avec les notations de la proposition \ref{corresp2fini},
		les \'enonc\'es
		(a), (b) (c)  
		suivants
		sont \'equivalents :
		
		(a)  On a $\on{Ker}[H^3_{\on{nr}}(\F(X),\Q_{\ell}/\Z_{\ell}(2)) \to H^3_{\on{nr}}(\cl{\F}(X),\Q_{\ell}/\Z_{\ell}(2))] =0$.
		
		(b)  Le conoyau, de torsion,  de l'application  naturelle  de groupes   $A_{0}(S_{K}) \to A_{0}(S_{L})^G$
		n'a pas de torsion $\ell$-primaire.
		
		Supposons de plus que la surface $S$ est g\'eom\'etriquement $CH_{0}$-triviale.
		Alors les conditions ci-dessus sont \'equivalentes \`a :
		
		(c) La fl\`eche
		$A_{0}(S_{\F(C)})\{\ell\} \to A_{0}(S_{\cl{\F}(C)})\{\ell\}^G$
		est surjective.
	\end{thm}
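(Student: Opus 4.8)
The proof will proceed by combining the structural results of \Cref{corresp2fini} with the identification of the unramified kernel furnished by \Cref{courbesurfaceqcque} and the exact sequence (\ref{ctkahn}). First I would recall that, since $S$ satisfies $b_1=0$ and $b_2-\rho=0$ for $\cl{S}$, the group $\on{Pic}(\cl{S})$ is finitely generated and the discriminant of the intersection form on $\on{Num}(\cl{S})$ has the shape $\pm p^r$; this is exactly the hypothesis needed to apply \Cref{corresp2fini}. By \Cref{courbesurfaceqcque}(c) applied to $X=C\times S$ (a product of a curve and a surface), one has a canonical isomorphism
\[
\on{Ker}[H^3_{\on{nr}}(\F(X),\Q_{\ell}/\Z_{\ell}(2))\to H^3_{\on{nr}}(\cl{\F}(X),\Q_{\ell}/\Z_{\ell}(2))]\;\xrightarrow{\sim}\;\on{Coker}[CH^2(X)\to CH^2(\cl{X})^G]\{\ell\}.
\]
So (a) is equivalent to the vanishing of the $\ell$-primary part of $\on{Coker}[CH^2(X)\to CH^2(\cl{X})^G]$.

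Next I would transport this cokernel down to the surface $S_L$ using \Cref{corresp2fini}(iii): that statement gives, away from $p$, an isomorphism
\[
\on{Coker}[CH^2(X)\to CH^2(\cl{X})^G]_{p}\;\xrightarrow{\sim}\;\on{Coker}[CH^2(S_K)\to CH^2(S_L)^G]_{p}\;\cong\;\on{Coker}[A_0(S_K)\to A_0(S_L)^G]_{p},
\]
the last identification because $S$ has a zero-cycle of degree $1$ over $\F$ by Lang--Weil, so that $CH^2(S_K)\to CH^2(S_L)^G$ differs from $A_0(S_K)\to A_0(S_L)^G$ only by the degree summand, on which it is surjective. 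Since $\ell\neq p$, the $\ell$-primary part of $\on{Coker}[CH^2(X)\to CH^2(\cl{X})^G]$ is the same whether computed before or after inverting $p$, and likewise for $\on{Coker}[A_0(S_K)\to A_0(S_L)^G]$. Combining the two isomorphisms, the $\ell$-primary part of $\on{Coker}[H^3_{\on{nr}}$ restriction$]$ vanishes iff $\on{Coker}[A_0(S_K)\to A_0(S_L)^G]$ has no $\ell$-torsion. This is exactly the equivalence of (a) and (b).

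For the equivalence with (c) under the extra hypothesis that $S$ is geometrically $CH_0$-trivial, I would first invoke \Cref{albanese}: geometric $CH_0$-triviality forces $A_0(S_\Omega)=0$ for $\Omega$ algebraically closed, and an easy specialization/norm argument (or the fact that $A_0$ is a torsion group over any field containing $\F$) shows $A_0(S_K)$ and $A_0(S_L)$ are torsion groups, indeed $\bigcup$ over finite extensions of their subgroups. Being torsion, $A_0(S_K)=\bigoplus_{\ell'}A_0(S_K)\{\ell'\}$ and likewise for $A_0(S_L)$, and the map $A_0(S_K)\to A_0(S_L)^G$ respects this primary decomposition; hence its $\ell$-primary part is surjective iff the restricted map $A_0(S_K)\{\ell\}\to A_0(S_L)\{\ell\}^G$ is surjective. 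But surjectivity of $A_0(S_K)\{\ell\}\to A_0(S_L)\{\ell\}^G$ is precisely the statement that $\on{Coker}[A_0(S_K)\to A_0(S_L)^G]$ has no $\ell$-torsion, which is (b). Thus (b) $\Leftrightarrow$ (c). The only delicate point is the passage through \Cref{corresp2fini}, where one must be careful that all the correspondence-induced maps are $G$-equivariant and that the finite $p$-primary kernels and cokernels appearing there genuinely disappear upon localizing at $\ell$; I expect that bookkeeping — rather than any conceptual difficulty — to be the main thing to get right, since everything else is formal once \Cref{courbesurfaceqcque} and \Cref{corresp2fini} are in hand.
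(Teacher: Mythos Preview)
Your proposal is correct and follows essentially the same route as the paper: invoke \Cref{courbesurfaceqcque}(c) to identify the kernel in (a) with $\on{Coker}[CH^2(X)\to CH^2(\cl{X})^G]\{\ell\}$, transport this via \Cref{corresp2fini}(iii) to $\on{Coker}[A_{0}(S_{K})\to A_{0}(S_{L})^G]\{\ell\}$ to get (a)$\Leftrightarrow$(b), and then use that geometric $CH_{0}$-triviality makes $A_{0}(S_{K})$ and $A_{0}(S_{L})$ torsion to deduce (b)$\Leftrightarrow$(c). The paper's proof is slightly more terse (it re-derives the isomorphism of \Cref{courbesurfaceqcque}(c) via a counting argument from the exact sequence~(\ref{ctkahn})), but the content is the same.
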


	\begin{proof} 
		Par le corollaire \ref{courbesurfaceqcque}, on a un isomorphisme de groupes finis
		\[\on{Ker}[CH^2(X)\to CH^2(\cl{X})]\{\ell\}\xrightarrow{\sim}  H^1(\F, H^3(\cl{X},\Z_{\ell}(2))_{\rm tors} ).\] 
		Sans m\^eme avoir \`a  identifier les fl\`eches, un argument de comptage et la suite exacte
		(\ref{ctkahn}) donnent un isomorphisme
		$$\hspace*{-5cm}\on{Ker}[H^3_{\on{nr}}(\F(X),\Q_{\ell}/\Z_{\ell}(2)) \to H^3_{\on{nr}}(\cl{\F}(X),\Q_{\ell}/\Z_{\ell}(2))]$$
		$$\hspace*{6.8cm}\xrightarrow{\sim} \on{Coker}[CH^2(X)\to CH^2(\cl{X})^G]\set{\ell},$$
		et par la proposition \ref{corresp2fini}  ce  groupe est isomorphe \`a  $\on{Coker}[A_{0}(S_{K})\to A_{0}(S_{L})^G]\{\ell\}$.	
		Ceci donne l'\'equivalence de (a) et (b).
		Si la surface $S$ est 	g\'eom\'etriquement $CH_{0}$-triviale, alors les groupes 
		$A_{0}(S_{\F(C)})$ et $A_{0}(S_{\cl{\F}(C)})$ sont de torsion, ce qui donne l'\'equivalence de (b) et (c).
	\end{proof}

	\begin{thm}\label{presquemainthmTate}
		Supposons  que la conjecture de Tate vaut pour toutes les surfaces sur les corps finis. 
		Avec les notations et hypoth\`eses du th\'eor\`eme \ref{presquemainthm}, sous l'hypoth\`ese que
		la surface $S$ est g\'eom\'etriquement $CH_{0}$-triviale,
		les \'enonc\'es (a), (b) ou (c) impliquent:

		Le groupe $H^3_{\on{nr}}(\F(X),\Q_{\ell}/\Z_{\ell}(2))$ est nul,  et l'application cycle $CH^2(X) \otimes \Z_{\ell} \to H^4(X,\Z_{\ell}(2))$ est surjective.
	\end{thm}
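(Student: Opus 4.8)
The plan is to reduce the two conclusions to the single vanishing of $H^3_{\on{nr}}(\F(X),\Q_{\ell}/\Z_{\ell}(2))$, and then to establish that vanishing by combining the injectivity statement (a) of the théorème~\ref{presquemainthm} with a geometric argument over $\cl{\F}$ that uses the theorem of Schoen. For the reduction: since $X=C\times_{\F}S$ with $S$ géométriquement $CH_{0}$-triviale, one has $H^1(\cl{S},\Q_{\ell})=0$, hence by the Künneth formula $H^2(\cl{X},\Q_{\ell}(1))\simeq H^2(\cl{S},\Q_{\ell}(1))\oplus H^2(\cl{C},\Q_{\ell}(1))$ as $G$-modules; the Tate conjecture for divisors on the surface $S$ (contained in the hypothesis) together with its triviality for the curve $C$ yields the Tate conjecture for divisors on the solid $X$. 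By the recollections of \S\ref{cyclesTate} (see the discussion of (\ref{nr-trivial}) in the introduction), the vanishing of $H^3_{\on{nr}}(\F(X),\Q_{\ell}/\Z_{\ell}(2))$ together with that Tate conjecture implies the surjectivity of the cycle map $CH^2(X)\otimes\Z_{\ell}\to H^4(X,\Z_{\ell}(2))$. It is therefore enough to prove $H^3_{\on{nr}}(\F(X),\Q_{\ell}/\Z_{\ell}(2))=0$.

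By the théorème~\ref{presquemainthm}, $S$ being géométriquement $CH_{0}$-triviale, hypothesis (c) (or (b)) is equivalent to (a), i.e. to the injectivity of the restriction
\[
H^3_{\on{nr}}(\F(X),\Q_{\ell}/\Z_{\ell}(2))\longrightarrow H^3_{\on{nr}}(\cl{\F}(X),\Q_{\ell}/\Z_{\ell}(2)).
\]
So it remains to show that the geometric group $H^3_{\on{nr}}(\cl{\F}(X),\Q_{\ell}/\Z_{\ell}(2))$ vanishes. I would first show it is annihilated by a fixed integer $N\geq 1$: the group $A_{0}(\cl{S}_{\cl{\F}(S)})$ being torsion, the method of Bloch--Srinivas provides a decomposition $N\,\Delta_{\cl{S}}=\cl{S}\times z+Z$ in $CH^2(\cl{S}\times\cl{S})$, where $z$ is a $0$-cycle of degree $N$ on $\cl{S}$ and $Z$ is supported on $D\times\cl{S}$ for a curve $D\subset\cl{S}$. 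Letting this correspondence act on $H^3_{\on{nr}}(\cl{\F}(C\times S),\Q_{\ell}/\Z_{\ell}(2))$ (unramified cohomology being functorial for correspondences, $\Delta_{\cl{S}}$ acting as the identity), the first term $\cl{S}\times z$ gives an action that factors through $H^3(\cl{\F}(C),\Q_{\ell}/\Z_{\ell}(2))$, and the second term $Z$ through $H^3(\cl{\F}(C\times\widetilde{D}),\Q_{\ell}/\Z_{\ell}(2))$, $\widetilde{D}$ being a desingularisation of $D$; both groups vanish since $\cl{\F}$ is algebraically closed, so that $\cl{\F}(C)$ and $\cl{\F}(C\times\widetilde{D})$ have cohomological dimension $\leq 2$. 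Hence $N\cdot H^3_{\on{nr}}(\cl{\F}(X),\Q_{\ell}/\Z_{\ell}(2))=0$.

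It would then remain to check that $H^3_{\on{nr}}(\cl{\F}(X),\Q_{\ell}/\Z_{\ell}(2))$ is $\ell$-divisible, for such a group annihilated by $N$ is zero. This is where the Tate conjecture for surfaces enters: by the theorem of Schoen (\Cref{schoen}), the cycle map $CH^2(\cl{X})\otimes\Z_{\ell}\to H^4(\cl{X},\Z_{\ell}(2))^{(1)}$ is surjective, and since $X=C\times S$ with $S$ géométriquement $CH_{0}$-triviale one has $H^4(\cl{X},\Z_{\ell}(2))^{(1)}=H^4(\cl{X},\Z_{\ell}(2))$ (see the discussion following the théorème~\ref{mainthm3}), so $CH^2(\cl{X})\otimes\Z_{\ell}\to H^4(\cl{X},\Z_{\ell}(2))$ is onto. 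A dévissage along the coniveau filtration (Bloch--Ogus) then relates $H^3_{\on{nr}}(\cl{\F}(X),\Q_{\ell}/\Z_{\ell}(2))$ to $H^3(\cl{X},\Q_{\ell}/\Z_{\ell}(2))$ and to $CH^2(\cl{X})\{\ell\}$: the non-divisible part of $H^3(\cl{X},\Q_{\ell}/\Z_{\ell}(2))$, detected via the Kummer sequence by $H^4(\cl{X},\Z_{\ell}(2))_{\on{tors}}$, is absorbed by the image of $CH^2(\cl{X})\{\ell\}$, so that $H^3_{\on{nr}}(\cl{\F}(X),\Q_{\ell}/\Z_{\ell}(2))$ is $\ell$-divisible, hence zero. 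The théorème~\ref{presquemainthm} then gives $H^3_{\on{nr}}(\F(X),\Q_{\ell}/\Z_{\ell}(2))=0$, which together with the first paragraph concludes.

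The main obstacle will be the divisibility step of the last paragraph: deducing from Schoen's theorem that the torsion of $H^4(\cl{X},\Z_{\ell}(2))$ is generated by algebraic cycles, and then tracking precisely, in the Bloch--Ogus spectral sequence, the interaction between the coniveau filtration, torsion and divisibility. Equivalently, one can re-run the proof of the théorème~\ref{mainthm3} — whose hypothesis on $\cl{X}$ is exactly what Schoen's theorem supplies here — replacing (\ref{cond2}) by statement (c) of the théorème~\ref{presquemainthm}.
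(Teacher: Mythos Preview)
Your proposal is correct and follows essentially the same route as the paper: reduce to the vanishing of $H^3_{\on{nr}}(\cl{\F}(X),\Q_{\ell}/\Z_{\ell}(2))$ via statement~(a), then obtain that vanishing by combining a correspondence argument (finite exponent) with Schoen's theorem (divisibility). The only cosmetic difference is in the reduction step for the cycle map: you deduce Tate for divisors on $X$ from K\"unneth and then invoke Proposition~\ref{coktatefini}, whereas the paper uses the surjectivity of $CH_0(C_\Omega)\to CH_0(X_\Omega)$ together with \cite[Prop.~3.23]{colliot2013cycles} to get the finite cokernel directly; both arguments feed into Th\'eor\`eme~\ref{KCTK} in the same way, and your explicit Bloch--Srinivas computation together with the divisibility step is exactly what is packaged in Corollaire~\ref{dim3finialgclos}.
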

	
	\begin{proof} 		
		Soit $\iota: \cl{C}\hookrightarrow \cl{C}\times \cl{S}$ une immersion ferm\'ee associ\'ee \`a un $\cl{\F}$-point de $\cl{S}$. Soit $\Omega$ un corps alg\'ebriquement clos contenant $\F$. Par le lemme \ref{albanese}, on a $A_0(S_{\Omega})=0$. 
		On d\'eduit ais\'ement que l'homomorphisme ${CH}_0(C_{\Omega})\to {CH}_0(X_{\Omega})$ induit par $\iota$ est surjectif.
		
		Par le corollaire \ref{dim3finialgclos} ci-dessous, qui utilise un th\'eor\`eme de C. Schoen (c'est ici qu'on utilise la conjecture de Tate pour les surfaces), on a alors  $$H^3_{\on{nr}}(\cl{\F}(X),\Q_{\ell}/\Z_{\ell}(2))=0.$$ Sous l'hypoth\`ese (a) on a donc $$H^3_{\on{nr}}(\F(X),\Q_{\ell}/\Z_{\ell}(2))=0.$$  Par ailleurs, la surjectivit\'e de  ${CH}_0(C_{\Omega})\to {CH}_0(X_{\Omega})$ et  
		\cite[Prop. 3.23]{colliot2013cycles} donnent que  
		l'application cycle
		$CH^2(X) \otimes \Z_{\ell} \to H^4(X,\Z_{\ell}(2))$ pour le solide $X$
		a  son conoyau fini.  D'apr\`es le th\'eor\`eme \ref{KCTK} rappel\'e ci-dessous, ce 
		conoyau
		est alors \'egal
		\`a un quotient de $H^3_{\on{nr}}(\F(X),\Q_{\ell}/\Z_{\ell}(2))$, et donc est  nul. 
		On a donc montr\'e que  l'application cycle
		\[ CH^2(X) \otimes \Z_{\ell} \to H^4(X,\Z_{\ell}(2))\]
		est surjective.
	\end{proof}

	\begin{thm}\label{t}
		Soient $k$ un corps de caract\'eristique $p\geq 0$, $\cl{k}$ une cl\^oture s\'eparable de $k$, $S$ une $k$-surface projective et lisse g\'eom\'etriquement int\`egre, et $\cl{S}:=S\times_k\cl{k}$. Supposons que $S$ est g\'eom\'etriquement $CH_{0}$-trivale et que  $S(k)\neq \emptyset$. 
		Soit $m\geq 1$ un entier tel que $m\cdot \on{NS}(\cl{S})_{\rm tors}=0$.
		Soit $n\geq 1$ le  degr\'e d'une extension finie galoisenne $K/k$
		telle qu'apr\`es extension \`a $K$ le module galoisien $\on{NS}(\cl{S})/{\rm tors}$
		soit de permutation. Soit $N=mn$.

		Pour tout corps $F$ contenant $k$, on  a  $N \cdot A_{0}(S_{F})=0$.
	\end{thm}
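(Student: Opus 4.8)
L'idée de la démonstration est de combiner la théorie de la $\mc{K}_{2}$-cohomologie des surfaces de \cite{colliot1985k} avec un argument élémentaire de restriction--corestriction.

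\emph{Réductions.} La $CH_{0}$-trivialité géométrique de $S$ assure, d'après les rappels de l'introduction, que $A_{0}(S_{F})$ est un groupe de torsion pour tout corps $F\supseteq k$ ; il suffit donc d'en borner l'exposant. Soit $F\supseteq k$ quelconque et posons $F'=FK$ ; comme $K/k$ est galoisienne, $F'/F$ est une extension galoisienne finie de degré $n'$ divisant $n$. Pour $\alpha\in A_{0}(S_{F})$ on a $\on{cor}_{F'/F}\on{res}_{F'/F}(\alpha)=n'\alpha$ ; si l'on sait que $m\cdot A_{0}(S_{F'})=0$, alors $m\cdot\on{res}_{F'/F}(\alpha)=0$, d'où $mn'\alpha=0$ et donc $N\alpha=mn\alpha=0$. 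Il suffit donc de traiter le cas où $F'\supseteq K$ et de montrer qu'alors $m\cdot A_{0}(S_{F'})=0$.

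\emph{Le cas $F'\supseteq K$.} Notons $\cl{F'}$ une clôture séparable de $F'$ et $G'=\on{Gal}(\cl{F'}/F')$. Comme $S_{\cl{F'}}$ est $CH_{0}$-triviale et possède un point rationnel (via $S(k)\neq\emptyset$), la flèche degré $CH_{0}(S_{\cl{F'}})\to\Z$ est un isomorphisme à la $p$-torsion près, de sorte que $A_{0}(S_{F'})$ est le noyau de $CH^{2}(S_{F'})\to CH^{2}(S_{\cl{F'}})$. L'ingrédient clé, tiré de \cite{colliot1985k}, est que pour une surface $CH_{0}$-triviale sur un corps séparablement clos le groupe $H^{1}_{\on{Zar}}(S_{\cl{F'}},\mc{K}_{2})$ est isomorphe à $\cl{F'}^{*}\otimes\on{NS}(\cl{S})$ modulo un sous-groupe uniquement divisible. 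Au moyen de la formule de Bloch--Quillen $CH^{2}(S_{F'})=H^{2}_{\on{Zar}}(S_{F'},\mc{K}_{2})$, de l'isomorphisme de Merkurjev--Suslin $\mc{K}_{2}/\ell^{r}\simeq\mc{H}^{2}(\mu_{\ell^{r}}^{\otimes 2})$ et des suites exactes du \S\ref{rappels}, jointes à la suite spectrale de Hochschild--Serre $H^{p}(G',H^{q}_{\on{Zar}}(S_{\cl{F'}},\mc{K}_{2}))\Rightarrow H^{p+q}_{\on{Zar}}(S_{F'},\mc{K}_{2})$ (et au fait que $H^{2}_{\on{Zar}}(S_{\cl{F'}},\mc{K}_{2})=CH_{0}(S_{\cl{F'}})$ est $\Z$ à la $p$-torsion près), on obtient que la torsion de $A_{0}(S_{F'})$ est un sous-quotient de $H^{1}(F',\cl{F'}^{*}\otimes\on{NS}(\cl{S}))$, le terme $E^{2,0}_{2}=H^{2}(G',H^{0}_{\on{Zar}}(S_{\cl{F'}},\mc{K}_{2}))$ ne contribuant pas, grâce au point rationnel et au théorème $90$ de Hilbert pour $\mc{K}_{2}$. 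Il reste à voir que $H^{1}(F',\cl{F'}^{*}\otimes\on{NS}(\cl{S}))$ est annulé par $m$. La suite exacte $0\to\cl{F'}^{*}\otimes\on{NS}(\cl{S})_{\on{tors}}\to\cl{F'}^{*}\otimes\on{NS}(\cl{S})\to\cl{F'}^{*}\otimes(\on{NS}(\cl{S})/\on{tors})\to 0$ ramène cela à deux points : d'une part $H^{1}(F',\cl{F'}^{*}\otimes\on{NS}(\cl{S})_{\on{tors}})$ est annulé par $m$, puisque $\on{NS}(\cl{S})_{\on{tors}}$ l'est ; d'autre part, comme $F'\supseteq K$, le module galoisien $\on{NS}(\cl{S})/\on{tors}$ est un module de permutation, donc $\cl{F'}^{*}\otimes(\on{NS}(\cl{S})/\on{tors})$ est une somme directe de modules induits à partir de $\cl{F'}^{*}$, et le lemme de Shapiro joint au théorème $90$ de Hilbert donne $H^{1}(F',\cl{F'}^{*}\otimes(\on{NS}(\cl{S})/\on{tors}))=0$. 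On conclut que $m\cdot A_{0}(S_{F'})=0$.

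\emph{Partie $p$-primaire et obstacle principal.} La composante $p$-primaire de $A_{0}(S_{F'})$ se traite par les analogues $p$-adiques des suites ci-dessus (faisceaux logarithmiques de de Rham--Witt), le cas des surfaces d'Enriques supersingulières en caractéristique $2$ -- pour lesquelles $q$ peut être non nul -- étant le plus délicat. L'obstacle essentiel est le cas $F'\supseteq K$ : il faut mener le dévissage spectral de \cite{colliot1985k} avec assez de soin pour isoler le sous-quotient $H^{1}(F',\cl{F'}^{*}\otimes\on{NS}(\cl{S}))$, contrôler les contributions des parties uniquement divisibles (invisibles en cohomologie galoisienne des groupes de torsion) et du terme $H^{0}_{\on{Zar}}(S_{\cl{F'}},\mc{K}_{2})$, et vérifier que ces énoncés s'appliquent uniformément à toutes les extensions $F'$ considérées.
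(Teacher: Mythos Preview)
Your sketch follows the same route as the paper, which simply refers to \cite[\S 3]{colliot1985k} and \cite[Th\'eor\`eme A.1(b), Remarque A.2(3)]{kahn2017torsion}: the restriction--corestriction reduction to $F'\supseteq K$, the identification of $A_{0}(S_{F'})$ (away from $p$) with a subquotient of $H^{1}(F',\cl{F'}^{*}\otimes\on{NS}(\cl{S}))$, and the Shapiro/Hilbert~90 argument on the permutation quotient are exactly the ingredients used there.

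One point deserves a warning. The ``suite spectrale de Hochschild--Serre'' $H^{p}(G',H^{q}_{\on{Zar}}(S_{\cl{F'}},\mc{K}_{2}))\Rightarrow H^{p+q}_{\on{Zar}}(S_{F'},\mc{K}_{2})$ that you invoke is not a formal construction: Zariski cohomology of $\mc{K}$-sheaves has no automatic Galois descent, and in particular $K_{2}(k(X))\to K_{2}(\cl{k}(X))^{G}$ is not an isomorphism in general. What \cite{colliot1985k} actually builds are specific exact sequences obtained by analysing the Gersten resolution term by term, using Hilbert~90 for $K_{2}$ (\cite{colliot1983hilbert}) and Merkurjev--Suslin to control the Galois cohomology of each piece; these sequences play the role of the low-degree terms of the spectral sequence you write down, but establishing them is precisely the ``d\'evissage spectral'' you flag in your last paragraph, not an input one can cite. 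Likewise, the reason the $E_{2}^{2,0}$ contribution vanishes (away from $p$) is that $H^{0}_{\on{Zar}}(S_{\cl{F'}},\mc{K}_{2})=K_{2}(\cl{F'})$ is uniquely divisible prime to $p$, rather than ``Hilbert~90 for $K_{2}$''. With these adjustments your outline is correct and matches the references the paper cites; note also that, as in the paper's proof, the argument as written only controls the prime-to-$p$ torsion.
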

	Le th\'eor\`eme \ref{t} et son corollaire \ref{t-cor} seront utilis\'es seulement dans le cas o\`u $k$ est alg\'ebriquement clos.
	
	\begin{proof}  Pour $k$  alg\'ebriquement clos de caract\'eristique quelconque, le r\'esultat
		est  \cite[Cor. 6.4 (a)]{kahn2017torsion}. Pour $k$ quelconque, il convient
		de suivre les d\'emonstrations du paragraphe 3 de  \cite{colliot1985k}.
		C'est fait dans \cite[Th\'eor\`eme A.1(b), Remarque A.2(3)]{kahn2017torsion} 
		sur un corps de caract\'eristique z\'ero. Cette restriction sur la caract\'eristique  est inutile
		car  
		les r\'esultats de  \cite{colliot1985k}
		sont \'etablis 	en toute caract\'eristique $p>0$ pour tout $\ell\neq p$. .
	\end{proof}
	
	\begin{cor}\label{t-cor}
		Sous les hypoth\`eses du th\'eor\`eme
		\ref{t} pour la surface $S$,
		soient $V$ une $k$-vari\'et\'e projective et lisse g\'eom\'etriquement int\`egre, $X:=V\times_kS$, $P\in S(k)$, et $f : V \hookrightarrow X$ donn\'e par $f(x)=(x,P)$.
		
		(i) Pour tout corps $F$ contenant $k$, on a
		\[N \cdot f_{*}(CH_{0}(V_{F}) ) = N \cdot CH_{0}(X_{F}).\]
		
		(ii) Si $H^{i}(k,\Q/\Z(j)) \to H^{i}_{\on{nr}}(k(V),\Q/\Z(j))$ est un isomorphisme,
		alors, pour tout $\ell$ qui ne divise pas $N $, l'homomorphisme \[H^{i}(k,\Q_{\ell}/\Z_{\ell}(j))  \to H^{i}_{\on{nr}}(k(X),\Q_{\ell}/\Z_{\ell}(j))\] est surjectif. 
	\end{cor}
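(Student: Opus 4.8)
The plan is to deduce (i) directly from Theorem~\ref{t}, and then to bootstrap (ii) out of (i) by a decomposition of the diagonal in the spirit of Bloch--Srinivas.

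For (i), only the inclusion $N\cdot CH_0(X_F)\subseteq N\cdot f_*(CH_0(V_F))$ needs proof. Let $q\colon X\to V$ be the first projection, so $q\circ f=\on{id}_V$, and treat the class of a single closed point $x$ of $X_F$. Put $v:=q(x)$ and $\delta:=[k(x):k(v)]$. The fibre of $q$ over $v$ is $S_{k(v)}$, in which $x$ is a closed point of degree $\delta$ over $k(v)$; since $P\in S(k)$ gives a point $P_{k(v)}\in S(k(v))$, the $0$-cycle $[x]-\delta[P_{k(v)}]$ lies in $A_0(S_{k(v)})$, which Theorem~\ref{t} kills by $N$. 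Pushing the relation $N[x]=N\delta[P_{k(v)}]$ forward along $S_{k(v)}\hookrightarrow X_F$ and noting that $P_{k(v)}$ maps to $f(v)=(v,P)$ yields $N[x]=Nf_*(\delta[v])$ in $CH_0(X_F)$, as required.

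For (ii), apply the argument of (i) to the $k(X)$-rational diagonal point $\delta_X$ of $X_{k(X)}=X\times_k\Spec k(X)$: with $w:=q(\delta_X)\in V(k(X))$ it gives $N[\delta_X]=N[f(w)]$ in $CH_0(X_{k(X)})$. Spreading this identity out over a dense open of the second factor of $X\times_k X$ and using the localization sequence, one obtains in $CH^{\dim X}(X\times_k X)$ a relation
\[ N\,\Delta_X=N\,\Gamma+\gamma, \]
with $\Gamma$ the graph of $f\circ q\colon X\to X$---so $\Gamma$ is supported on $f(V)\times X$, $f(V)\cong V$ being a closed subvariety of dimension $\dim X-2$---and $\gamma$ supported on $X\times Z$ for some proper closed subset $Z\subsetneq X$.

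Next I would let these correspondences act on $\alpha\in H^i_{\on{nr}}(k(X),\Q_{\ell}/\Z_{\ell}(j))$, using that for the smooth proper variety $X$ this group carries the standard correspondence action of $CH^{\dim X}(X\times_k X)$ (the action underlying Bloch--Srinivas type arguments; it is the functoriality of $A^0(-,-)$ in Rost's cycle modules), with $\Delta_X$ acting as the identity. The correspondence $\gamma$ acts as $0$: being supported on $X\times Z$ with $Z$ a proper closed subset, it does not meet the generic fibre of the second projection, so $\gamma_*\alpha$ has trivial generic value and hence vanishes. The correspondence $\Gamma=\Gamma_{f\circ q}$ acts as $(f\circ q)^*=q^*\circ f^*$; here $f^*\alpha$ is the restriction of $\alpha$ to the smooth subvariety $V$, so it lies in $H^i_{\on{nr}}(k(V),\Q_{\ell}/\Z_{\ell}(j))$, and the hypothesis of (ii)---valid for $\Q/\Z(j)$, hence for each $\Q_{\ell}/\Z_{\ell}(j)$---identifies this group with the image of $H^i(k,\Q_{\ell}/\Z_{\ell}(j))$. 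Thus $\Gamma_*\alpha=q^*f^*\alpha$ lies in the image of $H^i(k,\Q_{\ell}/\Z_{\ell}(j))\to H^i_{\on{nr}}(k(X),\Q_{\ell}/\Z_{\ell}(j))$, and feeding this into $N\alpha=(N\Delta_X)_*\alpha=N\Gamma_*\alpha+\gamma_*\alpha$ shows $N\alpha$ lies there too. Since $\ell\nmid N$, multiplication by $N$ is invertible on $\ell$-primary torsion, so $\alpha$ itself lies in that image, which is the assertion of (ii).

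The step I expect to be the main obstacle is the cohomological input of the last paragraph: identifying the action of the graph $\Gamma_{f\circ q}$ on unramified cohomology with $q^*\circ f^*$, checking that pullback of an unramified class along the closed immersion $f$ remains unramified, and the vanishing of the $\gamma$-action. All of this rests on the functoriality of unramified cohomology for morphisms of smooth $k$-varieties, and must be carried out together with careful bookkeeping of which factor of $X\times_k X$ carries the generic point, that is, of when the correspondences need to be transposed.
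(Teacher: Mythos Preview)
Your proposal is correct and follows the paper's route exactly. For (i) you give the fibrewise argument that the paper's very terse proof presumably intends, and for (ii) the paper simply writes ``Ceci suit de (i) en utilisant un argument comme dans \cite[Thm.~1.4]{auelctparimala}'', which is precisely the Bloch--Srinivas decomposition-of-the-diagonal argument you have spelled out; the technical points you flag (functoriality of $H^0_{\on{Zar}}(-,\mc{H}^i)$ for morphisms of smooth varieties, the vanishing of the action of a correspondence supported on $X\times Z$, and the bookkeeping of transposes) are all handled by the cycle-module formalism and the identification $H^i_{\on{nr}}(k(X))=H^0_{\on{Zar}}(X,\mc{H}^i)$ via Bloch--Ogus, as in \cite{colliot1995birational} and \cite{auelctparimala}.
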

	
	\begin{proof}
		(i) Il suffit de d\'emontrer que $N \cdot CH_{0}(X_{F})\subset N \cdot f_{*}(CH_{0}(V_{F}) )$. Soit  $D$ un z\'ero-cycle de $X_F$, 
		et
		soit $d$ le degr\'e de $D$. Par le th\'eor\`eme \ref{t}, $N(D-dP)=0$ dans $A_0(S_F)$, donc $N D=NdP$ est dans $N  f_{*}(CH_{0}(V_{F}))$. 
		
		(ii) Ceci suit de (i) en utilisant un argument comme dans \cite[Thm. 1.4]{auelctparimala}. 
	\end{proof}

	\begin{proof}[Preuve du th\'eor\`eme \ref{mainthm1}]
		Comme $\on{NS}(\cl{S})[\ell]=0$ et $S$ est g\'eom\'etriquement $CH_0$-triviale, on peut appliquer le th\'eor\`eme \ref{t} et le corollaire \ref{t-cor}(ii) avec $V=C$, $k=\cl{\F}$, $i=3$ et $j=2$
		et $F=\cl{\F}(C)$.
		On obtient que $A_0(S_{\cl{\F}(C)})\set{\ell}=0$ et, comme la $\ell$-dimension cohomologique de $\cl{\F}(C)$ est~$1$, $H^3_{\rm nr} (\cl{\F}(X),\Q_{\ell}/\Z_{\ell}(2))=0$. Par le th\'eor\`eme \ref{presquemainthm}, on d\'eduit que $H^3_{\on{nr}}(\F(X),\Q_{\ell}/\Z_{\ell}(2))=0$. 
		
		Par ailleurs, la fl\`eche  ${CH}_0(C_{\Omega})\to {CH}_0(X_{\Omega})$ est surjective pour tout corps alg\'ebriquement clos $\Omega$ contenant $\F$. Donc,   
		par \cite[Prop. 3.23]{colliot2013cycles} le conoyau
		$CH^2(X) \otimes \Z_{\ell} \to H^4(X,\Z_{\ell}(2))$ 
		est fini.  D'apr\`es le th\'eor\`eme \ref{KCTK} rappel\'e ci-dessous, ce conoyau est isomorphe
		\`a un quotient de $H^3_{\on{nr}}(\F(X),\Q_{\ell}/\Z_{\ell}(2))$, et donc il est nul. 
	\end{proof}

	\section{Z\'ero-cycles sur $S_{\cl{\F}(C)} $ et preuve des Th\'eor\`emes \ref{mainthm2} et  \ref{mainthm3}}\label{ktheoriedure}\label{conoyauCH2bis}

	\begin{thm}\label{a0complete}(Raskind)
		Soient $k$ un corps d'exposant caract\'eristique $p$ et $Z$ une $k$-vari\'et\'e projective, lisse, g\'eom\'etriquement connexe. 
		On a les propri\'et\'es suivantes :
		\begin{enumerate}[label=(\alph*)]
			\item L'application de restriction $CH^2(Z_{k[\![t]\!]})\to CH^2(Z_{k(\!(t)\!)})$ est un isomorphisme.
			
			\item Pour chaque premier $\ell\neq p$, on a une injection
			\[CH^2(Z_{\cl{k}[\![t]\!]})\set{\ell}\hookrightarrow CH^2(Z_{\cl{k}})\set{\ell},\] 
			
			\item Pour chaque premier $\ell\neq p$, on a une injection
			\[CH^2(Z_{\cl{k}(\!(t)\!)})\set{\ell}\hookrightarrow CH^2(Z_{\cl{k}})\set{\ell},\] 
		\end{enumerate} 
	\end{thm}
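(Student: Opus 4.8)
The plan is to establish (a) by a localisation argument, to deduce the equivalence of (b) and (c) from it, and to prove (b) by comparing $CH^2$ with $\ell$-adic \'etale cohomology through $\mathcal{K}$-cohomology. \emph{For (a):} write $\mathcal{O}=k[\![t]\!]$ and $K=k(\!(t)\!)$, and let $i\colon Z_k\hookrightarrow Z_{\mathcal{O}}$ be the closed fibre, a regular closed immersion of codimension $1$, with open complement $j\colon Z_K\hookrightarrow Z_{\mathcal{O}}$. The localisation exact sequence for Chow groups (with codimension indexing) reads
\[CH^1(Z_k)\xrightarrow{\ i_*\ }CH^2(Z_{\mathcal{O}})\xrightarrow{\ j^*\ }CH^2(Z_K)\to 0,\]
so it suffices to show $i_*=0$. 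The projection $\pi\colon Z_{\mathcal{O}}\to Z$ restricts to the identity on $Z_k\cong Z$, so each $D\in CH^1(Z_k)=\on{Pic}(Z)$ equals $i^*(\pi^*D)$; by the projection formula, $i_*(D)=i_*(i^*(\pi^*D)\cdot 1_{Z_k})=(\pi^*D)\cdot i_*(1_{Z_k})=(\pi^*D)\cdot[Z_k]$ in $CH^2(Z_{\mathcal{O}})$, where $[Z_k]\in\on{Pic}(Z_{\mathcal{O}})$ is the class of the divisor $Z_k$. But $[Z_k]=\on{div}(t)=0$ in $\on{Pic}(Z_{\mathcal{O}})$, since $Z_{\mathcal{O}}$ is integral and regular and $t$ is a regular function on it whose divisor is the reduced closed fibre. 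Hence $i_*=0$, $j^*$ is an isomorphism, and (a) holds. Taking $k=\cl{k}$ identifies $CH^2(Z_{\cl{k}[\![t]\!]})\{\ell\}$ with $CH^2(Z_{\cl{k}(\!(t)\!)})\{\ell\}$ compatibly with specialisation to $Z_{\cl{k}}$, so (b) and (c) are equivalent and it remains to prove (b).

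\emph{For (b):} take $\mathcal{O}=\cl{k}[\![t]\!]$ and let $\iota\colon Z_{\cl{k}}\hookrightarrow Z_{\mathcal{O}}$ be the closed fibre; the specialisation map of (b) is $\iota^*$. As $\cl{k}$ is separably closed, $\mathcal{O}$ is a strictly Henselian discrete valuation ring, and $Z_{\mathcal{O}}$ is regular with all its local rings containing the field $\cl{k}$; hence the Gersten conjecture holds on $Z_{\mathcal{O}}$ for Quillen $\mathcal{K}$-theory and, by the Bloch-Ogus-Gabber theorem, for $\ell$-adic \'etale cohomology. The arguments recalled in \S\ref{rappels}, together with the Merkurjev-Suslin isomorphism $\mathcal{K}_2/\ell^n\xrightarrow{\ \sim\ }\mathcal{H}^2(\mu_{\ell^n}^{\otimes 2})$, then give for each $Y\in\{Z_{\mathcal{O}},\,Z_{\cl{k}}\}$ a functorial exact sequence
\[0\to H^1_{\on{Zar}}(Y,\mathcal{K}_2)\otimes\Q_\ell/\Z_\ell\to H^1_{\on{Zar}}(Y,\mathcal{H}^2(\Q_\ell/\Z_\ell(2)))\to CH^2(Y)\{\ell\}\to 0\]
and a coniveau injection $H^1_{\on{Zar}}(Y,\mathcal{H}^2(\Q_\ell/\Z_\ell(2)))\hookrightarrow H^3(Y,\Q_\ell/\Z_\ell(2))$. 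Pulling back along $\iota$ produces a commutative ladder joining these two exact sequences.

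Two inputs now conclude the argument. First, $Z_{\mathcal{O}}\to\Spec\mathcal{O}$ is proper and smooth and $\mathcal{O}$ is strictly Henselian, so the proper and smooth base change theorems give an isomorphism $\iota^*\colon H^3(Z_{\mathcal{O}},\Q_\ell/\Z_\ell(2))\xrightarrow{\ \sim\ }H^3(Z_{\cl{k}},\Q_\ell/\Z_\ell(2))$; combined with the coniveau injections, this forces the middle vertical map of the ladder, namely $\iota^*$ on $H^1_{\on{Zar}}(-,\mathcal{H}^2(\Q_\ell/\Z_\ell(2)))$, to be injective. Second, the projection $\pi\colon Z_{\mathcal{O}}\to Z_{\cl{k}}$ is a retraction of $\iota$, so $\iota^*$ on $H^1_{\on{Zar}}(-,\mathcal{K}_2)$ is split surjective, and hence so is the left vertical map $\iota^*\otimes\Q_\ell/\Z_\ell$. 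By the snake lemma, $\ker(CH^2(Z_{\mathcal{O}})\{\ell\}\to CH^2(Z_{\cl{k}})\{\ell\})$ embeds into the cokernel of the left vertical map, which is zero. This proves (b), and with part (a) also (c).

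The substantive point is the smooth and proper base change isomorphism on $H^3$: by itself the retraction $\pi$ only makes $\iota^*$ split \emph{surjective}, and it is this base change isomorphism, propagated through the $\mathcal{K}$-cohomology presentation of $CH^2(-)\{\ell\}$, that yields the injectivity. The main technical obstacle is therefore to have the Gersten and Bloch-Ogus machinery available on $Z_{\mathcal{O}}$, which is smooth over a discrete valuation ring and not over a field; this is exactly what the equicharacteristic form of the Gersten conjecture supplies.
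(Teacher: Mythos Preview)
Your argument is correct and in fact more self-contained than the paper's, which simply quotes two results of Raskind. For (a), the paper invokes Raskind's four-term sequence
\[H^1_{\on{Zar}}(Z_{K},\mc{K}_{2}) \to \on{Pic}(Z) \to CH^2(Z_{\mathcal{O}}) \to CH^2(Z_{K}) \to 0\]
and shows the first map is surjective by exhibiting $[\mc L]\otimes t\mapsto[\mc L]$; your direct use of the Chow localisation sequence and the projection formula $i_*(i^*\pi^*D)=(\pi^*D)\cdot\on{div}(t)=0$ is the same computation stripped of its $K$-theoretic packaging, and is slightly more elementary. For (b), the paper just cites \cite[Theorem 1.9]{raskind1989torsion}; what you have written is essentially Raskind's proof of that theorem, so there is no genuine divergence of method. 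The one point to keep explicit is that the Gersten resolution for $\mathcal{K}_2$ and for $\mathcal{H}^2(\mu_{\ell^n}^{\otimes 2})$ on $Z_{\cl{k}[\![t]\!]}$ is not covered by Quillen's or Bloch--Ogus's original arguments (which are for smooth schemes over a field) but by the equicharacteristic extension via Popescu--Panin/Gabber; you flag this in your last sentence, and it is indeed the only delicate input.
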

	
	\begin{proof} 
		Par \cite[Proposition 1.2]{raskind1989torsion}, on a une suite exacte
		\[H^1_{\on{Zar}}(Z_{k(\!(t)\!)},\mc{K}_{2}) \to \on{Pic}(Z) \to CH^2(Z_{k[\![t]\!]}) \to CH^2(Z_{k(\!(t)\!)}) \to 0.\]
		Si $[\mc{L}]\in \on{Pic}(Z)$, la fl\`eche 
		\[\on{Pic}(Z) \otimes k(\!(t)\!)^* \to H^1_{\on{Zar}}(Z_{k(\!(t)\!)},\mc{K}_{2}) \to \on{Pic}(Z)\]
		envoie $[\mc{L}]\otimes t$ sur $\mc{L}$; voir le carr\'e commutatif dans la preuve de \cite[Proposition 1.3]{raskind1989torsion}.
		La fl\`eche $H^1_{\on{Zar}}(Z_{k(\!(t)\!)},\mc{K}_{2}) \to \on{Pic}(Z)$ est donc surjective.
		On a donc
		$CH^2(Z_{k[\![t]\!]}) =CH^2(Z_{k(\!(t)\!)})$.
		Par \cite[Theorem 1.9]{raskind1989torsion},  pour chaque premier $\ell\neq p$, on a une injection
		\[CH^2(Z_{\cl{k}[\![t]\!]})\set{\ell}\hookrightarrow CH^2(Z_{\cl{k}})\set{\ell}.\] 
		La combinaison de ces r\'esultats donne le th\'eor\`eme.
	\end{proof}

	On va utiliser le th\'eor\`eme \ref{general} ci-dessous, dont la d\'emonstration repose sur les r\'esultats 
	de Merkurjev et Suslin \cite{merkurjev1982k-cohomology}, sur
	le th\'eor\`eme 90 de Hilbert pour $K_{2}$ \cite{colliot1983hilbert}, et  sur de nombreux r\'esultats du travail \cite{colliot1985k}
	de Raskind et du premier auteur.
	En caract\'eristique z\'ero, on trouvera
	une d\'emonstration d\'etaill\'ee de ce th\'eor\`eme   dans
	l'article \cite{ctvoisin}: c'est le th\'eor\`eme 8.7 de \cite{ctvoisin}, o\`u
	l'hypoth\`ese $b_{2}-\rho=0$ est remplac\'ee par 
	l'hypoth\`ese $H^2(X,\mc{O}_{X})=0$, qui lui est \'equivalente
	en caract\'eristique nulle. 
	En caract\'eristique $p>0$, si on prend comme hypoth\`ese
	$b_{2}-\rho=0$ on voit en suivant  
	les divers arguments donn\'es dans \cite[thm. 8.7]{ctvoisin} et en se r\'ef\'erant 
	\`a \cite{colliot1985k} (voir en particulier \cite[Rem. 2.14]{colliot1985k} et \cite[Remark 3.7.1]{colliot1985k})
	que tout vaut \`a la $p$-torsion pr\`es. 
	Sur un corps parfait, le th\'eor\`eme  est  aussi \'etabli dans \cite[Thm. 6.3]{colliot2013cycles} et \cite[Thm. 6.6]{colliot2013cycles}. 
	C'est d'ailleurs ainsi que sur un corps fini la suite exacte (\ref{ctkahn}) est \'etablie dans \cite{colliot2013cycles}.
	On notera  que si l'on prend pour $L$ un corps fini,
	on a la suite exacte d\'ecrite ci-dessous sans supposer
	$b_{2}-\rho=0$ .
	Sur un corps $L$ parfait de caract\'eristique $p>0$,  la $p$-torsion est
	aussi contr\^{o}l\'ee (Gros et Suwa \cite{grossuwa}).

	\begin{thm}\label{general}  Soit $L$ un corps de caract\'eristique $p\geq 0$,  et $\ell \neq p$ un premier.
		Supposons $\on{cd}_{\ell}(L) \leq 1$. Soit $\cl{L}$ une cl\^oture s\'eparable de $L$, et $G={\rm Gal}(\cl{L}/L)$.
		Soit $X$ une $L$-vari\'et\'e projective, lisse, g\'eom\'etriquement connexe poss\'edant un 
		z\'ero-cycle de degr\'e 1.
		Supposons que le rang $\rho$ du groupe de N\'eron-Severi de $\cl{X}$
		est \'egal au deuxi\`eme nombre de Betti $\ell$-adique $b_{2}
		=\dim H^2(\cl{X}, \Q_{\ell }  )$. On a alors une suite exacte naturelle
		$$\hspace*{-2cm} 0 \to \on{Ker}[CH^2(X)\{\ell\} \to CH^2(\cl{X})\{\ell\} ] \to H^1(G,   H^3(\cl{X},\Z_{\ell}(2))\{\ell\}   )   $$
		$$\hspace*{1cm}\to \on{Ker}[H^3_{\on{nr}}(L(X)/L, \Q_{\ell}/\Z_{\ell}(2)) \to H^3_{\on{nr}}(\cl{L}(X)/\cl{L}, \Q_{\ell}/\Z_{\ell}(2)) ]$$
		$$\hspace*{6cm}\to \on{Coker}[CH^2(X)  \to CH^2(\cl{X})^G] \{ \ell\}  \to 0.$$ 
	\end{thm}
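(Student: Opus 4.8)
The plan is to run the Bloch--Ogus machinery recalled in \S\ref{rappels} over the base $L$ rather than over a finite field, with $\on{cd}_{\ell}(L)\leq 1$ playing the role of $\on{cd}(\F)=1$, and with the hypothesis $\rho=b_2$ playing the role that Deligne's finiteness theorem plays in the finite-field case. Concretely, for $X$ smooth integral over $L$ and $n$ prime to $p$, the Gersten conjecture in $K$-theory (Quillen) and in \'etale cohomology (Bloch--Ogus), together with the Merkurjev--Suslin theorem \cite{merkurjev1982k-cohomology} and Hilbert 90 for $K_2$ \cite{colliot1983hilbert}, give the isomorphism $\mc{K}_2/n\xrightarrow{\sim}\mc{H}^2(\mu_n^{\otimes 2})$ and the two fundamental exact sequences of \S\ref{rappels}. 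Taking direct limits over powers of $\ell$ and writing $H^3_{\on{nr}}(L(X)/L,\Q_{\ell}/\Z_{\ell}(2))=H^0_{\on{Zar}}(X,\mc{H}^3(\Q_{\ell}/\Z_{\ell}(2)))$, I obtain a commutative ladder comparing $X/L$ with $\cl{X}/\cl{L}$ whose rows are
\[0\to H^1_{\on{Zar}}(X,\mc{K}_2)\otimes\Q_{\ell}/\Z_{\ell}\to H^1_{\on{Zar}}(X,\mc{H}^2(\Q_{\ell}/\Z_{\ell}(2)))\to CH^2(X)\{\ell\}\to 0\]
and
\[0\to H^1_{\on{Zar}}(X,\mc{H}^2(\Q_{\ell}/\Z_{\ell}(2)))\to H^3(X,\Q_{\ell}/\Z_{\ell}(2))\to H^3(L(X),\Q_{\ell}/\Z_{\ell}(2)).\]

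Next, the hypothesis $\rho=b_2$ (equivalently $H^2(X,\mc{O}_X)=0$ in characteristic zero) is used to suppress the $\mc{K}_2$-contribution. Here I would appeal to the $K$-cohomological analysis of Colliot-Th\'el\`ene and Raskind \cite{colliot1985k}: under $\rho=b_2$, and up to $p$-primary torsion ---whence the restriction $\ell\neq p$ when $\on{char}(L)=p>0$--- the composite $H^1_{\on{Zar}}(X,\mc{K}_2)\otimes\Q_{\ell}/\Z_{\ell}\to H^3(X,\Q_{\ell}/\Z_{\ell}(2))$ vanishes, and over $\cl{L}$ one obtains in addition that the coniveau differential $E_2^{0,3}\to E_2^{2,2}$ is zero, so that $H^3(\cl{X},\Q_{\ell}/\Z_{\ell}(2))\to H^3_{\on{nr}}(\cl{L}(X)/\cl{L},\Q_{\ell}/\Z_{\ell}(2))$ is surjective. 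Feeding this into the ladder yields
\[0\to CH^2(X)\{\ell\}\to H^3(X,\Q_{\ell}/\Z_{\ell}(2))\to H^3_{\on{nr}}(L(X)/L,\Q_{\ell}/\Z_{\ell}(2))\]
together with the analogous sequence over $\cl{L}$, exact also on the right. (Over a finite field this step is free: Deligne's theorem makes $H^3(X,\Q_{\ell}/\Z_{\ell}(2))$ finite, so the divisible group $H^1_{\on{Zar}}(X,\mc{K}_2)\otimes\Q_{\ell}/\Z_{\ell}$ has zero image, and this is exactly how (\ref{ctkahn}) is obtained without assuming $\rho=b_2$.)

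Then I would bring in Galois descent. Since $\on{cd}_{\ell}(L)\leq 1$, the Hochschild--Serre spectral sequence with coefficients $\Q_{\ell}/\Z_{\ell}(2)$, for $X/L$ and for $L(X)/L$, degenerates into short exact sequences $0\to H^1(G,H^2(\cl{X},\Q_{\ell}/\Z_{\ell}(2)))\to H^3(X,\Q_{\ell}/\Z_{\ell}(2))\to H^3(\cl{X},\Q_{\ell}/\Z_{\ell}(2))^G\to 0$ and its function-field analogue. Juxtaposing these with the comparison ladder of the previous step and running a diagram chase as in the derivation of (\ref{equiv1}) and (\ref{ctkahn}) produces a four-term exact sequence beginning with $\on{Ker}[CH^2(X)\{\ell\}\to CH^2(\cl{X})\{\ell\}]\hookrightarrow H^1(G,H^2(\cl{X},\Q_{\ell}/\Z_{\ell}(2)))$ and ending with a torsion quotient. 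Two final identifications complete the proof. First, the $\ell$-adic Kummer-type sequence $0\to H^2(\cl{X},\Z_{\ell}(2))\otimes\Q_{\ell}/\Z_{\ell}\to H^2(\cl{X},\Q_{\ell}/\Z_{\ell}(2))\to H^3(\cl{X},\Z_{\ell}(2))\{\ell\}\to 0$, combined with $\on{cd}_{\ell}(L)\leq 1$ and with $\rho=b_2$ (which controls the divisible summand $H^2(\cl{X},\Z_{\ell}(2))\otimes\Q_{\ell}/\Z_{\ell}$ through $\on{NS}(\cl{X})$), gives an isomorphism $H^1(G,H^2(\cl{X},\Q_{\ell}/\Z_{\ell}(2)))\xrightarrow{\sim}H^1(G,H^3(\cl{X},\Z_{\ell}(2))\{\ell\})$. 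Second, the right-exact sequence over $\cl{L}$ shows that the final torsion quotient computes $\on{Coker}[CH^2(X)\to CH^2(\cl{X})^G]\{\ell\}$, because modulo the rest of the diagram the quotient $H^3(\cl{X},\Q_{\ell}/\Z_{\ell}(2))^G/\on{Im}\,H^3(X,\Q_{\ell}/\Z_{\ell}(2))$ coincides with $CH^2(\cl{X})^G/\on{Im}\,CH^2(X)$. This is the asserted sequence.

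The main obstacle is the second step: controlling $H^1_{\on{Zar}}(\cl{X},\mc{K}_2)$ and the coniveau differentials over a separably closed field under $\rho=b_2$, which rests on the hard $K$-theoretic results of \cite{colliot1985k} ---Hilbert 90 for $K_2$, Merkurjev--Suslin, and the fine structure of the $K$-cohomology groups--- and which is precisely what necessitates both the hypothesis $\rho=b_2$ and the passage to the prime-to-$p$ part in positive characteristic. Everything else ---Bloch--Ogus, Merkurjev--Suslin, Hochschild--Serre, the diagram-chasing bookkeeping, and the Kummer rewriting--- is formal once that input is in hand; over a perfect field one may alternatively invoke \cite[Thm. 6.3, Thm. 6.6]{colliot2013cycles}, and in characteristic zero the detailed treatment is \cite[Thm. 8.7]{ctvoisin}.
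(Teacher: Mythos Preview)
Your outline is essentially the approach the paper itself indicates: the paper does not give a self-contained proof of this theorem but refers to \cite[Thm.~8.7]{ctvoisin} (characteristic zero), to \cite{colliot1985k} for the $K_2$-cohomological input needed to carry through in positive characteristic under $b_2=\rho$ away from $p$, and to \cite[Thm.~6.3, Thm.~6.6]{colliot2013cycles} over a perfect field---exactly the sources you invoke and in the same order of ideas (Bloch--Ogus, Merkurjev--Suslin, Hilbert~90 for $K_2$, Hochschild--Serre with $\on{cd}_\ell(L)\le 1$, and the Kummer rewriting $H^1(G,H^2(\cl X,\Q_\ell/\Z_\ell(2)))\simeq H^1(G,H^3(\cl X,\Z_\ell(2))\{\ell\})$). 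The one point you leave implicit is where the hypothesis of a zero-cycle of degree~$1$ enters: in the $K$-cohomological analysis of \cite{colliot1985k} it is used to split the relevant sequences and to control the Galois descent on $H^1_{\on{Zar}}(X,\mc K_2)$, so you should flag it there; otherwise your sketch matches the paper's treatment.
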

	
	\begin{cor}\label{maincorctr}
		Soit $L$ un corps de caract\'eristique $p\geq 0$,  et $\ell \neq p$ un premier.
		Supposons $\on{cd}_{\ell}(L) \leq 1$. Soit $\cl{L}$ une cl\^oture s\'eparable de $L$, et $G={\rm Gal}(\cl{L}/L)$.
		Soit $S$ une $L$-surface projective, lisse, g\'eom\'etriquement connexe.
		
		(a) Si l'on a $b_{1}=0$ et $b_{2}-\rho=0$, 
		alors on a une suite exacte naturelle
		$$ 0 \to A_{0}(S)\{\ell\}   \to H^1(G,   H^3(\cl{S},\Z_{\ell}(2))\{\ell\}   )    
		\to  H^3_{\on{nr}}(L(S)/L, \Q_{\ell}/\Z_{\ell}(2)).$$
		
		(b) Si de plus $S$ est
		g\'eom\'etriquement $CH_{0}$-triviale et poss\`ede un 
		z\'ero-cycle de degr\'e 1,
		alors on a une suite exacte naturelle
		$$ 0 \to A_{0}(S)\{\ell\}   \to H^1(G,   H^3(\cl{S},\Z_{\ell}(2))\{\ell\}   )    
		\to  H^3_{\on{nr}}(L(S)/L, \Q_{\ell}/\Z_{\ell}(2))  \to 0.$$
	\end{cor}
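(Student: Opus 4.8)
Le plan est de d\'eduire les deux \'enonc\'es, de mani\`ere presque formelle, du th\'eor\`eme \ref{general} appliqu\'e \`a la $L$-vari\'et\'e $X=S$, en identifiant trois des quatre termes de la suite exacte qu'il fournit. Pour le premier terme, on remarque que, $S$ \'etant une surface, $CH^2(S)=CH_0(S)$, et que l'image de la fl\`eche degr\'e $CH_0(S)\to\Z$ est sans torsion; la torsion de $CH_0(S)$ est donc contenue dans $A_0(S)$, d'o\`u $CH^2(S)\{\ell\}=A_0(S)\{\ell\}$, et de m\^eme $CH^2(\cl S)\{\ell\}=A_0(\cl S)\{\ell\}$. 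Il reste alors \`a voir que $A_0(\cl S)\{\ell\}=0$. Sous l'hypoth\`ese $b_1=0$ de (a), la vari\'et\'e de Picard r\'eduite de $\cl S$, donc sa vari\'et\'e d'Albanese, est triviale, et le th\'eor\`eme de Roitman sur la torsion du groupe de Chow des z\'ero-cycles (dans sa partie premi\`ere \`a $p$, qui est classique) donne $A_0(\cl S)_{\rm tors}=0$. Dans le cas (b), la $CH_0$-trivialit\'e g\'eom\'etrique implique $A_0(S_\Omega)\otimes\Q=0$ pour tout corps $\Omega$ alg\'ebriquement clos contenant $L$, donc $A_0(\cl S)=0$ par le lemme \ref{albanese}; de plus les propri\'et\'es (i) et (ii) des surfaces g\'eom\'etriquement $CH_0$-triviales rappel\'ees dans l'introduction fournissent $b_1=0$ et $\rho=b_2$. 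Dans les deux cas, $\on{Ker}[CH^2(S)\{\ell\}\to CH^2(\cl S)\{\ell\}]=A_0(S)\{\ell\}$.

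Pour le troisi\`eme terme, $\cl L$ \'etant s\'eparablement clos et $\ell\neq p$, on a $\on{cd}_\ell(\cl L)=0$, donc $\on{cd}_\ell(\cl L(S))\leq 2$, d'o\`u $H^3(\cl L(S),\Q_\ell/\Z_\ell(2))=0$ et, a fortiori, $H^3_{\on{nr}}(\cl L(S)/\cl L,\Q_\ell/\Z_\ell(2))=0$. Le troisi\`eme terme de la suite du th\'eor\`eme \ref{general} est donc \'egal \`a $H^3_{\on{nr}}(L(S)/L,\Q_\ell/\Z_\ell(2))$ tout entier. L'hypoth\`ese $\rho=b_2$ du th\'eor\`eme \ref{general} \'etant satisfaite (directement en (a), via la propri\'et\'e (ii) en (b)), sa suite exacte se r\'e\'ecrit
\begin{multline*}
0\to A_0(S)\{\ell\}\to H^1(G,H^3(\cl S,\Z_\ell(2))\{\ell\})\\
\to H^3_{\on{nr}}(L(S)/L,\Q_\ell/\Z_\ell(2))\to \on{Coker}[CH^2(S)\to CH^2(\cl S)^G]\{\ell\}\to 0.
\end{multline*}
Tronqu\'ee apr\`es son troisi\`eme terme, elle donne (a); pour cela on n'utilise que la partie exacte \`a gauche du th\'eor\`eme \ref{general}, qui vaut sans l'hypoth\`ese d'un z\'ero-cycle de degr\'e $1$, cette derni\`ere ne servant qu'\`a la surjectivit\'e sur le terme conoyau.

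Pour (b), la surface $S$ poss\`ede un z\'ero-cycle de degr\'e $1$, donc le th\'eor\`eme \ref{general} s'applique directement, et il suffit de voir que le quatri\`eme terme s'annule. La $CH_0$-trivialit\'e g\'eom\'etrique donne $A_0(\cl S)=0$; comme $\cl S$ poss\`ede un point rationnel, on a $CH_0(\cl S)=\Z$, avec action galoisienne triviale, d'o\`u $CH^2(\cl S)^G=\Z$. Le z\'ero-cycle de degr\'e $1$ rend la fl\`eche degr\'e $CH^2(S)=CH_0(S)\to\Z$ surjective, donc $\on{Coker}[CH^2(S)\to CH^2(\cl S)^G]=0$, et la suite ci-dessus se termine par $0$ \`a droite de $H^3_{\on{nr}}(L(S)/L,\Q_\ell/\Z_\ell(2))$, ce qui est (b). La naturalit\'e des fl\`eches est h\'erit\'ee du th\'eor\`eme \ref{general}.

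L'\'etape la moins formelle est l'annulation $A_0(\cl S)\{\ell\}=0$ --- c'est elle qui permet d'identifier le premier terme \`a $A_0(S)\{\ell\}$ lui-m\^eme plut\^ot qu'\`a un noyau --- et c'est l\`a que les hypoth\`eses $b_1=0$ (via Roitman) et la $CH_0$-trivialit\'e g\'eom\'etrique (via le lemme \ref{albanese}) interviennent r\'eellement; tout le reste est formel \`a partir du th\'eor\`eme \ref{general}. Le seul point demandant un peu de soin dans (a) est l'usage de la partie exacte \`a gauche du th\'eor\`eme \ref{general} en l'absence d'un z\'ero-cycle de degr\'e $1$.
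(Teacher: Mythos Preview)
Your proof follows essentially the same route as the paper's: apply Th\'eor\`eme~\ref{general} to $X=S$, identify the first term via Roitman (using $b_1=0$), kill the geometric $H^3_{\on{nr}}$ by the bound $\on{cd}_\ell(\cl L(S))\leq 2$, and for~(b) annihilate the cokernel term via $CH_0$-triviality plus the degree-$1$ zero-cycle. One small imprecision: in~(b) you assert $A_0(\cl S)=0$ via Lemma~\ref{albanese}, but $\cl L$ is only a \emph{separable} closure, not necessarily algebraically closed when $p>0$; the paper more carefully states that $A_0(\cl S)$ is $p$-primary torsion (pull back to the algebraic closure, where Lemma~\ref{albanese} applies, and use that the purely inseparable extension has $p$-power degree), which is all that is needed since only the $\ell$-part matters. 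Your observation that part~(a) uses only the left-exact portion of Th\'eor\`eme~\ref{general}, hence does not require the zero-cycle of degree~$1$, is a point the paper leaves implicit.
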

	
	\begin{proof} 
		Comme $S$ est  une surface, le corps $\cl{L}(S)$ est de $\ell$-dimension cohomo\-logique~2,
		donc $H^3(\cl{L}(S), \Q_{\ell}/\Z_{\ell}(2))=0$.  Si $b_{1}=0$, alors $A_{0}(\cl{S})\{\ell\}=0$
		par le th\'eor\`eme de Roitman \cite{roitman, blochroitman}. 
		L'\'enonc\'e  (a) r\'esulte alors du th\'eor\`eme \ref{general}.

		Si la surface $S$ est g\'eom\'etriquement $CH_{0}$-triviale, alors, d'apr\`es le lemme \ref{albanese},
		la fl\`eche degr\'e $CH_{0}(\cl{S}) \to \Z$ est
		surjective \`a noyau de torsion $p$-primaire, et 
		l'existence d'un z\'ero-cycle de degr\'e 1 sur   
		la surface $S$ donne alors
		$$\on{Coker}[CH^2(S)  \to CH^2(\cl{S})^G] \{ \ell\} =0.$$
		L'\'enonc\'e (b) r\'esulte alors du th\'eor\`eme \ref{general}.
	\end{proof}

	\begin{thm}\label{a0}
		Soient $k$ un corps parfait 
		d'exposant  caract\'eristique $p$ et $\cl{k}/k$ une cl\^{o}ture s\'eparable. Soient   $C$ et $S$ des $k$-vari\'et\'es projectives, lisses, g\'eom\'e\-tri\-que\-ment connexes de dimension $1$ et $2$, respectivement. Soit $X= S\times_{k}C$. Notons $K:=k(C)$ et $L:=\cl{k}(C)$.  
		
		(a) Si la surface $S$ satisfait $b_{1}=0$ et $b_{2}-\rho=0$, alors pour tout premier $\ell\neq p$, on a une suite exacte \footnote{Nous ne savons pas si la fl\`eche $A_0(S_L)\set{\ell}\to 
			{\rm Hom}({\rm Pic}(\cl{S})\{\ell\}, J(C)(\cl{k}))$, qui est d\'efinie par la $K$-th\'eorie alg\'ebrique, est induite par
			la fl\`eche $CH^2(S_{L}) \to {\rm Hom} ({\rm Pic}(\cl{S})_{\on{tors}}, {\rm Pic}(\cl{C}))$ obtenue par
			les correspondances \`a la proposition \ref{corresp2} (iv).}
		$\on{Gal}(\cl{k}/k)$-\'equivariante de groupes finis \[0 \to A_0(S_L)\set{\ell}\to 
		{\rm Hom}_{\Z}({\rm Pic}(\cl{S})\{\ell\}, J(C)(\cl{k})) \to H^3_{\rm nr} (\cl{k}(X)/\cl{k},\Q_{\ell}/\Z_{\ell}(2)) .\] 
		
		(b) Si de plus $S$ est
		g\'eom\'etriquement $CH_{0}$-triviale et poss\`ede un 
		z\'ero-cycle de degr\'e 1, alors, pour tout premier $\ell\neq p$, on a une suite exacte
		$\on{Gal}(\cl{k}/k)$-\'equivariante de groupes finis \[0 \to A_0(S_L)\set{\ell}\to 
		{\rm Hom}_{\Z}({\rm Pic}(\cl{S})\{\ell\}, J(C)(\cl{k})) \to H^3_{\rm nr} (\cl{k}(X)/\cl{k},\Q_{\ell}/\Z_{\ell}(2)) \to 0.\] 
	\end{thm}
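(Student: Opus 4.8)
The plan is to apply Corollaire~\ref{maincorctr} to the surface $S_{L}$ over $L=\cl{k}(C)$ and then to trim the resulting exact sequence down to its ``everywhere unramified'' part, the latter step resting on Raskind's Th\'eor\`eme~\ref{a0complete}. One has $\on{cd}_{\ell}(L)\le 1$, the invariants $b_{1}$ and $b_{2}-\rho$ of $S_{L}$ coincide with those of $S$, the surface $S_{L}$ has an $L$-point (because $\cl{k}\subset L$), and $\on{cd}_{\ell}(\cl{L}(S))\le 2$ kills $H^{3}_{\on{nr}}(\cl{L}(S)/\cl{L},\Q_{\ell}/\Z_{\ell}(2))$; so Corollaire~\ref{maincorctr}(a), applied to $S_{L}/L$, furnishes an exact sequence
\[0\to A_{0}(S_{L})\{\ell\}\xrightarrow{\ \theta\ }H^{1}\bigl(G_{L},H^{3}(\cl{S_{L}},\Z_{\ell}(2))\{\ell\}\bigr)\xrightarrow{\ \varepsilon\ }H^{3}_{\on{nr}}(\cl{k}(X)/\cl{k}(C),\Q_{\ell}/\Z_{\ell}(2)),\]
where $G_{L}=\on{Gal}(\cl{L}/L)$ and $L(S_{L})=\cl{k}(X)$; under the hypotheses of part~(b), geometric $CH_{0}$-triviality and a degree-one zero-cycle are inherited by $S_{L}$, so Corollaire~\ref{maincorctr}(b) makes $\varepsilon$ surjective.

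Next I identify the middle term. By proper smooth base change $H^{3}(\cl{S_{L}},\Z_{\ell}(2))=H^{3}(\cl{S},\Z_{\ell}(2))$ with $G_{L}$ acting trivially; since $b_{1}(\cl{S})=0$ this group is finite, and Poincar\'e duality on $\cl{S}$ together with the Kummer sequence identify it, as a module over $G=\on{Gal}(\cl{k}/k)$, with $M:=\on{Hom}(\on{Pic}(\cl{S})\{\ell\},\Q_{\ell}/\Z_{\ell}(1))$ (here $\on{Pic}(\cl{S})\{\ell\}=\on{NS}(\cl{S})\{\ell\}$, again because $b_{1}=0$). Hence $H^{1}(G_{L},M)=H^{1}(L,M)$ with $M$ a constant module; the coniveau spectral sequence of the smooth curve $C$ identifies the subgroup of classes unramified at all closed points of $C$ with the \'etale cohomology group $H^{1}(C,M)$, and Kummer theory on $C$ yields a canonical $G$-isomorphism
\[H^{1}(C,M)\ \cong\ \on{Hom}\bigl(\on{Pic}(\cl{S})\{\ell\},\,H^{1}(C,\Q_{\ell}/\Z_{\ell}(1))\bigr)\ =\ \on{Hom}\bigl(\on{Pic}(\cl{S})\{\ell\},\,J(C)(\cl{k})\bigr).\]

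The heart of the matter is to prove $\on{im}(\theta)\subseteq H^{1}(C,M)$. For a closed point $v$ of $C$ let $L_{v}\cong\cl{k}(\!(t)\!)$ be the completion, whose residue field is $\cl{k}$. By Raskind's Th\'eor\`eme~\ref{a0complete}(c), $CH^{2}(S_{L_{v}})\{\ell\}\hookrightarrow CH^{2}(\cl{S})\{\ell\}=A_{0}(\cl{S})\{\ell\}=0$, the last equality being Roitman's theorem with $b_{1}(\cl{S})=0$; hence $A_{0}(S_{L_{v}})\{\ell\}=0$. The construction of $\theta$ (Th\'eor\`eme~\ref{general}) is functorial for the base change $L\to L_{v}$, so for $\alpha\in A_{0}(S_{L})\{\ell\}$ the restriction of $\theta(\alpha)$ to $H^{1}(L_{v},M)$ is the image of $\alpha$ under $A_{0}(S_{L})\{\ell\}\to A_{0}(S_{L_{v}})\{\ell\}=0$, hence vanishes; as $v$ is arbitrary, $\theta(\alpha)$ is unramified at every closed point of $C$, i.e. $\theta(\alpha)\in H^{1}(C,M)$. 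Likewise a class of $H^{1}(C,M)$, being unramified on $C$, is carried by $\varepsilon$ to a class whose residue along every divisor of $X$ lying over a closed point of $C$ is trivial, so $\varepsilon\bigl(H^{1}(C,M)\bigr)\subseteq H^{3}_{\on{nr}}(\cl{k}(X)/\cl{k},\Q_{\ell}/\Z_{\ell}(2))$. The compatibility underlying both assertions, namely that between the localization maps in algebraic $K$-theory and on Chow groups of $S\times_{\cl{k}}C$ and the residue maps in $\ell$-adic cohomology, is the main technical obstacle, and is where the results of \cite{colliot1985k} are genuinely used.

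Restricting the sequence of the first paragraph to $H^{1}(C,M)$ now gives the exactness of
\[0\to A_{0}(S_{L})\{\ell\}\to\on{Hom}\bigl(\on{Pic}(\cl{S})\{\ell\},J(C)(\cl{k})\bigr)\to H^{3}_{\on{nr}}(\cl{k}(X)/\cl{k},\Q_{\ell}/\Z_{\ell}(2)),\]
which is~(a) (exactness at the middle follows from $\on{im}(\theta)\subseteq H^{1}(C,M)$ and exactness of the original sequence). For~(b), any class of $H^{3}_{\on{nr}}(\cl{k}(X)/\cl{k},\Q_{\ell}/\Z_{\ell}(2))$ lifts by Corollaire~\ref{maincorctr}(b) to some $c\in H^{1}(L,M)$; the local sequences at the points $v$, where $A_{0}(S_{L_{v}})\{\ell\}=0$ forces $H^{1}(L_{v},M)\hookrightarrow H^{3}_{\on{nr}}(L_{v}(S)/L_{v},\Q_{\ell}/\Z_{\ell}(2))$ and where the vertical residue of the chosen class vanishes, show that $c$ is already everywhere unramified, i.e. $c\in H^{1}(C,M)$; hence the last map is onto. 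All maps are functorial in $k$, hence $G$-equivariant, and the outer terms are finite.
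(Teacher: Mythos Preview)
Your overall strategy matches the paper's: apply Corollaire~\ref{maincorctr} to $S_{L}$ over $L=\cl{k}(C)$, compare with the local situations over the completions $L_{v}\cong\cl{k}(\!(t)\!)$ at the closed points $v\in\cl{C}$, use Th\'eor\`eme~\ref{a0complete} (Raskind) together with Roitman's theorem to get $A_{0}(S_{L_{v}})\{\ell\}=0$, and identify the ``unramified along $\cl{C}$'' part $H^{1}(\cl{C},M)$ of $H^{1}(L,M)$ with $\on{Hom}(\on{Pic}(\cl{S})\{\ell\},J(C)(\cl{k}))$. Part~(a) goes through along your lines (note that here $C$ should be $\cl{C}$ throughout, since $L=\cl{k}(C)$ is the function field of $\cl{C}$).

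There is, however, a genuine gap in your argument for the surjectivity in~(b). You lift a class $\gamma\in H^{3}_{\on{nr}}(\cl{k}(X)/\cl{k},\Q_{\ell}/\Z_{\ell}(2))$ to $c\in H^{1}(L,M)$ and want $c|_{L_{v}}=0$; since $H^{1}(L_{v},M)\hookrightarrow H^{3}_{\on{nr}}(L_{v}(S)/L_{v},\Q_{\ell}/\Z_{\ell}(2))$, this amounts to showing that the \emph{restriction} $\gamma|_{L_{v}(S)}$ vanishes in $H^{3}_{\on{nr}}(L_{v}(S)/L_{v},\Q_{\ell}/\Z_{\ell}(2))$. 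You justify this by saying ``the vertical residue of the chosen class vanishes'', but vanishing of the residue of $\gamma$ along the divisor $S_{v}\subset\cl{X}$ is strictly weaker than vanishing of $\gamma|_{L_{v}(S)}$: the former only says that $\gamma$ extends across the generic point of $S_{v}$, not that its image in $H^{3}(L_{v}(S))$ is zero. The paper closes this gap by invoking a theorem of Sato--Saito (\cite[Thm.~2.13]{satosaito}, \cite[Thm.~3.16]{CT15bbki}) applied to the smooth proper model $S\times_{\cl{k}}\cl{k}[\![t]\!]$, which gives precisely the implication
\[\gamma\in H^{3}_{\on{nr}}(\cl{k}(X)/\cl{k},\Q_{\ell}/\Z_{\ell}(2))\ \Longrightarrow\ \gamma|_{L_{v}(S)}=0\ \text{in}\ H^{3}_{\on{nr}}(L_{v}(S)/L_{v},\Q_{\ell}/\Z_{\ell}(2)).\]
This ingredient is not contained in \cite{colliot1985k}; without it your surjectivity argument for~(b) does not close.
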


	\begin{proof}
		Soit  $L=\cl{k}(C)$. Pour chaque $x\in \cl{C}^{(1)}$, on note par $L_x \simeq \cl{k}(\!(t)\!)$ le corps des fractions de l'anneau local compl\'et\'e de $\cl{C}$ en $x$.
		
		Donnons la d\'emonstration de (b). La d\'emonstration de (a) est identique, on enl\`eve simplement
		le z\'ero \`a droite dans les suites exactes ci-dessous. Le th\'eor\`eme de Sato et Saito cit\'e ci-dessous
		n'intervient que pour la partie (b).

		On applique le corollaire \ref{maincorctr}
		\`a $S_L$ et $S_{L_x}$ pour chaque $x\in \cl{C}^{(1)}$. Comme $\on{cd}(L)=\on{cd}(L_{x})=1$, pour tout $\ell\neq p$ on obtient le diagramme commutatif suivant:
		\[
		\adjustbox{scale=0.685,center}{ 
			\begin{tikzcd}
				0 \arrow[r] & A_0(S_L)\set{\ell} \arrow[r] \arrow[d]  &  H^1(L,H^3(\cl{S},\Z_{\ell}(2))_{\on{tors}}) \arrow[r] \arrow[d] & H^3_{\on{nr}}(L(S)/L,\Q_{\ell}/\Z_{\ell}(2)) \arrow[r] \arrow[d] & 0 \\
				0 \arrow[r] & \prod_{x\in \cl{C}^{(1)}} A_0(S_{L_{x}})\set{\ell} \arrow[r] & \prod_{x\in \cl{C}^{(1)}} H^1(L_{x},H^3(\cl{S},\Z_{\ell}(2))_{\on{tors}}) \arrow[r] & \prod_{x\in \cl{C}^{(1)}} H^3_{\on{nr}}(L_{x}(S)/L_{x},\Q_{\ell}/\Z_{\ell}(2))\arrow[r] & 0.  
			\end{tikzcd}
		}
		\]
		L'hypoth\`ese $b_{1}=0$ et le th\'eor\`eme de Roitman \cite{roitman, blochroitman} donnent $A_{0}(\cl{S})\{\ell\}=0$.
		D'apr\`es le th\'eor\`eme \ref{a0complete}, on a donc $A_0(S_{L_{x}})\set{\ell}=0$ pour tout $x\in \cl{C}^{(1)}$.

		Le noyau de \[H^3_{\on{nr}}(L(S)/L,\Q_{\ell}/\Z_{\ell}(2))\to \prod_{x\in \cl{C}^{(1)}} H^3_{\on{nr}}(L_{x}(S)/L_{x},\Q_{\ell}/\Z_{\ell}(2))\] est un sous-groupe de $H^3_{\on{nr}}(\cl{k}(X)/\cl{k},\Q_{\ell}/\Z_{\ell}(2))$, comme on voit en consid\'erant les r\'esidus aux points g\'en\'eriques des diviseurs $S_{x}  \subset X$.
		Ce noyau co\"{\i}ncide en fait avec $H^3_{\on{nr}}(\cl{k}(X)/\cl{k},\Q_{\ell}/\Z_{\ell}(2))$  : cela r\'esulte d'un th\'eor\`eme de K. Sato et S. Saito  (\cite[Thm. 2.13]{satosaito}, \cite[Thm. 3.16]{CT15bbki}) appliqu\'e au sch\'ema $S\times_{\cl{k}}\cl{k}[[t]]$.

		Du diagramme ci-dessus on d\'eduit donc une suite exacte
		$$ 0 \to A_0(S_L)\set{\ell} \to  B  \to  H^3_{\on{nr}}(\cl{k}(X)/\cl{k},\Q_{\ell}/\Z_{\ell}(2)) \to 0.$$
		o\`u
		$$ B:= {\rm Ker} [H^1(L,H^3(\cl{S},\Z_{\ell}(2))_{\on{tors}}) \to \prod_{x\in \cl{C}^{(1)}} H^1(L_{x}, H^3(\cl{S},\Z_{\ell}(2))_{\on{tors}})].$$

		Il nous reste \`a identifier ce groupe $B$.
		Comme $L$ contient $\cl{k}$, les actions des groupes $\on{Gal}(\cl{K}/L)$ et $\on{Gal}(\cl{L}_{x}/L_{x})$ sur le module $H^1(L,H^3(\cl{S},\Z_{\ell}(2))_{\on{tors}})$ sont triviales.	Pour tout $n\geq 1$, il est clair que \[H^1(\cl{C},\Z/\ell^n)\subseteq \on{Ker}[H^1(L,\Z/\ell^n)\to   \prod_{x\in \cl{C}^{(1)}}  H^1(L_x,\Z/\ell^n)].\]  Par ailleurs, on a une suite exacte
		\[0\to H^1(\cl{C},\Z/\ell^n)\to H^1(L,\Z/\ell^n)\to \prod_{x\in \cl{C}^{(1)}}H^0(\cl{k}(x),\mu_{{\ell}^n}^{\otimes(-1)});\] voir \cite[(3.7)]{colliot1995birational}.  Pour tout $x\in \cl{C}^{(1)}$, le r\'esidu $H^1(L,\Z/\ell^n)\to H^0(\cl{k}(x),\mu_{{\ell}^n}^{\otimes(-1)})$ se  factorise comme 
		\[H^1(L,\Z/\ell^n)\to H^1(L_{x},\Z/\ell^n) \to H^0(\cl{k}(x),\mu_{{\ell}^n}^{\otimes(-1)}),\] et donc \[\on{Ker}[H^1(L,\Z/\ell^n)\to   \prod_{x\in \cl{C}^{(1)}}  H^1(L_x,\Z/\ell^n)]= H^1(\cl{C},\Z/\ell^n).\]
		On en d\'eduit que le noyau de \[H^1(L,H^3(\cl{S},\Z_{\ell}(2))_{\on{tors}})\to \prod_{x\in \cl{C}^{(1)}} H^1(L_{x}, H^3(\cl{S},\Z_{\ell}(2))_{\on{tors}})\] co\"{\i}ncide avec $H^1(\cl{C}, H^3(\cl{S},\Z_{\ell}(2))_{\on{tors}})$.
		
		On a une suite exacte courte \[0\to \on{Br}(\cl{X})^{\circ} \{\ell\} \to \on{Br}(\cl{X})\{\ell\}\to H^3(\cl{S},\Z_{\ell}(2))\set{\ell}\to 0,\] o\`u $\on{Br}(\cl{X})^{\circ}\set{\ell}\simeq (\Q_{\ell}/\Z_{\ell})^{b_2-\rho}$ est le sous-groupe divisible maximal de $\on{Br}(\cl{X})\{\ell\}$; voir \cite[(8.9)]{grothendieck1968brauer3} ou \cite[Prop. 5.2.9]{ctskobrauer}. Comme $b_{1}=0$, on a $\on{Pic}(\cl{S}) = \on{NS}(\cl{S})$. D'apr\`es \cite[(8.12)]{grothendieck1968brauer3} ou \cite[Prop. 5.2.10]{ctskobrauer} 
		on a donc des isomorphismes
		\[H^3(\cl{S},\Z_{\ell}(2))\set{\ell}   \simeq \on{Hom}(\on{NS}(\cl{S})\set{\ell},\Q_{\ell}/\Z_{\ell}(1))\simeq \on{Hom}(\on{Pic}(\cl{S})\set{\ell},\Q_{\ell}/\Z_{\ell}(1)).\]  
		
		Notons $M:=H^3(\cl{S},\Z_{\ell}(2))\set{\ell}$.  L'isomorphisme ci-dessus dit que le dual de Cartier $\hat{M}=\on{Hom}(M, \Q_{\ell}/\Z_{\ell}(1))$ de $M$ est le module galoisien $\on{Pic}(\cl{S})\set{\ell}$.
		L'accouplement $M \times \hat{M} \to \Q_{\ell}/\Z_{\ell}(1)$ et la dualit\'e de Poincar\'e sur la courbe $\cl{C}$ donnent
		un accouplement \'equivariant non d\'eg\'en\'er\'e de groupes ab\'eliens finis
		$$ H^1(\cl{C},\hat{M}) \times H^1(\cl{C}, M )  \to \Q_{\ell}/\Z_{\ell}.$$
		Par ailleurs,  on a un accouplement \'equivariant non d\'eg\'en\'er\'e de groupes ab\'eliens finis
		$$  \hat{M} \times H^1(\cl{C}, M )    \to H^1(\cl{C},   \Q_{\ell}/\Z_{\ell}(1) )=   J(C)(\cl{\F}) \{\ell\}.$$
		On a donc des isomorphismes de modules galoisiens finis 
		\[ B \simeq H^1(\cl{C}, H^3(\cl{S},\Z_{\ell}(2))) 
		\simeq {\rm Hom}_{\Z}(\hat{M},J(C)(\cl{\F}) \{\ell\} ) 
		\simeq {\rm Hom}_{\Z}( {\rm Pic}(\cl{S})\{\ell\}, J(C)(\cl{\F})).\]
		Ceci compl\`ete la d\'emonstration.
	\end{proof}

	\begin{proof}[Preuve du th\'eor\`eme \ref{mainthm2}]
		Comme la surface $S$  est g\'eom\'etriquement $CH_0$-triviale, 
		elle satisfait $b_{1}=0$ et $b_{2}-\rho=0$.
		Par le th\'eor\`eme \ref{a0}(a) on a donc $A_0(S_{\cl{\F}(C)})\{\ell\}^G=0$. 
		Comme la surface $S$  est g\'eom\'etriquement $CH_0$-triviale, 
		le th\'eor\`eme \ref{presquemainthm} nous permet de conclure.
	\end{proof}

	\begin{proof}[Preuve du th\'eor\`eme \ref{mainthm3}]
		Par le th\'eor\`eme  \ref{KCTK}, le groupe $H^3_{\on{nr}}(\cl{\F}(X),\Q_{\ell}/\Z_{\ell}(2))$ est  divisible.
		Comme $S$ est $CH_{0}$-triviale, ce groupe est fini \cite[Prop. 3.2]{colliot2013cycles}.	
		Il est donc nul.
		Par le th\'eor\`eme \ref{a0}(a), sous la condition (\ref{cond2}), on a  $A_0(S_{\cl{\F}(C)})\{\ell\}^G=0$.
		Le th\'eor\`eme \ref{presquemainthm} donne alors  la nullit\'e de $H^3_{\on{nr}}(\F(X),\Q_{\ell}/\Z_{\ell}(2))$. Par le th\'eor\`eme \ref{KCTK}(ii), le conoyau de l'application $CH^2(X) \otimes \Z_{\ell} \to H^4(X,\Z_{\ell}(2))$ est donc sans torsion. 
		On conclut comme dans la preuve du th\'eor\`eme \ref{mainthm1}: la surjectivit\'e de  ${CH}_0(C_{\Omega})\to {CH}_0(X_{\Omega})$ et  
		\cite[Prop. 3.23]{colliot2013cycles} entra\^{\i}nent que le conoyau de
		$CH^2(X) \otimes \Z_{\ell} \to H^4(X,\Z_{\ell}(2))$ est fini, il est donc nul.
	\end{proof}

	\begin{rmk}
		Soient $B$ une courbe projective, lisse et g\'eom\'etriquement connexe sur $\C$, $S$ une surface d'Enriques sur $\C$, et $X:=B\times S$. On a $\on{Pic}(S)_{\on{tors}} = \Z/2\Z$. Le th\'eor\`eme \ref{a0} donne alors une suite exacte courte de groupes finis
		\[0\to A_0(S_{\C(E)})\to J(B)(\C)[2]\to H^3_{\on{nr}}(\C(X)/\C,\Q/\Z(2))\to 0.\] On note que $H^3_{\on{nr}}(\C(X)/\C,\Q_{\ell}/\Z_{\ell}(2))=0$ pour tout $\ell\neq 2$.
		On a un isomorphisme naturel $J(B)(\C)[2]\simeq H^1(B,\Z/2\Z)$, et l'homomorphisme de restriction $\rho:CH^2(X)\to CH^2(S_{\C(B)})$ est surjectif. On a donc une suite exacte
		\[CH_1(X)_0\to H^1(B,\Z/2\Z)\to H^3_{\on{nr}}(\C(X)/\C,\Q/\Z(2))\to 0,\] o\`u $CH_1(X)_0:=\rho^{-1}(A_0(S_{\C(B)}))$. D'apr\`es \cite[Th\'eor\`eme 1.1]{ctvoisin}, on d\'eduit que l'homomorphisme $CH_1(X)_0\to H^1(B,\Z/2\Z)$ est surjectif si et seulement si la conjecture de Hodge enti\`ere pour les $1$-cycles   vaut pour $X$.
		
		Soit $\alpha\in H^2(S,\Z/2\Z)$ l'\'el\'ement correspondant au rev\^etement double par la surface $K3$ associ\'ee \`a $S$. On peut esp\'erer
		que la fl\`eche $CH_1(X)_0\to H^1(B,\Z/2\Z)$ soit donn\'ee par $Z\mapsto Z^*\alpha$, o\`u $Z^*:H^1(S,\Z/2\Z)\to H^1(B,\Z/2\Z)$ est l'homomorphisme associ\'e \`a la correspondance $Z:S\rightsquigarrow B$.  Ceci donnerait une d\'emonstration alternative de  l'\'enonc\'e \cite[Proposition 1.1]{benoist2018failure} de Benoist et Ottem.
		
		Sur le corps des complexes, ces auteurs \'etablissent l'existence de paires $(S,B)$ avec $B$ courbe elliptique
		et $S$ surface d'Enriques telles que la conjecture de Hodge enti\`ere pour les 1-cycles vaille, et d'autres pour lesquelles elle ne vaille pas.
		Ainsi, via \cite{ctvoisin}, suivant la paire $(S,B)$, le groupe $H^3_{\on{nr}}(\C(X)/\C,\Q_{\ell}/\Z_{\ell}(2))$  est nul ou non. 
		Par contraste,  si l'on remplace $\C$ par $\cl{\F}$, le
		corollaire
		\ref{dim3finialgclos} ci-dessous, cons\'equence d'un th\'eor\`eme de Schoen,   dit que pour un tel solide $X=B\times S$ avec $S$ surface d'Enriques,  on devrait toujours avoir 
		$H^3_{\on{nr}}(\cl{\F}(X)/\cl{\F},\Q_{\ell}/\Z_{\ell}(2))=0$ et l'application cycle $CH^2(\cl{X})\otimes \Z_{\ell} \to H^4(\cl{X},\Z_{\ell}(2))$ devrait \^{e}tre surjective.		\end{rmk}

	\begin{rmk}\label{tateclassique}
		(i) Montrons que si $X$ satisfait les hypoth\`eses du th\'eor\`eme \ref{mainthm2} l'application cycle (\ref{tate-int1}) est surjective.
		
		Comme $H^*(\cl{C},\Z_{\ell})$ est sans torsion et $H^2(\cl{C},\Z_{\ell})\simeq \Z_{\ell}(-1)$, la formule de K\"unneth en cohomologie $\ell$-adique \cite[Chap. VI, Cor. 8.13]{milneetale}
		nous donne un isomorphisme $G$-\'equivariant
		\[H^4(\cl{X},\Z_{\ell}(2))\simeq H^4(\cl{S},\Z_{\ell}(2))\oplus (H^3(\cl{S},\Z_{\ell}(2))\otimes H^1(\cl{C},\Z_{\ell}))\oplus H^2(\cl{S},\Z_{\ell}(1)).\]
		Pour tout $G$-module $M$ de type fini sur $\Z_{\ell}$, la fl\`eche naturelle de cup-produit \[\cup_M:M\otimes H^1(\cl{C},\Z_{\ell})\to H^1(\cl{C},M)\]
		est un isomorphisme, comme l'on voit ais\'ement par r\'eduction aux cas $M=\Z_{\ell}$ et $M=\Z/\ell^n$, $n\geq 1$. Si on pose $M=H^3(\cl{S},\Z_{\ell}(2))$, on obtient un isomorphisme $G$-\'equivariant
		\[H^3(\cl{S},\Z_{\ell}(2))\otimes H^1(\cl{C},\Z_{\ell})\simeq H^1(\cl{C},H^3(\cl{S},\Z_{\ell}(2))).\]
		
		Comme $S$ est g\'eom\'etriquement $CH_0$-triviale, on sait que $b_3(\cl{S})=b_1(\cl{S})=0$, donc $H^3(\cl{S},\Z_{\ell}(2))$ est fini et, comme discut\'e \`a la fin de la preuve du th\'eor\`eme \ref{a0}, on a un isomorphisme $G$-\'equivariant
		\[H^1(\cl{C},H^3(\cl{S},\Z_{\ell}(2)))\simeq \on{Hom}_{\Z}(\on{Pic}(\cl{S})\set{\ell},J(C)(\cl{\F})).\]
		L'hypoth\`ese (\ref{cond2}) implique alors que $H^1(\cl{C},H^3(\cl{S},\Z_{\ell}(2)))^G$ est trivial, donc
		\[H^4(\cl{X},\Z_{\ell}(2))^G\simeq H^4(\cl{S},\Z_{\ell}(2))^G\oplus  H^2(\cl{S},\Z_{\ell}(1))^G.\]
		La fonctorialit\'e de l'application de cycle $\ell$-adique nous donne le carr\'e commutatif	suivant:	
		\[
		\begin{tikzcd}
			CH^2(S)\oplus CH^1(S)\arrow[r] \arrow[d] & CH^2(X) \arrow[d]\\
			H^4(\cl{S},\Z_{\ell}(2))^G\oplus  H^2(\cl{S},\Z_{\ell}(1))^G \arrow[r,"\sim"] & H^4(\cl{X},\Z_{\ell}(2))^G.
		\end{tikzcd}
		\]	
		Comme $\F$ est fini, $S$ admet un z\'ero-cycle de degree $1$ (Lang--Weil), c'est \`a dire, la fl\`eche de degr\'e
		$CH^2(S)\otimes\Z_{\ell}\to H^4(\cl{S},\Z_{\ell}(2))^G=\Z_{\ell}$
		est surjective.	Comme $S$ est g\'eom\'etriquement $CH_0$-triviale, la fl\`eche $CH^1(S)\otimes\Z_{\ell}\to H^2(\cl{S},\Z_{\ell}(1))^G$ est aussi surjective. On conclut alors que (\ref{tate-int1})
		est surjective, comme voulu. 
		
		(ii) Nous avons le diagramme commutatif suivant
		\[
		\begin{tikzcd}
			&& CH^2(X)\otimes_{\Z}\Z_{\ell} \arrow[dr,"\text{(\ref{tate-int1})}"]  \arrow[d,"\text{(\ref{tate-int3})}"] \\
			0 \arrow[r] & H^1(\F,H^3({\cl{X},\Z_{\ell}(2)})) \arrow[r] & H^4(X,\Z_{\ell}(2)) \arrow[r] & H^4(\cl{X},\Z_{\ell}(2))^G \arrow[r] & 0,
		\end{tikzcd}
		\]	
		o\`u la suite exacte courte	vient de la suite de Hochschild-Serre en cohomologie \'etale. Le groupe fini $H^1(\F,H^3({\cl{X},\Z_{\ell}(2)}))$ est en g\'en\'eral non nul; voir la remarque \ref{remarquedim2} (i). Donc la surjectivit\'e de (\ref{tate-int1}) n'entra\^{\i}ne pas a priori celle de (\ref{tate-int3}). Nous esp\'erons revenir sur ce point dans une publication ult\'erieure.
	\end{rmk}

	\section{Applications ``classe de cycle'' en cohomologie $\ell$-adique}\label{cyclesTate}

	On donne ici  des rappels de r\'esultats que l'on peut trouver
	pour l'essentiel dans un article de B. Kahn et du premier auteur \cite{colliot2013cycles}.
	Comme d\'ej\`a  indiqu\'e, le th\'eor\`eme  \ref{KCTK}  et le corollaire \ref{dim3finialgclos} sont utilis\'es dans la
	d\'emonstration du th\'eor\`eme \ref{mainthm3} 
	dans la fin de la d\'emonstration du th\'eor\`eme \ref{mainthm3}.

	Soit $k$ un corps de caract\'eristique $p \geq 0$. Soit $\ell$ un nombre premier diff\'erent de $p$.
	Si $k$ est un corps fini, ou un corps
	alg\'ebriquement clos, situations auxquelles on va se restreindre dans ce paragraphe,  
	pour toute $k$-vari\'et\'e $X$
	les groupes de cohomomogie \'etale $H^{i}(X,\mu_{\ell^n}^{\otimes j})$
	sont finis et les groupes de cohomologie $\ell$-adiques
	$H^{i}(X,\Z_{\ell}(j)) =  \varprojlim_n H^{i}(X,\mu_{\ell^n}^{\otimes j})$
	sont des $\Z_{\ell}$-modules de type fini.
	
	On a des applications cycles
	$$CH^{i}(X)/\ell^n \to H^{2i}(X,\mu_{\ell^n}^{\otimes i})$$
	et  des applications induites de $\Z_{\ell}$-modules
	$$ \varprojlim_{n} CH^{i}(X)/\ell^n  \to H^{2i}(X,\Z_{\ell}(i)).$$
	
	On a les applications compos\'ees de $\Z_{\ell}$-modules
	$$ CH^{i}(X)  \otimes \Z_{\ell} \to  \varprojlim_{n} CH^{i}(X)/\ell^n  \to H^{2i}(X,\Z_{\ell}(i)).$$
	En utilisant le fait que les $H^{2i}(X,\Z_{\ell}(i))$ sont des $\Z_{\ell}$-modules de type fini,
	on voit que l'application $ \varprojlim CH^{i}(X)/\ell^n  \to H^{2i}(X,\Z_{\ell}(i))$ est surjective
	si et seulement si l'application compos\'ee ci-dessus est surjective.
	
	Pour toute  vari\'et\'e projective et lisse g\'eom\'e\-tri\-quement int\`egre $X$ sur un corps fini $\F$ et tous entiers $i\geq 0$,
	J. Tate  \cite{tateconj}  a conjectur\'e que les applications
	cycle rationnelles
	$$ CH^{i}(X)  \otimes \Q_{\ell} \to H^{2i}(X,\Q_{\ell}(i))$$ 
	sont surjectives. Pour $i=1$, cet \'enonc\'e est \'equivalent \`a la surjectivit\'e de
	la fl\`eche d\'eduite de la fl\`eche  de Kummer
	$$ \on{Pic}(X)  \otimes \Z_{\ell} \to H^{2}(X,\Z_{\ell}(1)),$$
	et ceci est \'equivalent \`a la finitude de $\on{Br}(X)\{\ell\}$. 
	
	On s'int\'eresse ici \`a la validit\'e de la conjecture suivante pour certaines classes
	de vari\'et\'es projectives et lisses sur un corps fini.
	
	\begin{conj}\label{cycle0}
		Soient $\F$ un corps fini et  $X$ une vari\'et\'e projective et lisse g\'eom\'e\-tri\-quement int\`egre
		sur $\F$ de dimension $d$.
		Alors, 
		pour tout $\ell$ premier distinct de la caract\'eristique de $\F$,
		l'application  cycle
		$$CH^{d-1}(X) \otimes \Z_{\ell} \to  H^{2d-2}(X,\Z_{\ell}(d-1))$$
		est surjective.
	\end{conj}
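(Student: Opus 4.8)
The plan is to prove Conjecture~\ref{cycle0} in two stages: first reduce to the case $d=3$ of threefolds — this is the content of Proposition~\ref{dim3suffit} — and then reduce that case to the vanishing of a third unramified cohomology group.

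For the reduction I would argue by descending induction on $d$. Let $X/\F$ be smooth, projective and geometrically integral of dimension $d\geq 4$, and choose, by a Bertini theorem over finite fields, a smooth hyperplane section $Y\subset X$ in a suitable projective embedding; since $\dim Y=d-1\geq 3$, the section $Y$ is geometrically integral. Its open complement $U:=X\setminus Y$ is affine of dimension $d$, so Artin's affine vanishing together with $\on{cd}(\F)=1$ gives $H^{j}(U,\Z_{\ell}(d-1))=0$ for $j>d+1$; as $2d-2>d+1$ when $d\geq 4$, the Gysin (localization) sequence shows that the push-forward $H^{2d-4}(Y,\Z_{\ell}(d-2))\to H^{2d-2}(X,\Z_{\ell}(d-1))$ is surjective. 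This map is compatible with proper push-forward of $1$-cycles $CH_{1}(Y)\to CH_{1}(X)$ and with the two cycle class maps, so surjectivity of the cycle map for $Y$ — an instance of Conjecture~\ref{cycle0} in dimension $d-1$ — forces surjectivity of the cycle map for $X$. Iterating down to $d=3$ finishes the reduction.

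It then remains to treat a smooth projective geometrically integral threefold $X/\F$. Here I would invoke Theorem~\ref{KCTK} (resting on the exact sequence~(\ref{ctkahn}) of Kahn and the first author): granting the Tate conjecture for divisors on surfaces over finite fields — which, via hard Lefschetz, yields the rational Tate conjecture for $1$-cycles on $X$, hence finiteness of the cokernel of $CH^{2}(X)\otimes\Z_{\ell}\to H^{4}(X,\Z_{\ell}(2))$ — that cokernel is identified with a quotient of $H^{3}_{\on{nr}}(\F(X),\Q_{\ell}/\Z_{\ell}(2))$. So Conjecture~\ref{cycle0} for threefolds would follow from an affirmative answer to Question~(\ref{nr-trivial}), namely the vanishing of $H^{3}_{\on{nr}}(\F(X),\Q_{\ell}/\Z_{\ell}(2))$ for every threefold over $\F$.

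The main obstacle is precisely this vanishing, which is open. Via~(\ref{ctkahn}) it breaks into a purely $\ell$-adic \'etale part — the triviality of the image in $H^{3}(\F(X),\Q_{\ell}/\Z_{\ell}(2))$ of $\on{Ker}[H^{3}(X,\Q_{\ell}/\Z_{\ell}(2))\to H^{3}(\overline{X},\Q_{\ell}/\Z_{\ell}(2))]$, which Theorem~\ref{noyau} establishes for products of a surface with curves — and a ``geometric'' part, the group $\on{Coker}[CH^{2}(X)\to CH^{2}(\overline{X})^{G}]\{\ell\}$, which for $X=C\times S$ amounts to a Galois-descent statement for $A_{0}(S_{\F(C)})\{\ell\}$. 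When $S$ is geometrically $CH_{0}$-trivial the latter can be controlled — unconditionally when $\ell$ does not divide $|\on{NS}(\overline{S})_{\on{tors}}|$ (Theorem~\ref{mainthm1}), or under the restrictive hypothesis~(\ref{cond2}) (Theorems~\ref{mainthm2}--\ref{mainthm4}) — and this is how the present paper obtains Conjecture~\ref{cycle0}, in the form~(\ref{tate-int3}), for such products $C\times S$. For general threefolds no approach to $H^{3}_{\on{nr}}$ is available, so the conjecture is at present out of reach.
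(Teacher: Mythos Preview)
This statement is a \emph{conjecture}; the paper does not prove it, and indeed states it as open. There is therefore no proof in the paper to compare your proposal against. What you have written is not a proof either, and you acknowledge as much in your final paragraph: the vanishing of $H^{3}_{\on{nr}}(\F(X),\Q_{\ell}/\Z_{\ell}(2))$ for an arbitrary threefold over $\F$ is the missing ingredient, and it is genuinely unknown.

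That said, your outline of the \emph{strategy} is accurate and matches the paper's discussion. The reduction from dimension $d\geq 4$ to $d=3$ is exactly Proposition~\ref{dim3suffit}, and your sketch (Bertini over finite fields, affine vanishing plus $\on{cd}(\F)=1$ to kill $H^{2d-2}(U,-)$, then the Gysin sequence and compatibility with push-forward of cycles) is the paper's argument; the paper carries this out at the level of finite coefficients $\mu_{\ell^n}$ before passing to the limit, which is a safer bookkeeping choice than working directly with $\Z_{\ell}$-coefficients, but the idea is the same. Your account of the threefold case via Theorem~\ref{KCTK} is also the paper's viewpoint.

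One small imprecision: when you write ``granting the Tate conjecture for divisors on \emph{surfaces}'' to obtain finiteness of the cokernel for a threefold $X$, what is actually needed (and what Proposition~\ref{coktatefini} uses) is the finiteness of $\on{Br}(X)\{\ell\}$, i.e.\ the Tate conjecture for divisors on the threefold $X$ itself. The Tate conjecture for surfaces enters elsewhere in the paper (via Schoen's theorem), but not at this particular step.
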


	\begin{prop}\label{coktatefini} 
		Soit $X/\F$ projective, lisse, g\'eom\'etriquement connexe de dimension $d$.
		Soit $\ell$ premier, $\ell\neq p$.
		Supposons que le groupe $\on{Br}(X)\{\ell\}$ 
		est
		fini. 
		Alors le conoyau de l'application cycle
		$$ CH^{d-1}(X) \otimes \Z_{\ell} \to H^{2d-2}(X,\Z_{\ell}(d-1))$$
		est fini.
	\end{prop}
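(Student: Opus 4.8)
The plan is to reduce the assertion to the finiteness criterion for the Brauer group by means of the Hard Lefschetz theorem. Since $H^{2d-2}(X,\Z_{\ell}(d-1))$ is a finitely generated $\Z_{\ell}$-module (as recalled at the beginning of this section), it suffices to prove that the cycle map $CH^{d-1}(X)\otimes\Q_{\ell}\to H^{2d-2}(X,\Q_{\ell}(d-1))$ is surjective: then the cokernel of $CH^{d-1}(X)\otimes\Z_{\ell}\to H^{2d-2}(X,\Z_{\ell}(d-1))$, being a finitely generated torsion $\Z_{\ell}$-module, is finite. One may assume $d\geq 2$; for $d=1$ both sides are free of rank one and the cycle map is an isomorphism.

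I would first choose an ample line bundle $L$ on $X$ (it exists since $X/\F$ is projective), set $\eta:=c_{1}(L)\in H^{2}(X,\Z_{\ell}(1))$, and denote again by $\eta$ its image in $H^{2}(\cl{X},\Z_{\ell}(1))$. By the $\ell$-adic Hard Lefschetz theorem (Deligne), cup product with $\eta^{\,d-2}$ gives an isomorphism $H^{2}(\cl{X},\Q_{\ell}(1))\xrightarrow{\ \sim\ }H^{2d-2}(\cl{X},\Q_{\ell}(d-1))$, which is $G$-equivariant because $\eta$ is defined over $\F$. On the other hand, the Hochschild--Serre spectral sequence together with Deligne's theorem on the weights of Frobenius (compare \cite[Th\'eor\`eme 2, p.780]{colliot1983torsion}) yields isomorphisms $H^{2}(X,\Q_{\ell}(1))\xrightarrow{\sim}H^{2}(\cl{X},\Q_{\ell}(1))^{G}$ and $H^{2d-2}(X,\Q_{\ell}(d-1))\xrightarrow{\sim}H^{2d-2}(\cl{X},\Q_{\ell}(d-1))^{G}$, compatibly with cup product by $\eta$: indeed on $H^{1}(\cl{X},\Q_{\ell}(1))$ and on $H^{2d-3}(\cl{X},\Q_{\ell}(d-1))$ the geometric Frobenius has eigenvalues of absolute value $q^{-1/2}$, in particular not $1$, so the groups $H^{1}(\F,-)$ of these modules vanish. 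Since a $G$-equivariant isomorphism restricts to an isomorphism on $G$-invariants, one deduces from the resulting commutative square that cup product with $\eta^{\,d-2}$ is an isomorphism $H^{2}(X,\Q_{\ell}(1))\xrightarrow{\sim}H^{2d-2}(X,\Q_{\ell}(d-1))$.

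It then remains to feed in the hypothesis. As recalled above in this section, the finiteness of $\on{Br}(X)\{\ell\}$ is equivalent to the surjectivity of the Kummer map $\on{Pic}(X)\otimes\Z_{\ell}\to H^{2}(X,\Z_{\ell}(1))$, hence of $\on{Pic}(X)\otimes\Q_{\ell}\to H^{2}(X,\Q_{\ell}(1))$. Because the cycle class map $CH^{\bullet}(X)\to H^{2\bullet}(X,\Z_{\ell}(\bullet))$ is multiplicative, for any $y$ in the image of $\on{Pic}(X)\otimes\Q_{\ell}$ the class $\eta^{\,d-2}\cup y$ is the cycle class of the product, in $CH^{d-1}(X)\otimes\Q_{\ell}$, of $c_{1}(L)^{d-2}$ with a preimage of $y$, and therefore lies in the image of $CH^{d-1}(X)\otimes\Q_{\ell}$. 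Combining this with the isomorphisms of the preceding paragraph, the image of $CH^{d-1}(X)\otimes\Q_{\ell}$ in $H^{2d-2}(X,\Q_{\ell}(d-1))$ contains $\eta^{\,d-2}\cup H^{2}(X,\Q_{\ell}(1))=H^{2d-2}(X,\Q_{\ell}(d-1))$, which completes the argument.

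The hard part is the pair of non-formal inputs: the $\ell$-adic Hard Lefschetz theorem over $\cl{\F}$, which must be applied with a Lefschetz operator coming from $\F$ so that it preserves $G$-equivariance, and the Weil-conjecture vanishing of the auxiliary $H^{1}(\F,-)$ groups, which is what lets one replace $H^{*}(X,\Q_{\ell}(*))$ by the Galois invariants of $H^{*}(\cl{X},\Q_{\ell}(*))$. Everything else — the Kummer sequence and the multiplicativity of cycle classes — is routine.
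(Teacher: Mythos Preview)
Your proof is correct and follows essentially the same strategy as the paper's: Hard Lefschetz on $\cl{X}$ combined with the Weil--Deligne weight argument to pass between $H^{*}(X,-)$ and $H^{*}(\cl{X},-)^{G}$, plus the equivalence between finiteness of $\on{Br}(X)\{\ell\}$ and surjectivity of the degree-one cycle map. The only cosmetic difference is that the paper works with $\Z_{\ell}$-coefficients throughout, phrasing each step as ``finite kernel and cokernel'', whereas you tensor with $\Q_{\ell}$ at the outset and then argue with genuine isomorphisms; these are equivalent formulations of the same argument.
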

	
	\begin{proof} 
		On peut supposer $d\geq 3$.
		Comme aucune propre valeur de Frobenius sur les groupes
		de cohomologie $H^i( \cl{X},\Q_{\ell}(j) )   $ pour $i\neq 2j$
		n'est une racine de l'unit\'e (Deligne), la suite spectrale
		de Hochschild-Serre donne que
		les applications de $\Z_{\ell}$-modules de type fini
		$$H^2(X,\Z_{\ell}(1)) \to H^2(\cl{X},\Z_{\ell}(1))^G$$
		et
		$$H^{2d-2}(X,\Z_{\ell}(d-1)) \to H^{2d-2}(\cl{X},\Z_{\ell}(d-1))^G$$
		sont surjectives \`a noyau fini  
		(cf. \cite[Th\'eor\`eme 2, p.780]{colliot1983torsion}).
		
		Si $\on{cl}(L)\in H^2(X,\Z_{\ell}(1))$ est la classe d'une section hyperplane $L$ de $X$,
		le th\'eor\`eme de Lefschetz difficile dit que 
		le cup-produit par $\on{cl}(L)^{d-2}$  d\'efinit un homomorphisme
		$G$-\'equivariant
		$H^2(\cl{X},\Z_{\ell}(1)) \to H^{2d-2}(\cl{X},\Z_{\ell}(d-1))$
		\`a noyau et conoyau fini.
		
		La combinaison de ces r\'esultats  
		donne que la fl\`eche
		$$ CH^{d-1}(X) \otimes \Z_{\ell} \to H^{2d-2}(X,\Z_{\ell}(d-1))$$
		a son conoyau fini.
	\end{proof}

	\begin{lemma}\label{cycle1} 
		Soient $\F$ un corps fini, $X \subset \P^N_{\F}$ une vari\'et\'e projective et lisse int\`egre de dimension $d$,
		$Y \subset X$ une section hyperplane, et $U \subset X$ l'ouvert compl\'ementaire.
		\begin{enumerate}[label=(\roman*)]
			\item Soit $M$ le faisceau \'etale sur $U$ associ\'e \`a un module galoisien fini sur $\F$. Alors pour tout $i \geq d+2$, on a $H^{i}(U,M)=0$.
			\item Si $d \geq 4$ et $n$ est premier \`a la caract\'eristique de $\F$, alors l'application
			$$H^{2d-4}(Y,\mu_n^{\otimes(d-1)}) \to H^{2d-2}(X,\mu_n^{\otimes d})$$
			induite par l'isomorphisme de puret\'e
			$$H^{2d-4}(Y,\mu_n^{\otimes(d-1)}) \simeq H^{2d-2}_{Y}(X,\mu_n^{\otimes d})$$
			est surjective.
		\end{enumerate}
	\end{lemma}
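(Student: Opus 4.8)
The plan is to deduce both assertions from Artin's affine vanishing theorem together with the Hochschild--Serre spectral sequence over $\F$ recalled above. For part (i), the key point is that $U$ is affine: writing $Y = X \cap H$ for a hyperplane $H \subset \P^N_{\F}$, we get $U = X \cap (\P^N_{\F} \setminus H)$, a closed subscheme of the affine space $\A^N_{\F}$, hence an affine scheme of dimension $d$. Base changing to $\cl{\F}$, Artin's theorem on the cohomological dimension of affine varieties over a separably closed field (see e.g. \cite[Chap.~VI]{milneetale}) yields $H^q(\cl{U}, \cl{M}) = 0$ for every $q > d$, where $\cl{M}$ denotes the constant sheaf on $\cl{U}$ underlying $M$. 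I would then feed this into the Hochschild--Serre spectral sequence
\[ E_2^{p,q} = H^p(\F, H^q(\cl{U}, \cl{M})) \Longrightarrow H^{p+q}(U, M). \]
Since $\on{Gal}(\cl{\F}/\F) \cong \widehat{\Z}$ has cohomological dimension $1$, we have $E_2^{p,q} = 0$ for $p \geq 2$; combined with $E_2^{p,q} = 0$ for $q \geq d+1$, every term of total degree $\geq d+2$ vanishes, whence $H^i(U, M) = 0$ for all $i \geq d+2$.

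For part (ii), I would use the localization exact sequence in \'etale cohomology with supports attached to the closed immersion $Y \hookrightarrow X$ (with $Y$ taken smooth, so that absolute purity applies) with open complement $U$:
\[ H^{2d-2}_Y(X, \mu_n^{\otimes d}) \longrightarrow H^{2d-2}(X, \mu_n^{\otimes d}) \longrightarrow H^{2d-2}(U, \mu_n^{\otimes d}). \]
By construction, the map in the statement is the composite of the purity isomorphism $H^{2d-4}(Y, \mu_n^{\otimes(d-1)}) \xrightarrow{\sim} H^{2d-2}_Y(X, \mu_n^{\otimes d})$ with the first arrow above; hence it suffices to prove that this first arrow is surjective, i.e. that the second arrow vanishes. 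Now $\mu_n^{\otimes d}$ is a finite locally constant sheaf of the type handled in part (i), and for $d \geq 4$ one has $2d - 2 \geq d+2$; so $H^{2d-2}(U, \mu_n^{\otimes d}) = 0$ by part (i), which forces the Gysin map to be onto.

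The argument is essentially formal once (i) is in place, and I do not expect a genuine obstacle: the only inputs requiring a word are the affineness of $U$ (immediate from the above) and the applicability of the purity isomorphism, which presupposes $Y$ smooth --- one works with a smooth hyperplane section, which exists for a suitable projective embedding by Poonen's Bertini theorem over finite fields.
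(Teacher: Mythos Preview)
Your proof is correct and follows essentially the same route as the paper: affine vanishing for $\cl{U}$ combined with the Hochschild--Serre spectral sequence (using $\on{cd}(\F)=1$) for (i), and the Gysin/localization sequence together with the inequality $2d-2\geq d+2$ for (ii). Your added remarks on the affineness of $U$ and on the smoothness of $Y$ needed for purity are welcome clarifications, and indeed the paper invokes Poonen's Bertini precisely at the point where the lemma is applied.
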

	
	\begin{proof}
		Comme $\on{cd}(\F)=1$, la suite spectrale
		$$H^{p}(\F, H^{q}(\overline{U},M)) \Longrightarrow H^{p+q}(U,M)$$
		donne des suites exactes courtes \[0\to H^1(\F,H^{i-1}(\cl{U},M))\to H^i(U,M)\to H^i(\cl{U},M)^G\to 0,\] o\`u $G$ est le groupe de Galois absolu de $\F$. Comme $\cl{U}$ est affine et $M$ est fini, par le th\'eor\`eme de Lefschetz affine on a $H^i(\cl{U},M)=0$ pour $i\geq d+1$. Ceci d\'emontre~(i). La partie (ii) suit de (i)	pour $i=d-1$ et $M=\mu_n^{\otimes(d-1)}$, en utilisant la suite de Gysin.
	\end{proof}

	\begin{prop}\label{dim3suffit}
		Pour \'etablir la conjecture \ref{cycle0} pour toute vari\'et\'e de dimension  $d\geq 3$, il suffit de le faire pour toute vari\'et\'e de dimension  $d=3$.
	\end{prop}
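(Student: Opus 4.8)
La stratégie consiste à raisonner par récurrence sur $d$, le cas $d=3$ étant l'hypothèse, et à ramener le cas de dimension $d\geq 4$ à celui de dimension $d-1$. On fixe donc une variété $X\subset \P^N_{\F}$ projective, lisse et géométriquement intègre de dimension $d\geq 4$, et l'on suppose la conjecture~\ref{cycle0} acquise en toute dimension comprise entre $3$ et $d-1$. La première chose à faire est de se donner une section hyperplane lisse sur $\F$ : d'après le théorème de Bertini sur les corps finis (Poonen), il existe, quitte à composer le plongement avec un plongement de Veronese, une section hyperplane $Y\subset X$ lisse et définie sur $\F$ ; comme $\dim Y=d-1\geq 1$, la variété $Y$ est géométriquement connexe (théorème de Lefschetz), donc géométriquement intègre, et son ouvert complémentaire $U:=X\setminus Y$ est une $\F$-variété affine lisse de dimension $d$.

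La deuxième étape est la surjectivité de la flèche de Gysin associée à l'immersion fermée $i\colon Y\hookrightarrow X$. Fixons $n\geq 1$. La suite exacte de localisation pour le couple $(X,U)$, jointe à l'isomorphisme de pureté $H^{2d-2}_Y(X,\mu_{\ell^n}^{\otimes(d-1)})\simeq H^{2d-4}(Y,\mu_{\ell^n}^{\otimes(d-2)})$ (licite car $Y$ est lisse et de codimension $1$ dans $X$ lisse), fournit une suite exacte
\[
H^{2d-4}(Y,\mu_{\ell^n}^{\otimes(d-2)}) \xrightarrow{i_*} H^{2d-2}(X,\mu_{\ell^n}^{\otimes(d-1)}) \to H^{2d-2}(U,\mu_{\ell^n}^{\otimes(d-1)}).
\]
Comme $d\geq 4$, on a $2d-2\geq d+2$, de sorte que le lemme~\ref{cycle1}(i) donne $H^{2d-2}(U,\mu_{\ell^n}^{\otimes(d-1)})=0$ ; la flèche $i_*$ est donc surjective pour tout $n$. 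Les groupes en jeu étant finis, le passage à la limite projective sur $n$ entraîne la surjectivité de la flèche de Gysin $\ell$-adique $i_*\colon H^{2d-4}(Y,\Z_{\ell}(d-2))\to H^{2d-2}(X,\Z_{\ell}(d-1))$.

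On conclut en combinant cela avec l'hypothèse de récurrence grâce à la fonctorialité de l'application cycle pour les images directes propres, qui fournit le carré commutatif
\[
\begin{tikzcd}
CH^{d-2}(Y)\otimes\Z_{\ell} \arrow[r,"i_*"] \arrow[d] & CH^{d-1}(X)\otimes\Z_{\ell} \arrow[d] \\
H^{2d-4}(Y,\Z_{\ell}(d-2)) \arrow[r,"i_*"] & H^{2d-2}(X,\Z_{\ell}(d-1)),
\end{tikzcd}
\]
où la flèche horizontale supérieure est l'image directe des $1$-cycles (une courbe tracée sur $Y$ en est une sur $X$) et les flèches verticales sont les applications cycle $\ell$-adiques. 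Par hypothèse de récurrence la flèche verticale de gauche est surjective, et l'étape précédente montre que la flèche horizontale inférieure l'est aussi ; la composée $CH^{d-2}(Y)\otimes\Z_{\ell}\to H^{2d-2}(X,\Z_{\ell}(d-1))$ est donc surjective, et par suite l'application cycle $CH^{d-1}(X)\otimes\Z_{\ell}\to H^{2d-2}(X,\Z_{\ell}(d-1))$ l'est également. Ceci établit la conjecture~\ref{cycle0} en dimension $d$ et achève la récurrence.

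Le point que je m'attends à voir réclamer le plus de soin est l'existence de la section lisse $Y$ sur le corps fini $\F$ lui-même : on peut l'obtenir par le théorème de Bertini de Poonen, ou bien passer à une extension finie $\F'/\F$ de degré premier à $\ell$ sur laquelle une telle section existe — ce qui est immédiat par comptage de points lorsque $|\F'|$ est grand —, démontrer la surjectivité voulue sur $\F'$, puis redescendre sur $\F$ par restriction–corestriction, opération inoffensive après tensorisation par $\Z_{\ell}$. Tout le reste n'est que la comptabilité habituelle de Lefschetz et de Gysin ; on notera que l'argument dégénère précisément en dimension $3$, où l'on aurait $2d-2=4<d+2=5$ et où l'annulation de $H^{2d-2}(U,-)$, donc la surjectivité de la flèche de Gysin, tombe en défaut — ce qui explique que la récurrence ne puisse être poussée en deçà de la dimension trois.
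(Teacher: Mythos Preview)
Your argument is correct and follows essentially the same route as the paper's own proof: induction on $d$, Poonen's Bertini to produce a smooth geometrically integral hyperplane section $Y$, surjectivity of the Gysin map via the vanishing $H^{2d-2}(U,\mu_{\ell^n}^{\otimes(d-1)})=0$ for $d\geq 4$ (your appeal to lemme~\ref{cycle1}(i) is exactly how part~(ii) of that lemma is proved), and the commutative square comparing cycle maps for $Y$ and $X$. Your additional remarks on the Veronese reembedding, the restriction--corestriction alternative, and the breakdown of the vanishing in dimension~$3$ are all correct and consonant with the paper's discussion.
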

	\begin{proof}
		On proc\`ede par r\'ecurrence sur $d\geq 3$. Soit $X/\F$ une vari\'et\'e projective, lisse et g\'eom\'etriquement int\`egre de dimension $d\geq 4$. Par le th\'eor\`eme de Bertini sur les corps finis \cite{poonen2004bertini}, il existe une immersion ferm\'ee $X \hookrightarrow \P^N$ et une section hyperplane $Y$ de $X$ qui est lisse et g\'eom\'etriquement int\`egre de dimension $d-1$. Pour tout $n\geq 1$, on a un carr\'e commutatif
		\[
		\begin{tikzcd}
			CH^{d-2}(Y)/\ell^n \arrow[r] \arrow[d] & H^{2d-4}(Y,\mu_{\ell^n}^{\otimes (d-2)}) \arrow[d] \\
			CH^{d-1}(X)/\ell^n \arrow[r] & H^{2d-2}(X,\mu_{\ell^n}^{\otimes(d-1)}).
		\end{tikzcd}		
		\]
		On passe \`a la limite projective sur $n$. On obtient le diagramme commutatif suivant:
		\[
		\begin{tikzcd}
			\varprojlim_{n} CH^{d-2}(Y)/\ell^n \arrow[r] \arrow[d] & H^{2d-4}(Y,\Z_{\ell}(d-2)) \arrow[d] \\
			\varprojlim_{n} CH^{d-1}(X)/\ell^n \arrow[r] & H^{2d-2}(X,\Z_{\ell}(d-1)).
		\end{tikzcd}		
		\]
		Par l'hypoth\`ese de r\'ecurrence, la fl\`eche horizontale en haut est surjective. D'apr\`es le lemme \ref{cycle1}(ii), la fl\`eche verticale \`a droite est surjective. On conclut que la fl\`eche horizontale en bas est surjective. 
	\end{proof}
	
	\begin{rmk} Pour \'etablir la conjecture 
		\ref{cycle0} pour une vari\'et\'e $X$ avec un plongement $X \subset \P^n_{\F}$ donn\'e, par un argument de normes, il suffit de l'\'etablir sur des extensions finies $\F'/\F$ de degr\'es premiers
		entre eux. On peut donc se contenter d'utiliser le th\'eor\`eme de Bertini sur les corps finis ``suffisamment'' gros,
		et dans l'argument ci-dessus de  prendre les sections hyperplanes pour le plongement donn\'e. Ainsi, pour \'etablir la conjecture \ref{cycle0} pour
		les hypersurfaces cubiques lisses dans $\P^n_{\F}$ pour $n \geq 4$, il suffit de l'\'etablir  pour les hypersurfaces cubiques lisses
		dans $\P^4_{\F'}$ pour tout corps fini $\F'$. En caract\'eristique diff\'erente de $2$, ceci est connu  (voir \cite[Thm. 5.1]{cttunis}).
	\end{rmk}
	
	\begin{rmk}
		Supposons vraie la conjecture de Tate pour les surfaces sur un corps fini. Pour d\'emontrer la conjecture \ref{cycle0}, il suffirait de montrer :
		pour $X$ de dimension 3  et  toute classe $\xi$ dans $H^4(X,\Z_{\ell}(2))$ il existe une section hyperplane lisse
		$Y \subset X$ telle que $\xi$ soit support\'ee sur $Y$,  c'est-\`a-dire  telle que la restriction de $\xi$
		dans $H^4(U,\Z_{\ell}(2))=H^1(\F, H^3({\overline U},\Z_{\ell}(2)))$ soit nulle.
	\end{rmk}

	Soit $G$ un groupe profini. Pour tout $\Z_{\ell}$-module   $M$ \'equip\'e d'une action continue
	de $G$, on note $M^{(1)} \subset M$ le sous-module form\'e des \'el\'ements dont le stabilisateur
	est un sous-groupe ouvert de $G$.  
	
	\begin{lemma}\label{CTK41} \cite[Lemme 4.1]{colliot2013cycles}
		Si $G$ est un groupe profini et $M$ est un $\Z_{\ell}$-module $M$  de type fini muni d'une action continue de $G$, alors
		le quotient  $M/M^{(1)}$ est sans torsion.
	\end{lemma}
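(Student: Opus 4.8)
The plan is to prove directly the key implication: if $m\in M$ satisfies $\ell m\in M^{(1)}$, then $m\in M^{(1)}$. This suffices, because $M^{(1)}$ is a $\Z_\ell$-submodule (if $\on{Stab}_G(m)$ is open it is contained in $\on{Stab}_G(\lambda m)$ for every $\lambda\in\Z_\ell$), so $M/M^{(1)}$ is a $\Z_\ell$-module on which, by the implication, multiplication by $\ell$ is injective; and a $\Z_\ell$-module with injective multiplication by $\ell$ is torsion-free, since a torsion element is annihilated by some power of $\ell$ and one strips off the powers of $\ell$ one at a time.

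First I would pick an open subgroup $H\subseteq G$ contained in the stabilizer of $\ell m$. Then for every $h\in H$ one has $\ell(hm-m)=h(\ell m)-\ell m=0$, so $hm-m\in M[\ell]$. Since $M$ is a finitely generated $\Z_\ell$-module, $M[\ell]$ is finite; hence the $H$-orbit of $m$ is contained in the finite set $m+M[\ell]$ and is therefore finite. Consequently $\on{Stab}_H(m)$ has finite index in $H$, so $\on{Stab}_G(m)$ has finite index in $G$.

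It remains to see that $\on{Stab}_G(m)$ is open. Since the action $G\times M\to M$ is continuous and $M$, equipped with its $\ell$-adic topology, is Hausdorff, the orbit map $g\mapsto gm$ is continuous and $\{m\}$ is closed, so $\on{Stab}_G(m)$ is a closed subgroup of $G$. A closed subgroup of finite index in a profinite group is open (its complement is a finite union of closed cosets), so $\on{Stab}_G(m)$ is open, i.e. $m\in M^{(1)}$.

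I do not expect a genuine obstacle: the two places where the hypotheses are really used are the finiteness of $M[\ell]$ (finite generation over $\Z_\ell$) and the closedness of point-stabilizers (continuity of the action together with the Hausdorff $\ell$-adic topology on $M$); after these, the standard fact about closed finite-index subgroups of profinite groups finishes the proof.
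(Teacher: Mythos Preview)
Your argument is correct. The paper itself does not supply a proof of this lemma: it is stated with a bare reference to \cite[Lemme 4.1]{colliot2013cycles}, so there is nothing in the present text to compare against. Your proof is the standard one and is essentially the argument one finds in the cited source: reduce to showing that $\ell m\in M^{(1)}$ forces $m\in M^{(1)}$, observe that the $H$-orbit of $m$ lies in the finite set $m+M[\ell]$, and conclude by the closed-of-finite-index-is-open fact for profinite groups. The only point worth emphasising is the one you already flag, namely that the $\ell$-adic topology on a finitely generated $\Z_\ell$-module is Hausdorff, so stabilisers are closed; without this the finite-index conclusion would not by itself give openness.
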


	Soient $\F$ un corps fini, $\cl{\F}$ une cl\^{o}ture alg\'ebrique de $\F$
	et $G:=\on{Gal}(\cl{\F}/\F)$. Soit $X/\F$ une vari\'et\'e projective, lisse, g\'eom\'etriquement connexe
	de dimension $d$. Soit $\cl{X}=X\times_{\F}\cl{\F}$.
	Pour tout entier $i \geq 0$, l'application cycle
	\[CH^{i}(\cl{X}) \otimes \Z_{\ell} \to H^{2i}(\cl{X},\Z_{\ell}(i))\]
	a son image dans le sous-groupe $H^{2i}(\cl{X},\Z_{\ell}(i))^{(1)}$.
	
	\begin{thm}\label{schoen}(Schoen)
		Supposons vraie la conjecture de Tate pour
		les surfaces sur les corps finis. 
		Alors pour toute vari\'et\'e $X/\F$ projective, lisse, g\'eom\'etriquement con\-ne\-xe,
		de dimension $d$, l'image de l'application cycle
		\[CH^{d-1}(\cl{X}) \otimes \Z_{\ell} \to H^{2d-2}(\cl{X},\Z_{\ell}(i))\]
		est   le sous-groupe de $H^{2d-2}(\cl{X},\Z_{\ell}(i))^{(1)}$  form\'e des \'el\'ements dont le stabilisateur
		est un sous-groupe ouvert de G.  
	\end{thm}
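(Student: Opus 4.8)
The plan is to reproduce Schoen's argument \cite{schoen}, proceeding by induction on $d$. The inclusion of the image of the cycle map into $H^{2d-2}(\cl{X},\Z_\ell(d-1))^{(1)}$ has already been recalled just above (a cycle on $\cl X$ is defined over a finite extension of $\F$, hence its class has open stabilizer), so only the reverse inclusion is at issue. Note also that $H^{2d-2}(\cl{X},\Z_\ell(d-1))^{(1)}$, the image of $CH^{d-1}(\cl X)\otimes\Z_\ell$, and $\cl X$ itself do not change if one replaces $\F$ by a finite extension, so such extensions are harmless and I will make them freely.

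The base case $d=2$ does not use hyperplane sections and isolates the mechanism controlling torsion. Here the cycle map is the map $\on{Pic}(\cl S)\otimes\Z_\ell\to H^2(\cl S,\Z_\ell(1))$ coming from the Kummer sequence. Since $\on{Pic}^0(\cl S)$ is divisible, $\on{Pic}(\cl S)\otimes\Z_\ell=\on{NS}(\cl S)\otimes\Z_\ell$, and passing to the inverse limit in $0\to\on{NS}(\cl S)/\ell^n\to H^2(\cl S,\mu_{\ell^n})\to\on{Br}(\cl S)[\ell^n]\to0$ gives an exact sequence $0\to\on{NS}(\cl S)\otimes\Z_\ell\to H^2(\cl S,\Z_\ell(1))\to T_\ell\on{Br}(\cl S)\to0$ in which $T_\ell\on{Br}(\cl S)$ is torsion-free, because $\on{Br}(\cl S)\set{\ell}$ is the direct sum of a finite group and a divisible group \cite[(8.9)]{grothendieck1968brauer3}. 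Thus the image $\on{NS}(\cl S)\otimes\Z_\ell$ of the cycle map is a \emph{saturated} subgroup of $H^2(\cl S,\Z_\ell(1))$. On the other hand, the Tate conjecture applied to $S$ and to all of its base changes to finite subfields of $\cl\F$ yields $\on{NS}(\cl S)\otimes\Q_\ell=H^2(\cl S,\Q_\ell(1))^{(1)}$, and $H^2(\cl S,\Z_\ell(1))^{(1)}$ is itself saturated by \Cref{CTK41}. Two saturated subgroups of a finitely generated $\Z_\ell$-module with the same $\Q_\ell$-span coincide, so the image of the cycle map is exactly $H^2(\cl S,\Z_\ell(1))^{(1)}$.

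For $d\geq3$ I would pass to a smooth geometrically connected hyperplane section $\iota\colon Y\hookrightarrow X$ with $\dim Y=d-1$, which exists by the Bertini theorem over finite fields \cite{poonen2004bertini} (after a finite extension of $\F$ if necessary). Since $\cl X\setminus\cl Y$ is affine of dimension $d$ and $2d-2>d$, affine Lefschetz as in the proof of \Cref{cycle1} shows that the Gysin map $\iota_*\colon H^{2d-4}(\cl Y,\Z_\ell(d-2))\to H^{2d-2}(\cl X,\Z_\ell(d-1))$ is surjective; it is $G$-equivariant and compatible with the cycle maps via $\iota_*\colon CH^{d-2}(\cl Y)\to CH^{d-1}(\cl X)$. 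Combining hard Lefschetz on $\cl X$ and $\cl Y$ with the projection formula produces a $G$-equivariant section of $\iota_*$ after $\otimes\Q_\ell$, and, using that $M^{(1)}$ consists of the elements of $M$ fixed by some power of Frobenius, one deduces that $\iota_*\bigl(H^{2d-4}(\cl Y,\Z_\ell(d-2))^{(1)}\bigr)$ has finite index in $H^{2d-2}(\cl X,\Z_\ell(d-1))^{(1)}$. Feeding in the inductive hypothesis for $Y$ and the first inclusion, this shows that the image of the cycle map on $\cl X$ equals $H^{2d-2}(\cl X,\Z_\ell(d-1))^{(1)}$ up to a finite subgroup.

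The main obstacle is to remove this finite ambiguity: one must prove that the $\ell$-primary torsion of $H^{2d-2}(\cl X,\Z_\ell(d-1))$ is spanned by classes of curves and, more precisely, that the saturated subgroup $\iota_*\bigl(H^{2d-4}(\cl Y,\Z_\ell(d-2))^{(1)}\bigr)$ exhausts $H^{2d-2}(\cl X,\Z_\ell(d-1))^{(1)}$ on the nose. This is the technical core of Schoen's paper; it rests on the fine, $\Z_\ell$-integral structure of a Lefschetz pencil on $X$, in particular on the interplay between the vanishing cohomology and the primitive decomposition. For this step I would simply refer to \cite{schoen}.
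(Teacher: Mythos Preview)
The paper does not give an independent proof of this theorem: its entire argument is the one-line reference ``Voir \cite[Theorem 0.5]{schoen1998integral} et \cite{colliot2010autour}.'' Your sketch of Schoen's inductive strategy---the $d=2$ base case via saturation and the Tate conjecture, then hyperplane sections and Gysin for the inductive step, with the integral refinement via Lefschetz pencils explicitly deferred to Schoen---is therefore entirely consistent with the paper's treatment, and in fact more informative than what the paper itself provides.
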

	\begin{proof} 
		Voir \cite[Theorem 0.5]{schoen1998integral} et \cite{colliot2010autour}.
	\end{proof}

	\begin{cor}\label{corschoen}\
		Soient $k$ une cl\^oture alg\'e\-brique
		d'un corps fini de caract\'eristique $p$ et $X$ une $k$-vari\'et\'e projective et lisse, g\'eom\'e\-tri\-quement
		connexe, de dimension $d$. 
		Soit $\ell$ un premier distinct de $p$.
		Si  la conjecture de Tate vaut pour toutes 
		les surfaces sur les corps finis, alors :
		
		(i) 
		Le conoyau de l'application cycle
		\[CH^{d-1}(X) \otimes \Z_{\ell} \to H^{2d-2}(X,\Z_{\ell}(d-1))\]
		est un $\Z_{\ell}$-module de type fini sans torsion.
		
		(ii) Si 	$\on{Br}(X)\{\ell\}$ est fini, l'application cycle
		\[CH^{d-1}(X) \otimes \Z_{\ell} \to H^{2d-2}(X,\Z_{\ell}(d-1))\]
		est surjective.
	\end{cor}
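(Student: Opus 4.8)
Le plan est de d\'eduire les deux assertions du Th\'eor\`eme \ref{schoen} de Schoen et du Lemme \ref{CTK41}, compl\'et\'es pour (ii) par le th\'eor\`eme de Lefschetz difficile. On se ram\`ene d'abord \`a une vari\'et\'e sur un corps fini : comme $X$ est projective et lisse sur $k$, il existe un corps fini $\F_{0}$ de cl\^oture alg\'ebrique $k$ et une $\F_{0}$-vari\'et\'e $X_{0}$ projective, lisse et g\'eom\'etriquement connexe, munie d'une classe ample d\'efinie sur $\F_{0}$, avec $X=X_{0}\times_{\F_{0}}k$. Posons $G=\on{Gal}(k/\F_{0})$ et $M:=H^{2d-2}(X,\Z_{\ell}(d-1))$ ; c'est un $\Z_{\ell}$-module de type fini muni d'une action continue de $G$. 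Sous la conjecture de Tate pour les surfaces sur les corps finis, le Th\'eor\`eme \ref{schoen}, appliqu\'e \`a $X_{0}$, identifie l'image de l'application cycle $CH^{d-1}(X)\otimes\Z_{\ell}\to M$ au sous-module $M^{(1)}$. Le conoyau est donc $M/M^{(1)}$, qui est de type fini sur $\Z_{\ell}$ et, d'apr\`es le Lemme \ref{CTK41}, sans torsion ; ceci \'etablit (i). On notera que le conoyau est bien $M/M^{(1)}$, car Schoen d\'ecrit l'image exactement, pas seulement \`a un groupe fini pr\`es.

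Pour (ii), on commence par observer que la finitude de $\on{Br}(X)\{\ell\}$ entra\^{\i}ne $b_{2}=\rho$. D'apr\`es \cite[(8.9)]{grothendieck1968brauer3} (voir aussi \cite[Prop. 5.2.9]{ctskobrauer}), le sous-groupe divisible maximal de $\on{Br}(X)\{\ell\}$ est isomorphe \`a $(\Q_{\ell}/\Z_{\ell})^{b_{2}-\rho}$ ; si $\on{Br}(X)\{\ell\}$ est fini, ce sous-groupe est nul, d'o\`u $b_{2}=\rho$. Comme l'application cycle $\on{NS}(X)\otimes\Q_{\ell}\to H^{2}(X,\Q_{\ell}(1))$ est injective, l'\'egalit\'e des dimensions $b_{2}=\rho$ force alors $\on{NS}(X)\otimes\Q_{\ell}=H^{2}(X,\Q_{\ell}(1))$ : toute classe de $H^{2}(X,\Q_{\ell}(1))$ est combinaison $\Q_{\ell}$-lin\'eaire de classes de diviseurs.

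On invoque ensuite le th\'eor\`eme de Lefschetz difficile. Soit $L\in H^{2}(X,\Z_{\ell}(1))$ la classe d'une section hyperplane de $X_{0}$ ; c'est une classe $G$-invariante, et le cup-produit par $L^{d-2}$ est un isomorphisme $G$-\'equivariant $H^{2}(X,\Q_{\ell}(1))\xrightarrow{\sim}H^{2d-2}(X,\Q_{\ell}(d-1))$. Il envoie la classe d'un diviseur $D$ sur $X$ sur la classe de $D\cdot L^{d-2}\in CH^{d-1}(X)$. Combin\'e \`a ce qui pr\'ec\`ede, il en r\'esulte que toute classe de $H^{2d-2}(X,\Q_{\ell}(d-1))$ est combinaison $\Q_{\ell}$-lin\'eaire de classes de cycles ; autrement dit, le conoyau $M/M^{(1)}$ de $CH^{d-1}(X)\otimes\Z_{\ell}\to M$ est un $\Z_{\ell}$-module de torsion. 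D'apr\`es (i) il est aussi sans torsion, donc nul, ce qui est (ii).

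L'ingr\'edient de fond est le Th\'eor\`eme \ref{schoen}, que l'on admet ; les \'etapes ci-dessus sont alors formelles. Les seuls points demandant un peu de soin sont la r\'eduction \`a un mod\`ele $X_{0}$ sur un corps fini portant une polarisation d\'efinie sur le corps de base, et la v\'erification que l'isomorphisme de Lefschetz difficile est $G$-\'equivariant, ce qui r\'esulte de ce que la classe $L$ et le cup-produit le sont.
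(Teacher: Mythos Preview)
Your proof is correct and follows the paper's approach closely: part (i) is exactly the combination of Schoen's theorem with the lemma that $M/M^{(1)}$ is torsion-free, and part (ii) uses hard Lefschetz to force the cokernel to be torsion, hence zero by (i). The one minor variation is in how you extract information from the finiteness of $\on{Br}(X)\{\ell\}$: the paper invokes \cite[(8.7)]{grothendieck1968brauer3} to conclude directly that the \emph{integral} cycle map $CH^{1}(X)\otimes\Z_{\ell}\to H^{2}(X,\Z_{\ell}(1))$ is surjective (since $T_{\ell}(\on{Br}(X))=0$), then runs hard Lefschetz through a commutative square, whereas you pass via $b_{2}=\rho$ to rational surjectivity on divisors. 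Both routes yield the same conclusion that the cokernel in codimension $d-1$ is torsion; the paper's version is marginally cleaner in that it never needs to mention $b_{2}=\rho$ or injectivity of the rational cycle map, but the content is the same. Your remarks on descending to a model $X_{0}/\F_{0}$ and on the $G$-equivariance of the Lefschetz isomorphism are correct, though for (ii) the $G$-equivariance is not actually used.
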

	
	\begin{proof} 
		Comme  expliqu\'e dans la d\'emonstration de \cite[Prop. 4.2]{colliot2013cycles}, la combinaison
		du th\'eor\`eme \ref{schoen} et du lemme \ref{CTK41} donne  l'\'enonc\'e (i).
		
		Montrons (ii). On proc\`ede comme \`a la proposition \ref{coktatefini}.
		Sous l'hypoth\`ese de finitude du groupe de Brauer,
		le module de Tate $T_{\ell}(\on{Br}(X))$ est  nul, et
		l'application cycle
		\[CH^1(X) \otimes \Z_{\ell} \xrightarrow{\on{cl}} H^2(X,\Z_{\ell}(1))\]
		est surjective; voir \cite[(8.7)]{grothendieck1968brauer3}. Soit $L\in CH^1(X)$ la classe d'un diviseur ample. On a donc un diagramme commutatif
		\[
		\begin{tikzcd}
			CH^1(X)\otimes \Z_{\ell} \arrow[r,->>,"\on{cl}"] \arrow[d, "\cap L^{d-2}"] & H^2(X,\Z_{\ell}(1)) \arrow[d, "\cap \on{cl}(L)^{d-2}"]  \\
			CH^{d-1}(X)\otimes \Z_{\ell} \arrow[r,"\on{cl}"] & H^{2d-2}(X,\Z_{\ell}(d-1)).
		\end{tikzcd}
		\]
		D'apr\`es le th\'eor\`eme de Lefschetz difficile, l'homomorphisme vertical de droite devient un isomorphisme apr\`es tensorisation par $\Q_{\ell}$. Il s'ensuit que la fl\`eche
		\[CH^{d-1}(X)\otimes  \Z_{\ell} \to H^{2d-2}(X, \Z_{\ell}(d-1))\]
		a un conoyau fini.	L'\'enonc\'e (i) assure alors la nullit\'e de ce conoyau.	
	\end{proof}
	
	\begin{thm}\label{KCTK} \cite[Thm. 1.1]{kahn2012classes}, \cite [Thm. 2.2]{colliot2013cycles} 
		Soit $k$ un corps fini ou un corps alg\'ebriquement clos.
		Soit $X$ une $k$-vari\'et\'e projective, lisse, g\'eom\'etriquement con\-ne\-xe.
		Soit $k(X)$ son corps des fonctions rationnelles.
		Soit  $\ell$ premier distinct de la caract\'eristique de $k$. 
		Les deux groupes suivants  sont finis et sont isomorphes 
		entre eux :
		
		(i)  Le quotient du groupe $H^3_{\on{nr}}(k(X),\Q_{\ell}/\Z_{\ell}(2))$
		par son sous-groupe divisible maximal.
		
		(ii) Le sous-groupe de torsion du $\Z_{\ell}$-module de type fini
		conoyau de l'application cycle 
		\[ CH^2(X) \otimes \Z_{\ell} \to H^{4}(X,\Z_{\ell}(2)).\]  
	\end{thm}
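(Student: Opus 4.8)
The plan is to realize both groups as pieces of a single long exact sequence produced, at each finite level $\ell^n$, from the Gersten resolution of $\mu_{\ell^n}^{\otimes 2}$ in étale cohomology. The two engines are the Bloch--Ogus theorem (which, for $\varepsilon\colon X_{\text{\'et}}\to X_{\on{Zar}}$, gives $R^q\varepsilon_*\mu_{\ell^n}^{\otimes 2}=\mc H^q(\mu_{\ell^n}^{\otimes 2})$ and the vanishing $H^p_{\on{Zar}}(X,\mc H^q(\mu_{\ell^n}^{\otimes 2}))=0$ for $p>q$) and the Merkurjev--Suslin theorem \cite{merkurjev1982k-cohomology}, which gives both $\mc K_2/\ell^n\xrightarrow{\sim}\mc H^2(\mu_{\ell^n}^{\otimes 2})$ and the weight-$2$ Beilinson--Lichtenbaum identification $\tau_{\le 2}R\varepsilon_*\mu_{\ell^n}^{\otimes 2}\simeq\Z/\ell^n(2)$ with the Zariski-sheafified weight-$2$ motivic complex mod $\ell^n$. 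Using the truncation triangle for $R\varepsilon_*\mu_{\ell^n}^{\otimes 2}$ together with Bloch's formula $CH^2(X)=H^2_{\on{Zar}}(X,\mc K_2)$, one extracts for each $n$ an exact sequence
\[0\to H^1_{\on{Zar}}(X,\mc K_2/\ell^n)\to H^3(X,\mu_{\ell^n}^{\otimes 2})\to H^3_{\on{nr}}(k(X)/k,\mu_{\ell^n}^{\otimes 2})\to H^4_{\mathcal M}(X,\Z/\ell^n(2))\to H^4(X,\mu_{\ell^n}^{\otimes 2}),\]
where $H^4_{\mathcal M}(X,\Z/\ell^n(2))$ receives $CH^2(X)/\ell^n$ and the last arrow factors the cycle class map modulo $\ell^n$. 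This is the mechanism already used, in low degrees, in \S\ref{rappels}, and is the device by which the exact sequence (\ref{ctkahn}) is obtained in \cite{colliot2013cycles}.

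Taking the direct limit over $n$, which is exact, gives
\[0\to \varinjlim_n H^1_{\on{Zar}}(X,\mc K_2/\ell^n)\to H^3(X,\Q_\ell/\Z_\ell(2))\to H^3_{\on{nr}}(k(X)/k,\Q_\ell/\Z_\ell(2))\xrightarrow{\ \delta\ } H^4_{\mathcal M}(X,\Q_\ell/\Z_\ell(2))\to H^4(X,\Q_\ell/\Z_\ell(2)),\]
so that $\on{Coker}[H^3(X,\Q_\ell/\Z_\ell(2))\to H^3_{\on{nr}}(k(X)/k,\Q_\ell/\Z_\ell(2))]$ is identified, via $\delta$, with $\on{Ker}[H^4_{\mathcal M}(X,\Q_\ell/\Z_\ell(2))\to H^4(X,\Q_\ell/\Z_\ell(2))]$. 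Here the hypothesis that $k$ is finite or algebraically closed enters through Deligne's theorem on the weights of Frobenius: every étale cohomology group of $X$ with finite coefficients is finite and $H^4(X,\Z_\ell(2))$ is a finitely generated $\Z_\ell$-module (so the group (ii) is automatically finite, being the torsion subgroup of such a module); moreover $H^3(X,\Q_\ell/\Z_\ell(2))$ is then finite, whence $H^3(X,\Q_\ell(2))=0$, a fact used in the next step.

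The identification of (ii) with (i) is then a Bockstein argument run through the $\ell$-adic and $\Q_\ell/\Z_\ell$ versions of the above sequence. Since $H^3(X,\Q_\ell(2))=0$, the coboundary for $0\to\Z_\ell(2)\to\Q_\ell(2)\to\Q_\ell/\Z_\ell(2)\to 0$ is an isomorphism $H^3(X,\Q_\ell/\Z_\ell(2))\xrightarrow{\sim}H^4(X,\Z_\ell(2))_{\on{tors}}$, and the analogous statement holds on the motivic side, compatibly with the cycle maps. Taking the inverse limit over $n$ of the finite-level sequence (legitimate since all its terms are finite, so $\varprojlim^1$ vanishes), using that $\on{Coker}[CH^2(X)\otimes\Z_\ell\to H^4(X,\Z_\ell(2))]=\on{Coker}[\varprojlim_n CH^2(X)/\ell^n\to H^4(X,\Z_\ell(2))]$ as recalled in \S\ref{cyclesTate}, and chasing the commutative ladder relating the $\Z_\ell$-coefficient picture, the $\Q_\ell/\Z_\ell$-coefficient sequence above, and the Bockstein maps, one obtains a canonical isomorphism $\on{Coker}[CH^2(X)\otimes\Z_\ell\to H^4(X,\Z_\ell(2))]_{\on{tors}}\simeq H^3_{\on{nr}}(k(X)/k,\Q_\ell/\Z_\ell(2))/(\text{maximal divisible subgroup})$; in particular (i) is finite because (ii) is. When $k=\cl{\F}$ one writes $X=X_0\times_{\F_q}\cl{\F}$, applies the finite-field case over each $\F_{q^m}$, and passes to the colimit, the transition maps being compatible with all of the above.

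The step I expect to be the main obstacle is precisely this two-limit bookkeeping: one must check that the finite-level exact sequence is compatible with both $\varinjlim_n$ (toward $\Q_\ell/\Z_\ell$-coefficients) and $\varprojlim_n$ (toward $\Z_\ell$-coefficients) and with all the Bockstein maps, and one must isolate the genuinely torsion-free contributions — roughly, the ``unramified $H^4$'' term $H^0_{\on{Zar}}(X,\mc H^4(\Z_\ell(2)))$, which embeds into the torsion-free group $H^4(k(X),\Z_\ell(2))$, and the torsion-free quotient of $\varprojlim_n H^1_{\on{Zar}}(X,\mc H^3(\mu_{\ell^n}^{\otimes 2}))$ — from the part that actually carries the finite invariant. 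This is the point where, beyond Bloch--Ogus and Deligne's Weil~II, the weight-$2$ Beilinson--Lichtenbaum statement is indispensable: it is what licenses replacing ad hoc coniveau bookkeeping by the clean truncation triangle for $R\varepsilon_*\mu_{\ell^n}^{\otimes 2}$, hence by the motivic groups $H^{\bullet}_{\mathcal M}(X,\Z/\ell^n(2))$ and Bloch's formula.
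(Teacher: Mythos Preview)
The paper does not prove this theorem; it is quoted, without argument, from \cite{kahn2012classes} and \cite{colliot2013cycles} (and, over an arbitrary algebraically closed field, from \cite{ctvoisin}). Your sketch is in the spirit of those references: Bloch--Ogus plus Merkurjev--Suslin (weight-$2$ Beilinson--Lichtenbaum) produces the finite-level exact sequence, and the passage to limits together with a Bockstein comparison is exactly how the identification is extracted there. So on the level of strategy there is nothing to compare: you are reconstructing the cited proof.

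There is, however, a genuine gap in your treatment of the algebraically closed case. You assert that Deligne's weight argument makes $H^3(X,\Q_\ell/\Z_\ell(2))$ finite and hence $H^3(X,\Q_\ell(2))=0$. This is correct for $k$ finite, but it is \emph{false} for $k$ algebraically closed: any $X$ with $b_3(X)\neq 0$ (for instance an abelian threefold, or the product $C\times S$ with $b_1(S)\neq 0$ that the paper itself considers) has $H^3(X,\Q_\ell(2))\neq 0$. Your fallback---descend to a finite subfield and pass to the colimit---only reaches $k=\overline{\F}$, whereas the statement is asserted for an arbitrary algebraically closed field, including~$\C$. Over such a field the argument in \cite{ctvoisin} and \cite{colliot2013cycles} does \emph{not} use finiteness of $H^3(X,\Q_\ell/\Z_\ell(2))$; instead it uses only that each $H^i(X,\Z_\ell(j))$ is a finitely generated $\Z_\ell$-module, so that $H^i(X,\Q_\ell/\Z_\ell(j))$ is (finite)$\,\oplus\,$(divisible), and then tracks the divisible and codivisible pieces separately through the exact sequence. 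In particular one must check that the image of $H^3(X,\Q_\ell/\Z_\ell(2))$ in $H^3_{\on{nr}}$ is exactly the maximal divisible subgroup, and symmetrically on the motivic/cycle side that the divisible part of $H^4_{\mathcal M}(X,\Q_\ell/\Z_\ell(2))$ comes from $CH^2(X)\otimes\Q_\ell/\Z_\ell$; this is where the cited references do real work, and your Bockstein sketch, as written, leans on a vanishing that is simply unavailable.
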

	
	En dimension 3, la combinaison de ce r\'esultat et des corollaires du th\'eor\`eme de Schoen donne :

	\begin{cor}\label{dim3finialgclos}\cite[Prop. 4.2, Prop. 3.2]{colliot2013cycles}
		Soient $k$ une cl\^oture alg\'ebrique
		d'un corps fini de caract\'eristique $p$ et $X$ une $k$-vari\'et\'e projective et lisse, g\'eom\'e\-tri\-quement
		connexe, de dimension $3$. 
		Soit $\ell$ un premier distinct de $p$.
		Supposons vraie la conjecture de Tate pour
		les surfaces sur les corps finis.  Alors :
		
		(i) Le groupe $H^3_{\on{nr}}(k(X),\Q_{\ell}/\Z_{\ell}(2))$
		est un groupe divisible. 	 
		
		(ii) Si de plus il existe une surface projective et lisse $S/k$,  et un $k$-morphisme $S \to X$
		tel que pour tout corps alg\'ebriquement clos $\Omega$ contenant $k$
		l'application induite $CH_{0}(S_{\Omega}) \to CH_{0}(X_{\Omega})$ est surjective,
		alors $H^3_{\on{nr}}(k(X),\Q_{\ell}/\Z_{\ell}(2))=0$.
	\end{cor}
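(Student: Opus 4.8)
The plan is to obtain both parts by combining Theorem~\ref{KCTK} with the consequences of Schoen's theorem recorded in Corollary~\ref{corschoen} and with the finiteness statements of \cite{colliot2013cycles}; every analytically substantial step has already been packaged in those results, so what remains here is essentially bookkeeping.

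For~(i), I would apply Theorem~\ref{KCTK} to the solid $X$ over the algebraically closed field $k$: the quotient of $H^3_{\on{nr}}(k(X),\Q_{\ell}/\Z_{\ell}(2))$ by its maximal divisible subgroup is finite and isomorphic to the torsion subgroup of the finitely generated $\Z_{\ell}$-module $\on{Coker}[CH^2(X)\otimes\Z_{\ell}\to H^4(X,\Z_{\ell}(2))]$. Since $\dim X=3$, this cokernel is precisely that of the $1$-cycle class map $CH^{d-1}(X)\otimes\Z_{\ell}\to H^{2d-2}(X,\Z_{\ell}(d-1))$ with $d=3$, so Corollary~\ref{corschoen}(i) — which rests on Schoen's Theorem~\ref{schoen}, hence on the Tate conjecture for surfaces over finite fields, together with Lemma~\ref{CTK41} — shows it to be torsion-free. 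Its torsion subgroup therefore vanishes, so $H^3_{\on{nr}}(k(X),\Q_{\ell}/\Z_{\ell}(2))$ coincides with its maximal divisible subgroup, i.e.\ is divisible.

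For~(ii), the additional hypothesis — a smooth projective surface $S/k$ with a $k$-morphism $S\to X$ whose induced map $CH_0(S_{\Omega})\to CH_0(X_{\Omega})$ is surjective for every algebraically closed $\Omega\supseteq k$ — places us in the setting of \cite[Prop.~3.2]{colliot2013cycles}, which yields that $H^3_{\on{nr}}(k(X),\Q_{\ell}/\Z_{\ell}(2))$ is finite. (One could instead note, via \cite[Prop.~3.23]{colliot2013cycles}, that $\on{Coker}[CH^2(X)\otimes\Z_{\ell}\to H^4(X,\Z_{\ell}(2))]$ is finite, whence it is zero by the torsion-freeness established in~(i); but finiteness of the cokernel alone only recovers divisibility of $H^3_{\on{nr}}$, so the input from \cite[Prop.~3.2]{colliot2013cycles} is the one that matters.) A finite divisible abelian group is trivial, so part~(i) forces $H^3_{\on{nr}}(k(X),\Q_{\ell}/\Z_{\ell}(2))=0$.

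The main obstacle has thus been entirely relegated upstream: it lies in Schoen's theorem (through Corollary~\ref{corschoen}(i)) and in the finiteness results of \cite{colliot2013cycles}. The only point deserving care in the write-up is to confirm that \cite[Prop.~3.2]{colliot2013cycles} indeed applies when $CH_0$ is controlled by a codimension-one subvariety (here a surface inside a threefold), which it does; once that is granted, the two assertions follow by combining ``divisible'' with ``finite''.
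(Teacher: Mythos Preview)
Your proposal is correct and follows essentially the same route as the paper's proof: Theorem~\ref{KCTK} together with Corollary~\ref{corschoen}(i) gives divisibility for~(i), and then \cite[Prop.~3.2]{colliot2013cycles} combined with~(i) gives the vanishing in~(ii). The only cosmetic difference is that the paper phrases the output of \cite[Prop.~3.2]{colliot2013cycles} as ``annihilated by some integer $N>0$'' rather than ``finite'', but either statement, paired with divisibility, forces the group to vanish.
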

	
	\begin{proof}  D'apr\`es le th\'eor\`eme \ref{KCTK},  le quotient 
		de $H^3_{\on{nr}}(k(X),\Q_{\ell}/\Z_{\ell}(2))$ par son sous-groupe divisible maximal
		s'identifie au sous-groupe de  torsion du conoyau de
		\[CH^{2}(X)\otimes  \Z_{\ell} \to H^4(X, \Z_{\ell}(2)).\]
		Mais sous l'hypoth\`ese sur la conjecture de Tate, le corollaire \ref{corschoen}
		assure que ce conoyau n'a pas de torsion. Ceci \'etablit (i).
		
		Dans la situation de (ii), l'hypoth\`ese sur les groupes de Chow de dimension z\'ero
		et un argument de correspondances bien connu  \cite[Prop. 3.2]{colliot2013cycles}
		implique que le groupe $H^3_{\on{nr}}(k(X),\Q_{\ell}/\Z_{\ell}(2))$ est annul\'e par
		un entier $N>0$. Comme le groupe  $H^3_{\on{nr}}(k(X),\Q_{\ell}/\Z_{\ell}(2))$ est divisible, il est nul. 
	\end{proof}

	\begin{rmk}	
		Sur le corps $\C$ des complexes,  soit $X=E\times S$ le produit
		d'une courbe elliptique $E$ et d'une surface d'Enriques.
		Pour $s\in S(\C)$ fix\'e, l'inclusion $E \to X$ donn\'ee par $m \mapsto (m,s)$
		satisfait que $CH_{0}(C_{\Omega}) \to CH_{0}(X_{\Omega})$ est surjectif
		pour tout corps alg\'ebriquement clos $\Omega$ contenant $\C$.
		Ceci implique que le groupe $\on{Br}(X)$ est d'exposant fini, et donc est fini.
		Mais il existe de tels couples $(E,S)$ pour lesquels
		le groupe $H^3_{\on{nr}}(\C(X),\Q_{2}/\Z_{2}(2))$ est non nul et   
		la conjecture de Hodge enti\`ere pour les cycles de codimension 2
		n'est pas satisfaite \cite{benoist2018failure, colliot2019cohomologie}. La situation sur 
		le corps des complexes est donc 
		diff\'erente de celle des corollaires (conditionnels) \ref{corschoen} et \ref{dim3finialgclos}. Voir aussi \cite{colliot2010autour}.
	\end{rmk}

	\section{Remerciements}
	Federico Scavia b\'en\'eficie  d'une bourse d'\'etudes de University of British Columbia.
	Il remercie le D\'epartement de Math\'ematiques d'Orsay (Universit\'e Paris Saclay) pour son hospitalit\'e pendant l'automne 2019.
	Cette recherche a \'et\'e rendue possible gr\^ace au financement qui lui a \'et\'e fourni par Mitacs.

\end{document}